\definecolor{teal}{rgb}{0.0, 0.5, 0.5}
\newcounter{mnotecount}[section]
\newcommand{\rmnote}[1]{}
\DeclareFontFamily{U}{mathb}{\hyphenchar\font45}
\DeclareFontShape{U}{mathb}{m}{n}{
      <5> <6> <7> <8> <9> <10> gen * mathb
      <10.95> mathb10 <12> <14.4> <17.28> <20.74> <24.88> mathb12
      }{}
\DeclareSymbolFont{mathb}{U}{mathb}{m}{n}
\let\dot\relax
\DeclareMathAccent{\dot}{0}{mathb}{"39}
\let\ddot\relax
\DeclareMathAccent{\ddot}{0}{mathb}{"3A}
\let\dddot\relax
\DeclareMathAccent{\dddot}{0}{mathb}{"3B}
\let\ddddot\relax
\DeclareMathAccent{\ddddot}{0}{mathb}{"3C}
\theoremstyle{plain}
\newtheorem*{theorem*}{Theorem}
\newtheorem{theorem}{Theorem}[section]
\newtheorem*{lemma*}{Lemma}
\newtheorem{lemma}[theorem]{Lemma}
\newtheorem*{assumption*}{Assumption}
\newtheorem*{proposition*}{Proposition}
\newtheorem{proposition}[theorem]{Proposition}
\newtheorem*{corollary*}{Corollary}
\newtheorem{corollary}[theorem]{Corollary}
\newtheorem*{claim*}{Claim}
\newtheorem*{conjecture*}{Conjecture}
\newtheorem*{question*}{Question}
\newtheorem*{result*}{Result}
\theoremstyle{definition}
\newtheorem*{definition*}{Definition}
\newtheorem*{example*}{Example}
\newtheorem*{algorithm*}{Algorithm}
\newtheorem*{remark*}{Remark}
\newtheorem*{remarks*}{Remarks}
\newtheorem{remark}[theorem]{Remark}
\newtheorem*{convention*}{Convention}
\numberwithin{equation}{section}
\def\al{\alpha}
\def\be{\beta}
\def\ga{\gamma}
\def\de{\delta}
\def\ve{\varepsilon}
\def\ka{\kappa}
\def\la{\lambda}
\def\rh{\rho}
\def\si{\sigma}
\def\ta{\tau}
\def\vh{\varphi}
\def\om{\omega}
\def\La{\Lambda}
\def\Ph{\Phi}
\def\C{\mathbb{C}}
\def\N{\mathbb{N}}
\def\R{\mathbb{R}}
\def\cA{\mathcal{A}}
\def\cD{\mathcal{D}}
\def\cE{\mathcal{E}}
\def\cF{\mathcal{F}}
\def\cH{\mathcal{H}}
\def\cS{\mathcal{S}}
\def\fB{\mathfrak{B}}
\def\fH{\mathfrak{H}}
\def\fK{\mathfrak{K}}
\def\fL{\mathfrak{L}}
\def\fM{\mathfrak{M}}
\def\fN{\mathfrak{N}}
\def\fQ{\mathfrak{Q}}
\def\fS{\mathfrak{S}}
\def\<{\langle}
\def\>{\rangle}
\renewcommand{\o}{\circ}
\def\ul{\underline}
\let\on=\operatorname
\newcommand{\sr}[1]%
{\ifmmode{}^\dagger\else${}^\dagger$\fi\ifvmode
\vbox to 0pt{\vss
 \hbox to 0pt{\hskip\hsize\hskip1em
 \vbox{\hsize3cm\raggedright\pretolerance10000
 \noindent #1\hfill}\hss}\vss}\else
 \vadjust{\vbox to0pt{\vss%
 \hbox to 0pt{\hskip\hsize\hskip1em%
 \vbox{\hsize3cm\raggedright\pretolerance10000%
 \noindent #1\hfill}\hss}\vss}}\fi%
}
\def\RR{\mathbb R}
\def\NN{\mathbb N}
\def\A{\;\forall}
\def\E{\;\exists}
\providecommand{\mapsfrom}{\kern.2em%
\setbox0=\hbox{$\leftarrow$\kern-.10em\rule[0.26mm]{0.1mm}{1.3mm}}\box0%
\kern.3em}
\title[Optimal solutions of the Borel problem]
{On optimal solutions of the Borel problem\\ in the Roumieu case}
\author[D.N.~Nenning, A.~Rainer, and G.~Schindl]{David Nicolas Nenning, Armin Rainer, and Gerhard Schindl}
\address{Fakult\"at f\"ur Mathematik, Universit\"at Wien, Oskar-Morgenstern-Platz~1, A-1090 Wien, Austria.}
\email{david.nicolas.nenning@univie.ac.at}
\email{armin.rainer@univie.ac.at}
\email{gerhard.schindl@univie.ac.at}
\begin{document}

\begin{abstract}
  The Borel problem for Denjoy--Carleman and Braun--Meise--Taylor classes has well-known optimal solutions.
  The unified treatment of these ultradifferentiable classes by means of one-parameter families of weight sequences
  allows to compare these optimal solutions.
  We determine the relations among them and give conditions for their equivalence in the Roumieu case.
\end{abstract}

\thanks{AR was supported by FWF-Project P 32905-N, DNN and GS by FWF-Project P 33417-N}
\keywords{Ultradifferentiable function classes, Borel map, extension results, mixed setting, controlled loss of regularity}
\subjclass[2020]{
26E10, 
46A13, 
46E10,  
46E25  
}
\date{\today}

\maketitle

\section{Introduction}\label{Introduction}

The Borel map $j^\infty : C^\infty(\R) \to \C^\N$ takes a smooth function $f$ to its infinite jet $(f^{(n)}(0))_{n \in \N}$ at $0$.
We will be concerned with the restriction of $j^\infty$ to Denjoy--Carleman classes $\cE^{\{M\}}(\R)$,
Braun--Meise--Taylor classes $\cE^{\{\om\}}(\R)$, and, most generally, classes $\cE^{\{\fM\}}(\R)$,
where $\fM$ is a one-parameter family of weight sequences.
We will only treat the Roumieu case in this paper; the Beurling case will be discussed in a separate paper.
The $j^\infty$-image of any of these ultradifferentiable classes sits in a sequence space
$\La^{\{M\}}$, $\La^{\{\om\}}$, and $\La^{\{\fM\}}$
defined by analogous bounds.
The (mixed) Borel problem asks for conditions for the validity of the inclusion
\begin{equation} \label{eq:problem}
      \La^{\{\text{weight}'\}} \subseteq j^\infty \cE^{\{\text{weight}\}}(\R)
\end{equation}
in any of the above cases, where different weights may appear on the left and on the right.

For Denjoy--Carleman and Braun--Meise--Taylor classes the \emph{optimal} solution to this problem is well-known:
\begin{enumerate}
  \item Let $M$ be a non-quasianalytic weight sequence.
  If $M'$ is a suitable other weight sequence, then $\La^{\{M'\}} \subseteq j^\infty \cE^{\{M\}}(\R)$ is equivalent to
  a condition (namely \eqref{eq:SV}) purely in terms of $M',M$ which we denote by $M' \prec_{SV} M$; see \cite{surjectivity}.
  There is an explicit positive sequence $L=L(M)$ such that $L \prec_{SV} M$, that is
  $\La^{\{L\}} \subseteq j^\infty \cE^{\{M\}}(\R)$, and $L$ is optimal with respect to this property;
  see \cite{maximal}.
  \item The condition $M' \prec_{\ga_1} M$ (see \eqref{eq:ga1}) is generally stronger than $M' \prec_{SV} M$; it plays a crucial role
  in the more general Whitney problem \cite{Bruna80,ChaumatChollet94,Langenbruch94,RainerSchindl16a,Rainer:2021aa}.
  In many important cases the conditions are equivalent.
  There is an explicit weight sequence $S=S(M)$ such that $S \prec_{\ga_1} M$ and $S$ is optimal with respect to this property;
  see \cite{RainerSchindl16a}.
  \item Let $\om$ be a non-quasianalytic weight function.
  If $\om'$ is a suitable other weight function, then $\La^{\{\om'\}} \subseteq j^\infty \cE^{\{\om\}}(\R)$ is equivalent to
  a condition (namely \eqref{eq:omsnq}) purely in terms of $\om',\om$ which we denote by $\om' \prec_{st} \om$.
  There is an explicit weight function $\ka = \ka(\om)$
  such that $\ka \prec_{st} \om$, that is
  $\La^{\{\ka\}} \subseteq j^\infty \cE^{\{\om\}}(\R)$, and $\ka$ is optimal with respect to this property;
  see \cite{BonetMeiseTaylor92}.
  The condition $\om' \prec_{st} \om$ is also crucial in the respective Whitney problem
  \cite{BBMT91,Rainer:2019ac,Rainer:2020aa,Rainer:2020ab,Rainer:2021aa}.
  \item Beside these optimal solutions there is Carleson's solution \cite{Carleson:1961wa} based on a universal moment problem.
  It gives a sequence $Q=Q(M)$ (but is also intimately related with (3)) such that $\La^{\{Q\}} \subseteq j^\infty \cE^{\{M\}}(\R)$.
\end{enumerate}
The assumption that the weight on the right-hand side of \eqref{eq:problem} is non-quasianalytic is no restriction:
it is a necessary consequence of the inclusion \eqref{eq:problem} (for germs at $0$)
if the involved classes strictly contain the real analytic class;
see \cite{RainerSchindl15} and \cite[Section 5]{Rainer:2021aa}.

The use of one-parameter families $\fM$ of weight sequences
admits an efficient unified treatment of Denjoy--Carleman and Braun--Meise--Taylor classes
alike. In fact, for a weight function $\om$ there is a canonical well-behaved family $\fM=\fM_\om$ such that $\La^{\{\om\}} = \La^{\{\fM\}}$
and $\cE^{\{\om\}}(\R) = \cE^{\{\fM\}}(\R)$ as locally convex spaces; cf.\ \cite{RainerSchindl12}.
It also abolishes the borders between the four listed solutions and begs the question about the relationships among them.

In this note we answer this question. Given a suitable family $\fM$ we lift the derived weights $L$, $S$, $\ka$, and $Q$ to the level of families of sequences
$\fL$, $\fS$, $\fK$, and $\fQ$ and clarify the relations among them.
While the construction of $\fL$ and $\fS$ is straightforward,
$\fK$ and $\fQ$ are obtained by a more convoluted procedure involving the interplay between weight sequences $M \in \fM$ and their associated functions $\om_M$,
$\ka_{\om_M}$, and $P_{\om_M}$.
In summary we find, provided that $\fM$ and $\fM'$ satisfy some standard conditions,
\begin{gather}
  \begin{split}
    \label{eq:main1}
    \La^{\{\fS\}} \subseteq \La^{\{\fK\}} = \La^{\{\fQ\}} \subseteq \La^{\{\underline \fL\}} \subseteq \La^{\{\fL\}} \subseteq j^\infty \cE^{\{\fM\}}(\R),
    \\
    \La^{\{\fM'\}} \subseteq j^\infty \cE^{\{\fM\}}(\R)
    \implies \La^{\{\fM'\}} \subseteq \La^{\{\fL\}},
  \end{split}
\end{gather}
underlining the overall optimality of $\fL$ in regard of the mixed Borel problem;
here $\underline \fL$ is obtained from $\fL$ by passing to the log-convex minorants of the sequences in $\fL$.

In the most important Braun--Meise--Taylor case we prove that, for each non-quasianalytic weight function $\om$,
\begin{gather} \label{eq:main2}
    \La^{\{\ka\}} = \La^{\{\fK\}} = \La^{\{\fQ\}} = \La^{\{\ul \fL\}},
\end{gather}
where the families $\fL$, $\fK$, and $\fQ$ are derived from the canonical associated family $\fM_\om$.
So in this case optimality in the mixed Borel problem is achieved by all these different approaches.
In view of the inherent regularity properties of $\ka$ it is not too surprising that we have to pass from $\fL$
which might be quite irregular to  $\ul \fL$.

After recalling background on weights and function and sequence spaces in \Cref{sec:weights}
we review Carleson's solution to the Borel problem in \Cref{sec:Carleson} and adapt it to our setting.
In \Cref{sec:optimal} we prove \eqref{eq:main1} and give sufficient conditions for equality everywhere in \eqref{eq:main1}.
The consequences for the classical cases of Braun--Meise--Taylor and Denjoy--Carleman classes are discussed in \Cref{sec:classical}.
The proof of \eqref{eq:main2} relies on a variant of a result of \cite{BonetMeiseTaylor92}
which we prove in \Cref{appendix}.

\section{Weights} \label{sec:weights}

\subsection{Weight functions}

Let $\omega:[0,\infty)\rightarrow[0,\infty)$ be a continuous increasing function
satisfying $\omega(0)=0$ and $\lim_{t\rightarrow\infty}\omega(t)=\infty$.
We call $\om$ a \emph{pre-weight function} if additionally
\begin{itemize}
  \item $\log(t)=o(\omega(t))$ as $t\rightarrow\infty$,
	\item $\varphi_{\omega}:t\mapsto\omega(e^t)$ is convex.
\end{itemize}
A pre-weight function $\om$ is a \emph{weight function} if it also fulfills
\begin{itemize}
  \item $\omega(2t)=O(\omega(t))$ as $t\rightarrow\infty$.
\end{itemize}

Another important condition is
\begin{equation}
    \label{om6}
    \E H\ge 1 \A t\ge 0 : 2\omega(t)\le\omega(H t)+H.
\end{equation}

Given two pre-weight functions $\om, \si$ we write $\si \preceq \om$ if $\om(t)=O(\si(t))$ as $t \to \infty$
and call $\om$ and $\si$ \emph{equivalent} if $\si \preceq \om$ and $\om \preceq \si$.
Note that $\preceq$ induces a partial order on the set of equivalence classes.
Any equivalence class $[\om]$ contains a pre-weight function $\widetilde \om$ with $\widetilde \om|_{[0,1]}=0$; we say that $\widetilde \om$ is \emph{normalized}.
We will often tacitly assume that this property is satisfied.

Also note that if one representative in $[\om]$ is a weight function then all representatives are weight functions.

A pre-weight function $\om$ is called \emph{non-quasianalytic} if
\[
  \int_0^\infty \frac{\om(t)}{1+t^2}\, dt <\infty
\]
and \emph{quasianalytic} otherwise.
If $\om$ is a non-quasianalytic pre-weight function, then we may consider
\[
  \ka_\om(t) := \int_1^\infty \frac{\om(ts)}{s^2}\, ds = t \int_t^\infty \frac{\om(s)}{s^2}\, ds, \quad t\ge 0,
\]
which turns out to be a concave weight function satisfying $\ka_\om(t)=o(t)$ as $t \to \infty$ and $\om \le \ka_\om$ (which entails $\ka_\om \preceq \om$)
since $\om$ is increasing; it might be quasianalytic.
Note that if $\si$ and $\om$ are equivalent, then so are $\ka_\si$ and $\ka_\om$.

The importance of $\ka_\om$ relies on its \emph{optimality} with respect to the Borel problem \cite{BonetMeiseTaylor92}:
Given that $\om',\om$ are weight functions, $\om$ non-quasianalytic,
$\om'(t) = o(t)$ as $t \to \infty$,
then $\La^{\{\om'\}} \subseteq j^\infty \cE^{\{\om\}}(\R)$
if and only if $\om' \preceq \ka_\om$, i.e.,
\begin{equation} \label{eq:omsnq}
   \E C>0 \A t\ge 0 : \int_1^\infty \frac{\om(ts)}{s^2}\, ds \le C\om'(t) + C;
\end{equation}
we will write $\om' \prec_{st} \om$ for \eqref{eq:omsnq}.
This induces a relation on the equivalence classes of pre-weight functions which is antisymmetric,
since $\om' \prec_{st} \om$ implies $\om' \preceq \om$, and transitive (indeed,
$\om_1 \prec_{st} \om_0$ and $\om_2 \prec_{st} \om_1$ yield
$\om_2 \preceq \ka_{\om_1} \preceq \om_1 \preceq \ka_{\om_0}$ so that $\om_2 \prec_{st} \om_0$).
The pre-weight functions $\om$ that satisfy $\om \prec_{st} \om$, that is $\La^{\{\om\}} = j^\infty \cE^{\{\om\}}(\R)$,
are often called \emph{strong} weight functions; being equivalent to $k_\om$ they actually are weight functions.

For any pre-weight function $\om$ we consider the \emph{Young conjugate} of $\varphi_{\omega}$,
\begin{equation*}
\varphi^{*}_{\omega}(x):=\sup\{x y-\varphi_{\omega}(y): y\ge 0\}, \quad x\ge 0.
\end{equation*}
It is convex, increasing, and satisfies $(\vh_\om^*)^* = \vh_\om$, $\vh_\om^*(t)/t \nearrow \infty$ as $t \to \infty$,
and $\vh_\om^*(0)=0$ (provided that $\om$ is normalized).

\subsection{Weight sequences}

A positive sequence $M = (M_k)_{k \ge 0}$
is called a \emph{weight sequence} if $M_k = \mu_0 \mu_1 \cdots \mu_k$, where
$1 = \mu_0 \le \mu_1 \le \cdots \le \mu_{k-1} \le \mu_k \nearrow \infty$.
That $\mu$ is increasing amounts to log-convexity of $M$.
We have $M_k^{1/k} \nearrow \infty$.
If even $\mu_k/k$ is increasing, we say that $M$ is \emph{strongly log-convex}.

For two positive sequences $M,N$ we write $M \preceq N$ if
\[
  \sup_{k\ge 1}\Big(\frac{M_k}{N_k}\Big)^{1/k}<\infty,
\]
and we call $M$ and $N$ \emph{equivalent}, if
$M \preceq N$ and $N \preceq M$.
The relation $M \preceq N$ induces a partial order on the set of equivalence classes.

Note that if $M$ is log-convex and $\mu_k \nearrow \infty$ then the equivalence class $[M]$
contains a weight sequence $\widetilde M$. That means $\widetilde M$ also satisfies $1= \widetilde M_0 \le \widetilde M_1$.

A weight sequence $M$ is called \emph{non-quasianalytic,} if
\begin{equation*}
	\sum_{k=1}^{\infty}\frac{1}{\mu_k}<\infty
\end{equation*}
and \emph{quasianalytic} otherwise.
We say that $M$ has \emph{moderate growth} if
\begin{equation*}
	\exists C\ge 1 \;\forall j,k\in\NN : M_{j+k}\le C^{j+k} M_j M_k.
\end{equation*}

\subsection{Associated function}

With a positive sequence $M$ satisfying $M_k^{1/k}\to \infty$
we associate (cf.\ \cite[Chapitre I]{mandelbrojtbook} and \cite[Definition 3.1]{Komatsu73})
the function $\omega_M: [0,\infty) \to [0,\infty)$ defined by
\begin{equation*}\label{assofunc}
\omega_M(t):= \sup_{k\in\NN}\log\Big(\frac{t^k M_0}{M_k}\Big), \quad \text{ for } t>0, \quad \omega_M(0):=0.
\end{equation*}

\begin{lemma}[{Cf.\ \cite[Lemma 2.4]{sectorialextensions} and \cite[Lemma 3.1]{sectorialextensions1}}]\label{assofuncproper}
For a weight sequence $M$:
\begin{enumerate}
  \item $\omega_M$ is a pre-weight function.
  \item $M$ has moderate growth if and only if $\om_M$ satisfies \eqref{om6}.
  \item $(M_k/k!)^{1/k}\to \infty$ if and only if $\om_M(t) = o(t)$ as $t \to \infty$.
\end{enumerate}
\end{lemma}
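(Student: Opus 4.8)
The plan is to establish each of the three equivalences by connecting the defining quantities of the associated function $\omega_M$ directly to the sequence $M$ and its quotients $\mu_k$. The key technical tool throughout is the standard inversion relating $\omega_M$ and the Young-conjugate description: one has the formula $M_k = M_0 \exp(\varphi_{\omega_M}^*(k))$, or equivalently $\log(M_k/M_0) = \sup_{t\ge 0}(kt - \omega_M(e^t))$, which makes the duality between the growth of $M_k^{1/k}$ and the behavior of $\omega_M$ precise. I would begin by recording this inversion as a preliminary observation, since all three parts flow from it.

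For part (1), the claim is that $\omega_M$ is a pre-weight function. First I would check the basic structural properties: $\omega_M$ is clearly nonnegative, and because $M_k^{1/k}\to\infty$ each of the functions $t\mapsto \log(t^kM_0/M_k)$ is eventually dominated so the supremum is finite and defines a continuous increasing function with $\omega_M(0)=0$ and $\omega_M(t)\to\infty$. The condition $\log t = o(\omega_M(t))$ follows because $\omega_M(t)\ge \log(t^kM_0/M_k)$ for every fixed $k$, giving $\omega_M(t)\ge k\log t - \log(M_k/M_0)$, so dividing by $\log t$ and sending $t\to\infty$ yields a lower bound of $k$ for arbitrary $k$. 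Convexity of $\varphi_{\omega_M}(s)=\omega_M(e^s)=\sup_k(ks-\log(M_k/M_0))$ is immediate, as it is a supremum of affine functions of $s$.

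For part (3), I would use the inversion formula directly: $\omega_M(t)=o(t)$ as $t\to\infty$ is equivalent to the statement that the affine functions $ks - \log(M_k/M_0)$ in $\varphi_{\omega_M}$ have slopes that do not grow fast enough to force linear growth in $s$, which translates, through the conjugacy with $M_k=M_0\exp(\varphi^*(k))$, into a condition on how fast $\log(M_k/k!)$ grows. Concretely, applying the conjugate relation to $k!$ (whose associated quantity behaves like the identity $t$) and comparing, one sees that subtracting the linear term $t$ inside the supremum corresponds to replacing $M_k$ by $M_k/k!$, so $\omega_M(t)-\varepsilon t$ staying bounded above for all $\varepsilon$ is equivalent to $(M_k/k!)^{1/k}\to\infty$. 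I expect the cleanest route is to cite Stirling and work with the explicit estimate comparing $\omega_M$ and $\omega_{M/k!}$, or equivalently to use the known fact $\omega_{k!}(t)\sim t$.

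The main obstacle is part (2), the characterization of moderate growth via condition \eqref{om6}. Here the subadditivity-type inequality $M_{j+k}\le C^{j+k}M_jM_k$ on the sequence side must be matched with the inequality $2\omega(t)\le \omega(Ht)+H$ on the function side, and the passage between them is the most delicate because it requires controlling the supremum defining $\omega_M$ uniformly in both the doubling of the index ($2k$ against $k$) and the doubling of the argument ($Ht$ against $t$). The plan is to prove both implications by translating through the inversion formula: moderate growth says $\log M_{2k}\le \log M_k + \log M_k + 2k\log C$, and taking the Young conjugate converts the index-doubling on the left into the argument-scaling $\omega_M(Ht)$ on the right, with the additive constant $H$ absorbing the $\log C$ term. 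I anticipate that making this conjugation rigorous in both directions — rather than merely heuristically — is where the real work lies, and I would likely lean on the cited results of \cite{sectorialextensions, sectorialextensions1} for the precise computation, presenting here only the strategy of dualizing the defining inequalities.
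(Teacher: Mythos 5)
The paper gives no proof of \Cref{assofuncproper} at all --- it is quoted from \cite[Lemma 2.4]{sectorialextensions} and \cite[Lemma 3.1]{sectorialextensions1} --- so the comparison can only be with the standard arguments in those references, and your outline is essentially that argument: the Legendre--Young inversion $\log(M_k/M_0)=\varphi_{\omega_M}^{*}(k)$ (valid here because a weight sequence is log-convex with $\mu_1\ge 1$, so $\omega_M$ vanishes on $[0,1]$ and the supremum over $t\ge 0$ agrees with that over $t\ge 1$), the sup-of-affine-functions reasoning for (1), comparison with $\omega_{(k!)}(t)\sim t$ for (3), and dualizing $M_{2k}\le C^{2k}M_k^2$ against $2\omega_M(t)\le\omega_M(Ht)+H$ for (2). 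The route is correct and standard; the only reservation is that your parts (2) and (3) are left at the level of strategy rather than executed estimates, which, to be fair, matches the paper's own level of detail (a bare citation).
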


The \emph{log-convex minorant} $\ul M$ of a positive sequence $M$ satisfying $M_k^{1/k}\to \infty$ is given by
\begin{equation} \label{eq:minorant}
  \ul M_k := M_0 \sup_{t \ge 0} \frac{t^k}{e^{\om_M(t)}},\quad  k \in \N.
\end{equation}
We have $\ul M_k^{1/k}\to \infty$ and
if $L \le M$ is log-convex then $L \le \ul M \le M$.

\subsection{Weight matrices}

Cf.\ \cite[Section 4]{RainerSchindl12}.
A \emph{weight matrix} $\fM =\{M^{(x)}: x\in \RR_{>0}\}$ is a one parameter family of weight sequences $M^{(x)}$ such that $M^{(x)} \le M^{(y)}$ if $x \le y$.
Weight matrices are a convenient technical tool for working with weight functions:

\begin{lemma}[{\cite[Section 5]{RainerSchindl12}}]\label{omegaproperties}
	With every normalized pre-weight function $\om$ one can associate a weight matrix
	$\fM= \fM_\om :=\{M^{(x)}: x>0\}$
	by setting
  \begin{equation} \label{eq:asswm}
    M^{(x)}_k:=\exp\Big(\frac{1}{x}\varphi^{*}_{\omega}(x k)\Big).
  \end{equation}
  $\om$ is non-quasianalytic if and only if some/each $M^{(x)}$ is non-quasianalytic.
  All weight sequences $M^{(x)}$ are equivalent if and only if $\om$ satisfies \eqref{om6}
  which in turn is equivalent to some/each $M^{(x)}$ having moderate growth.

  If $\om$ is even a weight function, then
     \begin{equation}\label{newexpabsorb}
     \A h\ge 1 \E A\ge 1 \A x>0 \E D\ge 1 \A k\in\NN :  h^k M^{(x)}_k \le D M^{(Ax)}_k.
     \end{equation}

     If $\om$ is not normalized, we still define $\fM_\om$ by \eqref{eq:asswm}, but
     $1 = M^{(x)}_0 \le M^{(x)}_1$ might fail.
\end{lemma}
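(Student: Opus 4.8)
The entire lemma reduces to one Young-duality computation, so I would organize the proof around it. First, from the properties of $\varphi^*_\om$ recorded in the excerpt I would verify that each $M^{(x)}$ is a weight sequence: $\log M^{(x)}_k=\tfrac1x\varphi^*_\om(xk)$ is convex in $k$ (log-convexity of $M^{(x)}$); $M^{(x)}_0=e^{\varphi^*_\om(0)/x}=1$ by normalization (and this is exactly what fails in the non-normalized case, since then $\varphi^*_\om(0)=-\om(1)<0$); and $\mu^{(x)}_k\nearrow\infty$ because the slopes of the convex function $\varphi^*_\om$ tend to $\infty$, equivalently $\varphi^*_\om(t)/t\nearrow\infty$. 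Monotonicity $M^{(x)}\le M^{(y)}$ for $x\le y$ follows by writing $\tfrac1x\varphi^*_\om(xk)=k\cdot\varphi^*_\om(xk)/(xk)$ and using that $s\mapsto\varphi^*_\om(s)/s$ is increasing (a convex function vanishing at $0$).

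The key step is to compute the associated function of $M^{(x)}$. From
\[
  \om_{M^{(x)}}(t)=\sup_{k\in\N}\Big(k\log t-\tfrac1x\varphi^*_\om(xk)\Big)=\tfrac1x\sup_{s\in x\N}\big(s\log t-\varphi^*_\om(s)\big),
\]
replacing the grid $x\N$ by all of $[0,\infty)$ turns the inner supremum into $(\varphi^*_\om)^*(\log t)=\varphi_\om(\log t)=\om(t)$ by biconjugacy, so $\om_{M^{(x)}}\le\tfrac1x\om$ for free; the reverse inequality holds up to a bounded additive error because $s\mapsto s\log t-\varphi^*_\om(s)$ is concave, so its maximum over the grid differs from its maximum over $[0,\infty)$ by a controllable amount. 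Thus $\om_{M^{(x)}}$ and $\tfrac1x\om$ are equivalent weight functions. The non-quasianalyticity statement is then immediate from the classical criterion $\sum_k 1/\mu^{(x)}_k<\infty\iff\int^\infty\om_{M^{(x)}}(t)/t^2\,dt<\infty$ together with $\om_{M^{(x)}}\approx\tfrac1x\om$, and, as this equivalence holds for every $x$, one gets the ``some/each''.

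For the triple equivalence I would invoke \Cref{assofuncproper}(2): $M^{(x)}$ has moderate growth iff $\om_{M^{(x)}}$ satisfies \eqref{om6}, and since \eqref{om6} is invariant under equivalence and positive rescaling, this happens iff $\om$ satisfies \eqref{om6} (a condition on the fixed $\om$, hence valid for some $x$ iff for each). To link this with ``all $M^{(x)}$ equivalent'' I would pass through the Young conjugate of \eqref{om6}: writing $\psi=\varphi^*_\om$, the inequality $2\varphi_\om(u)\le\varphi_\om(u+\log H)+H$ dualizes to $\psi(2t)\le2\psi(t)+(2\log H)t+H$, which, evaluated at $t=xk$ and divided by $2x$, reads $M^{(2x)}\preceq M^{(x)}$; together with monotonicity and iteration this yields $M^{(x)}\approx M^{(y)}$ for all $x,y$. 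Conversely, if all $M^{(x)}$ are equivalent then $M^{(2)}\preceq M^{(1)}$ gives $\psi(2k)\le2\psi(k)+O(k)$ on the integers; extending to all reals by convexity and taking Young conjugates returns exactly \eqref{om6}.

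Finally, for \eqref{newexpabsorb} in the weight-function case I would again translate to associated functions. Multiplying $M^{(x)}_k$ by $h^k$ replaces $\om_{M^{(x)}}(t)$ by $\om_{M^{(x)}}(t/h)\approx\tfrac1x\om(t/h)$, while $\om_{M^{(Ax)}}\approx\tfrac1{Ax}\om$, so the sought domination $h^k M^{(x)}_k\le D\,M^{(Ax)}_k$ amounts, after dualizing, to $\om(t)\le A\,\om(t/h)+O(1)$; the defining axiom $\om(2t)=O(\om(t))$ for weight functions, iterated $\lceil\log_2 h\rceil$ times, furnishes exactly such a bound with $A=A(h)$ depending only on $h$, while the additive slack is absorbed into $D=D(x)$, matching the quantifier pattern. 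The main obstacle throughout is the discretization: every equivalence above conceals a passage between the continuous Legendre transform (supremum over $s\ge0$) and the sequence or associated function (supremum over $k\in\N$, i.e.\ over the grid $x\N$). One must check that these passages introduce only additive errors and, for \eqref{newexpabsorb}, that the multiplicative constant $A$ can be kept independent of $x$; the convexity of $\varphi^*_\om$ and of $\varphi_\om$ is precisely what keeps these errors under control, but tracking the quantifier structure honestly through the conversions is the delicate part.
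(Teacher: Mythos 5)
The paper states this lemma only as a citation of \cite[Section 5]{RainerSchindl12} and gives no proof, and your Young-duality argument (convexity of $\varphi^*_\om$ for log-convexity, $\om_{M^{(x)}}\sim\tfrac1x\om$ via biconjugacy, dualizing \eqref{om6} to compare $M^{(x)}$ and $M^{(2x)}$, and dualizing $\om(ht)\le A\om(t)+O(1)$ for \eqref{newexpabsorb}) is exactly the standard route taken there, so the approach matches and is correct. Two small points deserve tightening when written out: the grid-versus-continuum error in comparing $\sup_{s\in x\N}\bigl(s\log t-\varphi^*_\om(s)\bigr)$ with $\varphi^{**}_\om(\log t)=\om(t)$ is of size $O(x\log t)$ rather than bounded (harmless, since $\log t=o(\om(t))$ and equivalence of pre-weight functions is a multiplicative $O$-condition), and the extension of $\varphi^*_\om(2k)\le 2\varphi^*_\om(k)+O(k)$ from integers to all real arguments before conjugating back is cleanest via the monotonicity of $s\mapsto\varphi^*_\om(s)/s$ rather than bare convexity, since the increments of $\varphi^*_\om$ themselves need not be $O(k)$.
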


A weight matrix $\fM$ is called \emph{non-quasianalytic} if all $M \in \fM$ are non-quasianalytic.

Let $\fM$ and $\fN$ be weight matrices. We write $\fM \{\preceq\} \fN$ if
\[
    \A M \in \fM \E N \in \fN : M \preceq N
\]
and say that $\fM$ and $\fN$ are \emph{R-equivalent} if $\fM \{\preceq\} \fN$ and $\fN \{\preceq\} \fM$.
Note that if there exists a non-quasianalytic $M \in \fM$, then there is a non-quasianalytic weight matrix
R-equivalent to $\fM$.

A weight matrix $\fM$ is said to have \emph{R-moderate growth} if
\begin{equation*}
   \A M \in \fM \E N \in \fM \E C\ge 1 \A j,k \in \N : M_{j+k} \le C^{j+k} N_j N_k.
\end{equation*}
Note that $\fM_\om$ has R-moderate growth; see \cite{RainerSchindl12}.

We remark that the prefix ``R-" and the brackets ``$\{\cdot\}$'' indicate that the notions are tied to the Roumieu case;
they have Beurling-relatives which will not be discussed in this paper.

\subsection{Function and sequence spaces}

Let $M$ be a positive sequence.
For $\si > 0$ and $n \in \N$ we define the Banach space
\[
  \cE^M_\si([-n,n])
  :=\Big\{ f \in C^\infty([-n,n]):
  \sup_{x \in [-n,n],\, k \in \N}  \frac{|f^{(k)} (x)|}{\si^{k}M_{k}}<\infty \Big\}
\]
and the \emph{Denjoy--Carleman classes of Roumieu type}
\[
\cE^{\{M\}}(\R) :=  \on{proj}_{n \in \N} \on{ind}_{\si  \in \N} \cE^{M}_\si([-n,n]).
\]
Let $\fM$ be a directed family of positive sequences.
We set
\begin{align*}
  \cE^{\{\fM\}}(\R) := \on{proj}_{n \in  \N} \on{ind}_{\si  \in \N} \on{ind}_{M \in \fM} \cE^{M}_\si([-n,n])
\end{align*}
Let $\om$ be a normalized pre-weight function. We define
\[
\cE^{\om}_\si([-n,n]):= \Big\{f \in C^\infty([-n,n]):
\sup_{x \in [-n,n],\, k \in \N} \frac{|f^{(k)}(x)|}{e^{\frac{1}{\si} \vh_\om^* (\si k)}}<\infty \Big\}
\]
and the \emph{Braun--Meise--Taylor class of Roumieu type}
\[
\cE^{\{\om\}}(\R):= \on{proj}_{n \in \N} \on{ind}_{\si \in \N} \cE^\om_\si([-n,n]).
\]
The corresponding sequence spaces are defined as follows:
\begin{gather*}
  \La^M_\si
  :=\Big\{ a \in \C^\N :
  \sup_{k \in \N}  \frac{|a_k|}{\si^{k}M_{k}}<\infty \Big\},
  \quad \La^{\om}_\si := \Big\{ a \in \C^\N :
  \sup_{k \in \N} \frac{|a_k|}{e^{\frac{1}{\si} \vh_\om^* (\si k)}}<\infty \Big\}
  \\
  \La^{\{M\}} := \on{ind}_{\si \in \N} \La^M_\si, \quad
  \La^{\{\fM\}} := \on{ind}_{M \in \fM} \La^{\{M\}}, \quad \La^{\{\om\}} := \on{ind}_{\si \in \N} \La^\om_\si.
\end{gather*}
By \Cref{omegaproperties} (and \cite{RainerSchindl12}), we always have $\cE^{\{\om\}}(\R) \subseteq \cE^{\{\fM_\om\}}(\R)$ and
$\La^{\{\om\}} \subseteq \La^{\{\fM_\om\}}$ and
topological isomorphisms $\cE^{\{\om\}}(\R) = \cE^{\{\fM_\om\}}(\R)$
and $\La^{\{\om\}} = \La^{\{\fM_\om\}}$ if $\om$ is even a weight function.

The order relations $\preceq$ and $\{\preceq\}$ on the weights reflect inclusion relations of the corresponding function and sequence spaces.
In particular, for positive sequences $M',M$ we have
\begin{align*}
  M' \preceq M \quad\Longleftrightarrow&\quad \La^{\{M'\}} \subseteq \La^{\{M\}},
  \\
  M' \preceq M \quad\Longrightarrow&\quad \cE^{\{M'\}}(\R) \subseteq \cE^{\{M\}}(\R),
  \intertext{and provided that $M'$ is a weight sequence}
  M' \preceq M \quad\Longleftarrow\,&\quad \cE^{\{M'\}}(\R) \subseteq \cE^{\{M\}}(\R).
\end{align*}
So equivalent weights determine the same function and sequence space.
For details we refer to \cite{RainerSchindl12} and \cite{Rainer:2021aa}.

\section{On Carleson's solution of the Borel problem} \label{sec:Carleson}

\subsection{A moment problem} \label{sec:moment}
Let $\om$ be a non-quasianalytic pre-weight function which we extend to $\R$ by setting $\om(t) := \om(|t|)$.
(It is easy to check that $W := \exp \o \om$ satisfies the assumptions in \cite{Carleson:1961wa}.)

The harmonic extension
\[
  P_\om(x+iy) := \begin{cases}
  \frac{|y|}{\pi} \int_{-\infty}^\infty \frac{\om(t)}{(x-t)^2+y^2}\, dt & \text{ if } y\ne 0
  \\
  \om(x) & \text{ if } y = 0
\end{cases}
\]
is continuous on $\C$ and
harmonic in the upper and lower halfplane.
By \cite[(2.3)]{Carleson:1961wa} (see also \cite[Lemma 3.3]{BonetMeiseTaylor92}),
\begin{align*}
  \max_{x^2 + y^2 \le r^2} P_\om(x+iy) &\le P_\om(i r) = \frac{r}{\pi} \int_{-\infty}^\infty \frac{\om(t)}{t^2+r^2}\, dt
  = \frac{2}{\pi} \int_0^\infty \frac{\om(rs)}{1+s^2}\, ds
  \\
  &\le  \frac{2}{\pi} \Big( \om(r) +  \int_1^\infty \frac{\om(rs)}{s^2}\, ds \Big)
  = \frac{2}{\pi} (\om(r) + \ka_\om(r)) \le \frac{4}{\pi} \ka_\om(r).
\end{align*}
On the other hand,
\begin{align*}
   \ka_\om(r) = \int_1^\infty \frac{\om(rs)}{s^2}\, ds
   \le 2 \int_1^\infty \frac{\om(rs)}{1+s^2}\, ds
   \le 2 \int_0^\infty \frac{\om(rs)}{1+s^2}\, ds
   =\pi P_\om(ir).
\end{align*}
We see that
\begin{equation} \label{lem:BMT}
   P_\om(i r) \le \frac{4}{\pi} \ka_\om(r) \le 4  P_\om(i r), \quad r>0.
\end{equation}
We define
\begin{equation} \label{eq:Q}
  Q_k := \sup_{r>0} \frac{r^{k+\frac{1}{2}}}{e^{\frac{1}{2}P_\om(ir)}}, \quad  k \in \N.
\end{equation}

Let $\cF_\om$ be the space of measurable functions $f : \R \to \R$ such that
\[
  \|f\|_\om^2 := \int_{-\infty}^\infty |f(t)|^2 e^{-\om(t)}\, dt < \infty.
\]
Fix a positive real sequence $\la = (\la_k)_{k \in \N}$ and let $\cS_\la$ be the space of sequences $s = (s_k)_{k \in \N}$
such that
\[
  \|s\|_\la^2 := \sum_{k=0}^\infty \frac{|s_k|^2}{\la_k^2} < \infty.
\]

\begin{theorem}[{\cite[Theorem 1]{Carleson:1961wa}}] \label{thm:Carleson1}
  For each $s \in \cS_\la$ there exists $f \in \cF_\om$ with
  \[
    L_k(f) := \int_{-\infty}^\infty f(t)  t^k e^{-\om(t)} \, dt = s_k, \quad k \in \N,
  \]
  provided that
  \begin{equation*}
     \sum_{k=0}^\infty \Big(\frac{\la_k}{Q_k}\Big)^2 < \infty.
  \end{equation*}
\end{theorem}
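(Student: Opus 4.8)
The plan is to recast the moment problem as a surjectivity statement in the Hilbert space $\cF_\om$, equipped with the inner product $\langle f,g\rangle_\om := \int_{-\infty}^\infty f(t)g(t)e^{-\om(t)}\,dt$. Since $\log t = o(\om(t))$, the weight $e^{-\om}$ decays faster than any power, so every monomial $e_k(t):=t^k$ lies in $\cF_\om$ and $L_k(f)=\langle f,e_k\rangle_\om$. (We regard $\cF_\om$ as a real Hilbert space and take $s$ real; complex data are handled by splitting into real and imaginary parts.) Solving $L_k(f)=s_k$ for all $k$ is then equivalent to representing, via Riesz, the linear functional $\ell$ defined on the space $\mathcal P$ of polynomials by $\ell\big(\sum_k a_k e_k\big):=\sum_k a_k s_k$: indeed $\ell(P)=\langle f,P\rangle_\om$ for all $P\in\mathcal P$ forces $\langle f,e_k\rangle_\om=\ell(e_k)=s_k$, and conversely. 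Hence it suffices to show $\ell$ is bounded on $(\mathcal P,\|\cdot\|_\om)$; the solution is then the Riesz representative in the closure $\overline{\mathcal P}\subseteq\cF_\om$, with $\|f\|_\om$ controlled by $\|s\|_\la$.

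By the weighted Cauchy--Schwarz inequality, $|\ell(P)| = \big|\sum_k (a_k\la_k)(s_k/\la_k)\big| \le \|s\|_\la\big(\sum_k a_k^2\la_k^2\big)^{1/2}$, so boundedness of $\ell$ reduces to the key estimate
\begin{equation*}
  \sum_k a_k^2\la_k^2 \le C^2\int_{-\infty}^\infty\Big|\sum_k a_k t^k\Big|^2 e^{-\om(t)}\,dt = C^2\|P\|_\om^2
\end{equation*}
for all polynomials $P=\sum_k a_k t^k$, with $C$ independent of $P$. I would deduce this from the uniform coefficient bound
\begin{equation*}
  |a_k|\,Q_k \le C'\,\|P\|_\om \qquad (k\in\N),
\end{equation*}
since then $\sum_k a_k^2\la_k^2 = \sum_k (a_kQ_k)^2(\la_k/Q_k)^2 \le \big(\sup_k a_k^2Q_k^2\big)\sum_k(\la_k/Q_k)^2 \le C'^2\|P\|_\om^2\sum_k(\la_k/Q_k)^2$, which is finite exactly by the hypothesis $\sum_k(\la_k/Q_k)^2<\infty$. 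Thus the whole theorem hinges on the coefficient bound, and this is where the definition \eqref{eq:Q} of $Q_k$ and complex analysis enter.

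To prove $|a_k|\,Q_k\le C'\|P\|_\om$ I would pass to the upper half-plane. Let $G$ be the outer function there with boundary modulus $|G(t)|=e^{-\om(t)/2}$; then $\log|G|$ is the harmonic extension of $-\om/2$, so $|G(z)|=e^{-\frac12 P_\om(z)}$ with $P_\om$ the harmonic extension, and $g:=P\cdot G$ is analytic with $|g(t)|^2=|P(t)|^2e^{-\om(t)}$ on the boundary, whence $g$ lies in $H^2$ of the upper half-plane with $\|g\|_{H^2}=\|P\|_\om$. The reproducing-kernel bound for $H^2$ gives $|g(z)|\le C\,(\Im z)^{-1/2}\|g\|_{H^2}$, i.e.
\begin{equation*}
  |P(z)| \le C\,(\Im z)^{-1/2}\,e^{\frac12 P_\om(z)}\,\|P\|_\om, \qquad \Im z>0,
\end{equation*}
and, by reflection (using that $\om$ is even), the same holds for $\Im z<0$. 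Writing $a_k=\frac{1}{2\pi i}\oint_{|z|=\rho}P(z)z^{-k-1}\,dz$, parametrizing $z=\rho e^{i\theta}$ so that $\Im z=\rho\sin\theta$, and using the maximum estimate $P_\om(\rho e^{i\theta})\le P_\om(i\rho)$ proved above together with the finiteness of $\int_0^{2\pi}|\sin\theta|^{-1/2}\,d\theta$, I obtain
\begin{equation*}
  |a_k| \le C'\,\|P\|_\om\,\frac{e^{\frac12 P_\om(i\rho)}}{\rho^{k+1/2}} \qquad\text{for every }\rho>0.
\end{equation*}
Taking the infimum over $\rho$ turns the right-hand side into $C'\|P\|_\om/Q_k$ by \eqref{eq:Q}, which is precisely the desired coefficient bound.

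The genuinely delicate step is the construction of $G$ and the verification that $g=PG\in H^2$ with $\|g\|_{H^2}=\|P\|_\om$: this needs the admissibility of $W=e^{\om}$ in the sense of \cite{Carleson:1961wa}, noted above, to guarantee that $G$ exists, is zero-free, has the stated boundary modulus, and that $PG$ carries no singular inner factor, so that its $H^2$ norm is read off from its boundary values. Everything else is soft, namely the Hilbert-space reduction, the two applications of Cauchy--Schwarz, and the contour estimate. I therefore expect the main obstacle to be purely the Hardy-space bookkeeping around $G$, while the relation \eqref{lem:BMT} between $P_\om(ir)$ and $\ka_\om(r)$ is what ties $Q_k$ back to the weight and is used downstream rather than in this existence argument.
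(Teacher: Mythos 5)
The paper does not prove this theorem at all --- it is quoted verbatim from \cite[Theorem 1]{Carleson:1961wa} --- and your reconstruction is precisely the argument of that reference: the Riesz-representation reduction to the coefficient estimate $|a_k|\,Q_k\le C\|P\|_\om$, proved by multiplying $P$ by the outer function with boundary modulus $e^{-\om/2}$, applying the $H^2$ pointwise bound, and running Cauchy's formula on $|z|=\rho$ together with $\max_{|z|\le \rho}P_\om\le P_\om(i\rho)$ and the integrability of $|\sin\theta|^{-1/2}$. The one step you defer --- that $PG\in H^2$ of each half-plane with $\|PG\|_{H^2}=\|P\|_\om$ --- does follow from the admissibility hypotheses (non-quasianalyticity gives the outer function, and $\log t=o(\om(t))$ forces $|G(z)|=O(|z|^{-N})$ for every $N$, which absorbs the polynomial growth of $P$), so the proof is complete as outlined.
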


\subsection{A solution of the Borel problem} \label{sec:wmK}

Let $M$ be a non-quasianalytic weight sequence and $\om_M$ the associated function.
Then
\[
\widetilde \om_M(t) := \om_M (t) + \log(1+t^2)
\]
is a non-quasianalytic pre-weight function that is equivalent to $\om_M$.

Let $\fM = \{M^{(\al)} : \al >0\}$ be a non-quasianalytic weight matrix.
For $\al > 0$ and $j \in \N$ set
\[
  \ka_\al := \ka_{\widetilde \om_{M^{(\al)}}} \quad \text{ and } \quad K^{(\al)}_j := \exp(\vh^*_{\ka_\al}(j)).
\]
Then $K^{(\al)} = (K^{(\al)}_j)$ is a weight sequence by the properties of the Young conjugate
and $\fK = \fK(\fM) := \{K^{(\al)} : \al >0\}$ is a weight matrix.
(Strictly speaking, we take a normalized representative in the equivalence class of $\ka_\al$ in order to have $1=K^{(\al)}_0 \le K^{(\al)}_1$.)
On the other hand we have the collection $\fQ = \fQ(\fM) :=\{Q^{(\al)} : \al>0\}$, where
$Q^{(\al)}$ is the sequence defined in \eqref{eq:Q} with $P_\om$ replaced by
\[
P_\al :=  P_{\widetilde \om_{M^{(\al)}}}.
\]

\begin{lemma} \label{lem:fKproperties}
Let $\fM$ be a non-quasianalytic weight matrix.
Then $\fK = \fK(\fM)$ is a weight matrix such that $(K^{(\al)}_j/j!)^{1/j}\to \infty$ and $K^{(\al)}_j/M^{(\al)}_j$ is bounded for all $\al>0$.
\end{lemma}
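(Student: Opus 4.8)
The plan is to read all three assertions off a single duality: for a normalized log-convex weight sequence $M$ one has $M_j=\exp(\vh^*_{\om_M}(j))$, which is just \eqref{eq:minorant} combined with $\ul M=M$ for log-convex $M$. Applying this to the sequences $M^{(\al)}$ and comparing the weight functions $\ka_\al$ and $\om_{M^{(\al)}}$ will give everything. First, that each $K^{(\al)}$ is a weight sequence follows from the properties of the Young conjugate: since $\log K^{(\al)}_j=\vh^*_{\ka_\al}(j)$, convexity of $\vh^*_{\ka_\al}$ gives log-convexity of $K^{(\al)}$, the relation $\vh^*_{\ka_\al}(x)/x\nearrow\infty$ gives $\mu_j\nearrow\infty$, and normalization of $\ka_\al$ gives $1=\mu_0\le\mu_1$. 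For the monotone dependence on the parameter I would argue: if $\al\le\beta$ then $M^{(\al)}\le M^{(\beta)}$, hence $\om_{M^{(\al)}}\ge\om_{M^{(\beta)}}$ (a larger sequence has a smaller associated function), hence $\widetilde\om_{M^{(\al)}}\ge\widetilde\om_{M^{(\beta)}}$ and $\ka_\al\ge\ka_\beta$ by monotonicity of $\om\mapsto\ka_\om$; since passing to Young conjugates reverses the order, $\vh^*_{\ka_\al}\le\vh^*_{\ka_\beta}$, i.e.\ $K^{(\al)}\le K^{(\beta)}$ up to the normalization constants. Thus $\fK$ is a weight matrix.

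To prove $(K^{(\al)}_j/j!)^{1/j}\to\infty$ I would instead verify, via \Cref{assofuncproper}(3), that $\om_{K^{(\al)}}(t)=o(t)$ as $t\to\infty$. Writing $\om_{K^{(\al)}}(t)=\sup_{j\in\N}\big(j\log t-\vh^*_{\ka_\al}(j)\big)$ and using that $\ka_\al$ is a normalized pre-weight function, so that $(\vh^*_{\ka_\al})^*=\vh_{\ka_\al}$, the same supremum taken over all reals $y\ge 0$ is $\sup_{y\ge 0}\big(y\log t-\vh^*_{\ka_\al}(y)\big)=\vh_{\ka_\al}(\log t)=\ka_\al(t)$ for $t\ge 1$. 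Enlarging the index set only increases the supremum, so $\om_{K^{(\al)}}(t)\le\ka_\al(t)$; since $\ka_\al(t)=o(t)$ this yields $\om_{K^{(\al)}}(t)=o(t)$, as needed.

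For the bound on $K^{(\al)}_j/M^{(\al)}_j$ I would exploit the chain $\ka_\al=\ka_{\widetilde\om_{M^{(\al)}}}\ge\widetilde\om_{M^{(\al)}}\ge\om_{M^{(\al)}}$, where the first inequality is the general fact $\om\le\ka_\om$ and the second holds because $\widetilde\om_{M^{(\al)}}=\om_{M^{(\al)}}+\log(1+t^2)$. Taking Young conjugates reverses these inequalities to $\vh^*_{\ka_\al}\le\vh^*_{\om_{M^{(\al)}}}$, whence $K^{(\al)}_j=\exp(\vh^*_{\ka_\al}(j))\le\exp(\vh^*_{\om_{M^{(\al)}}}(j))=M^{(\al)}_j$, the last equality being $M^{(\al)}=\ul{M^{(\al)}}$ for the log-convex normalized sequence $M^{(\al)}$. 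So $K^{(\al)}_j/M^{(\al)}_j\le 1$ before normalization.

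I expect the main care points to be, first, the passage between the discrete associated function (a supremum over $j\in\N$) and the continuous biconjugate (a supremum over $y\ge 0$); this is exactly where $(\vh^*_{\ka_\al})^*=\vh_{\ka_\al}$ is invoked and where the hypothesis that $\ka_\al$ is a genuine weight function, hence $\vh_{\ka_\al}$ convex, is used. Second, the bookkeeping around the normalized representative of $\ka_\al$: I anticipate that the clean estimate $K^{(\al)}_j\le M^{(\al)}_j$ degrades only to $K^{(\al)}_j\le C_\al M^{(\al)}_j$, because the normalized representative differs from $\ka_\al$ by a bounded additive quantity, which shifts $\vh^*_{\ka_\al}$ by an additive constant and thus multiplies $K^{(\al)}_j$ by a bounded factor, preserving boundedness of the ratio. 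The real work, then, is not a single hard estimate but making sure these discretization and normalization effects contribute only $O(1)$ additive (respectively, bounded multiplicative) errors.
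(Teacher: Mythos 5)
Your proposal is correct and follows essentially the same route as the paper: the weight-matrix property comes from the standard properties of the Young conjugate, the claim $(K^{(\al)}_j/j!)^{1/j}\to\infty$ comes from $\om_{K^{(\al)}}\le\ka_\al=o(t)$ together with \Cref{assofuncproper}(3), and $K^{(\al)}\le M^{(\al)}$ comes from $\om_{M^{(\al)}}\le\widetilde\om_{M^{(\al)}}\le\ka_\al$ via the sup/conjugate duality \eqref{eq:minorant}. The only cosmetic difference is that you derive the one-sided bound $\om_{K^{(\al)}}\le\ka_\al$ directly from $(\vh^*_{\ka_\al})^*=\vh_{\ka_\al}$, whereas the paper cites the full equivalence of $\om_{K^{(\al)}}$ and $\ka_\al$ from \cite[Lemma 5.7]{RainerSchindl12}; your handling of the discretization and normalization errors as $O(1)$ perturbations is exactly the right bookkeeping.
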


\begin{proof}
   That $\fK$ is a weight matrix is an easy consequence of the definitions.
   Now $\om_{K^{(\al)}}$ is equivalent to $\ka_\al$, by \cite[Lemma 5.7]{RainerSchindl12}, and $\ka_\al(t)=o(t)$ as $t\to \infty$
   which shows $(K^{(\al)}_j/j!)^{1/j}\to \infty$, by \Cref{assofuncproper}.
  Finally, $\om_{M^{(\al)}} \le \widetilde \om_{M^{(\al)}} \le \ka_\al$ implies $K^{(\al)}\le M^{(\al)}$ in view of \eqref{eq:minorant}.
\end{proof}

\begin{lemma} \label{lem:mgK}
  Let $\fM$ be a non-quasianalytic weight matrix of R-moderate growth.
  Then $\fK = \fK(\fM)$ has R-moderate growth
  which is equivalent to
  \begin{equation} \label{eq:asska}
    \A \al>0 \E \be>0 \E H\ge 1 \A t\ge 0 : 2 \ka_\be (t) \le \ka_\al(Ht) +H.
  \end{equation}
\end{lemma}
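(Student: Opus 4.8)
The plan is to prove two things separately: that R-moderate growth of $\fK$ is equivalent to \eqref{eq:asska} (this part is intrinsic to $\fK$ and does not use the hypothesis on $\fM$), and that \eqref{eq:asska} actually holds once $\fM$ has R-moderate growth. For the equivalence I would first record the \emph{matrix version} of \Cref{assofuncproper}(2): for any weight matrix $\fN$, R-moderate growth holds if and only if
\[
  \A \al>0\ \E\be>0\ \E H\ge1\ \A t\ge0: 2\,\om_{N^{(\be)}}(t)\le\om_{N^{(\al)}}(Ht)+H .
\]
This is the mixed analogue of ``moderate growth $\Leftrightarrow$ \eqref{om6}'' and follows from the single-sequence case by the same Legendre--Fenchel duality carried out with two indices: the inequality $N^{(\al)}_{j+k}\le C^{j+k}N^{(\be)}_jN^{(\be)}_k$ and the displayed condition are dual to one another under $\log N_k=\sup_t(k\log t-\om_N(t))$ and \eqref{eq:minorant}. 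Since $\om_{K^{(\al)}}$ is equivalent to $\ka_\al$ (as shown in the proof of \Cref{lem:fKproperties} via \cite[Lemma 5.7]{RainerSchindl12}), applying this criterion to $\fN=\fK$ and substituting the equivalent $\ka_\al$ for $\om_{K^{(\al)}}$ turns the displayed condition into \eqref{eq:asska}, which gives the asserted equivalence.

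It then remains to verify \eqref{eq:asska} from R-moderate growth of $\fM$. By the matrix criterion applied to $\fM$, for each $\al$ there are $\be$ and $H_0$ with $2\,\om_{M^{(\be)}}(t)\le\om_{M^{(\al)}}(H_0t)+H_0$. The key observation is that the operator $\om\mapsto\ka_\om(t)=t\int_t^\infty\om(s)s^{-2}\,ds$ transports such an inequality almost verbatim: multiplying by $t s^{-2}$ and integrating over $[t,\infty)$, the substitution $u=H_0 s$ gives $t\int_t^\infty\om_{M^{(\al)}}(H_0s)s^{-2}\,ds=\ka_{\om_{M^{(\al)}}}(H_0t)$ while $t\int_t^\infty H_0 s^{-2}\,ds=H_0$, so that $2\,\ka_{\om_{M^{(\be)}}}(t)\le\ka_{\om_{M^{(\al)}}}(H_0t)+H_0$. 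The same computation with any constant $c$ in place of $2$ shows that $c\,\om_{M^{(\be)}}(t)\le\om_{M^{(\al)}}(H_0t)+H_0$ implies $c\,\ka_{\om_{M^{(\be)}}}(t)\le\ka_{\om_{M^{(\al)}}}(H_0t)+H_0$, with the additive constant left unchanged.

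Finally I must pass from $\ka_{\om_{M^{(\al)}}}$ to $\ka_\al=\ka_{\widetilde\om_{M^{(\al)}}}$, where $\widetilde\om_{M^{(\al)}}=\om_{M^{(\al)}}+\log(1+t^2)$. Writing $E(t):=t\int_t^\infty\log(1+s^2)s^{-2}\,ds$, which is \emph{independent of} $\al$ and satisfies $E(t)=2\log t+O(1)$, we have $\ka_\al=\ka_{\om_{M^{(\al)}}}+E$ for every $\al$. A single transfer then leaves a spurious term of size $2\log t$ on the right, which is not a constant. To absorb it I would iterate the matrix criterion for $\fM$ once more to gain a factor: composing two steps gives, for each $\al$, an index $\ga$ with $4\,\om_{M^{(\ga)}}(t)\le\om_{M^{(\al)}}(Ht)+H$, hence $4\,\ka_{\om_{M^{(\ga)}}}(t)\le\ka_{\om_{M^{(\al)}}}(Ht)+H$ by the previous paragraph, and therefore $2\,\ka_\ga(t)\le\tfrac12\ka_\al(Ht)+O(\log t)$ after inserting the $E$-terms. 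Since $\ka_\al$ is a pre-weight function we have $\log t=o(\ka_\al(t))$, so the $O(\log t)$ term is dominated by $\tfrac12\ka_\al(Ht)$ for large $t$ and bounded otherwise; choosing $\be:=\ga$ yields $2\ka_\be(t)\le\ka_\al(Ht)+H$, i.e.\ \eqref{eq:asska}, and with it R-moderate growth of $\fK$. The hard part will be the matrix version of \Cref{assofuncproper}(2): it is the duality heart of the argument and must be arranged so that constants and dilation factors track correctly through both the Legendre transform and the operator $\ka$; the auxiliary $\log$-perturbation from $\widetilde\om$ is a secondary nuisance dealt with by the factor-gaining iteration above.
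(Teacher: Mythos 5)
Your proposal is correct and follows essentially the same route as the paper: both rest on the known equivalence between R-moderate growth and the condition $2\om_{N^{(\be)}}(t)\le\om_{N^{(\al)}}(Ht)+H$ (which the paper simply cites from the literature rather than re-deriving by duality), the transfer of that inequality through the operator $\om\mapsto\ka_\om$, an iteration of the condition to absorb the logarithmic discrepancy coming from $\widetilde\om_{M^{(\al)}}=\om_{M^{(\al)}}+\log(1+t^2)$, and the equivalence of $\om_{K^{(\al)}}$ with $\ka_\al$. The only (immaterial) difference is that the paper performs the $\log(1+t^2)$ correction at the level of the $\om$'s before integrating, whereas you do it afterwards on the $\ka$'s.
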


\begin{proof}
   That $\fM$ has R-moderate growth is, by \cite[Proposition 3.6]{testfunctioncharacterization}, equivalent to
   \begin{equation} \label{eq:asska1}
     \A \al>0 \E \be>0 \E H\ge 1 \A t\ge 0 : 2 \om_{M^{(\be)}} (t) \le \om_{M^{(\al)}}(Ht) +H.
   \end{equation}
   Since $\widetilde \om_{M^{(\al)}}$ is equivalent to $\om_{M^{(\al)}}$, this implies by iteration that
   \[
      \A \al>0 \E \be>0 \E H\ge 1 \A t\ge 0 : 2 \widetilde \om_{M^{(\be)}} (t) \le \widetilde \om_{M^{(\al)}}(Ht) +H.
   \]
   Thus \eqref{eq:asska} holds:
   \begin{align*}
      2 \ka_\be(r) = 2  \int_1^\infty \frac{\widetilde \om_{M^{(\be)}}(rt)}{t^2} \,dt
      \le \int_1^\infty \frac{\widetilde \om_{M^{(\al)}}(Hrt)}{t^2} \,dt +  H \int_1^\infty \frac{1}{t^2} \,dt = \ka_\al(Hr) +  H.
   \end{align*}
   To see that \eqref{eq:asska} is equivalent to $\fK$ having R-moderate growth
   observe that, for each $\al>0$,
   $\ka_\al$ is equivalent to $\om_{K^{(\al)}}$, by \cite[Lemma 5.7]{RainerSchindl12},
   and use the remark at the beginning of the proof.
\end{proof}

\begin{proposition} \label{prop:KW}
  Let $\fM$ be a non-quasianalytic weight matrix of R-moderate growth.
  Then $\fK = \fK(\fM)$ and $\fQ = \fQ(\fM)$ are R-equivalent.
\end{proposition}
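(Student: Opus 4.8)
The plan is to carry out the comparison at the level of logarithms, exploiting that both $\log K^{(\al)}_j$ and $\log Q^{(\al)}_j$ are Young-conjugate-type suprema, built from $\ka_\al$ and from $P_\al$ respectively. Indeed, for a normalized representative we have $\log K^{(\be)}_k=\vh^*_{\ka_\be}(k)=\sup_{r>0}(k\log r-\ka_\be(r))$ (the supremum may be taken over $r>0$ rather than $r\ge 1$ because $\ka_\be$ is normalized), while \eqref{eq:Q} gives $\log Q^{(\al)}_k=\sup_{r>0}\bigl((k+\tfrac12)\log r-\tfrac12 P_\al(ir)\bigr)$. The bridge between the two is the two-sided estimate \eqref{lem:BMT}, applied to the pre-weight function $\widetilde\om_{M^{(\al)}}$, which yields $\tfrac1{2\pi}\ka_\al(r)\le\tfrac12 P_\al(ir)\le\tfrac2\pi\ka_\al(r)$ for all $r>0$. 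Thus, up to fixed multiplicative constants in front of $\ka_\al$ and an innocuous half-integer shift in the exponent, each $Q^{(\al)}$ is already a $K$-type sequence; what remains is to absorb those constants, and this is exactly where the R-moderate growth assumption enters through \eqref{eq:asska} (available by \Cref{lem:mgK}).

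For $\fK\{\preceq\}\fQ$ I would fix $\al$ and use the upper bound $\tfrac12 P_\al(ir)\le\tfrac2\pi\ka_\al(r)$ together with a single application of \eqref{eq:asska}, which is legitimate since $\tfrac2\pi<2$: this produces a parameter $\be$ and a dilation constant $H$ with $\tfrac2\pi\ka_\be(r)\le\ka_\al(Hr)+H$. Substituting $r\mapsto Hr$ in the supremum contributes the equivalence factor $H^k$, the additive constant is harmless, and the half-shift only increases the supremum; the net outcome is $K^{(\al)}\preceq Q^{(\be)}$. For the reverse inclusion $\fQ\{\preceq\}\fK$ I would fix $\al$ and use the lower bound $\tfrac12 P_\al(ir)\ge\tfrac1{2\pi}\ka_\al(r)$. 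Here a single application of \eqref{eq:asska} does not give enough room, so I would iterate it a fixed number of times (three iterations produce the factor $2^3=8>2\pi$), obtaining $8\ka_\be(\cdot/H)\le\ka_\al+\text{const}$ and hence $\tfrac1{2\pi}\ka_\al(r)\ge\tfrac4\pi\ka_\be(r/H)-\text{const}$. After the dilation $r\mapsto Hr$ this leaves a genuine surplus factor $\tfrac4\pi>1$ in front of $\ka_\be$; since $\log r=o(\ka_\be(r))$ for the weight function $\ka_\be$, the term $\tfrac12\log r$ coming from the half-shift is dominated by $(\tfrac4\pi-1)\ka_\be(r)$ and contributes only a bounded constant, so the supremum collapses to $\log K^{(\be)}_k$ up to an $O(k)$ error. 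This gives $Q^{(\al)}\preceq K^{(\be)}$, and the two inclusions together are the asserted R-equivalence.

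The routine parts I would dispatch quickly: normalizing $\ka_\al$, identifying the Young-conjugate suprema over $r>0$, and absorbing additive constants into the factor defining $\preceq$. The only substantive input is \eqref{lem:BMT}, which is also the sole place where the harmonic extension $P_\al$ is used; everything else is bookkeeping of constants. The main obstacle I anticipate is precisely that a multiplicative constant in front of $\ka_\al$ can never be absorbed inside a single weight sequence, so it must be traded, via \eqref{eq:asska} iterated until the accumulated factor exceeds $2\pi$, for a change of the parameter together with a dilation $r\mapsto Hr$. Keeping track of which of the two directions requires iteration, and verifying that the leftover surplus factor is exactly what swallows the half-integer shift, is the crux of the argument.
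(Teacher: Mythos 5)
Your argument is correct and follows essentially the same route as the paper: both directions rest on the two-sided comparison \eqref{lem:BMT} between $P_\al(i\cdot)$ and $\ka_\al$, with a single application of \eqref{eq:asska} for $\fK\{\preceq\}\fQ$ and a threefold iteration (factor $8$) for $\fQ\{\preceq\}\fK$, exactly as in the paper's proof. The only (harmless) deviation is how the half-integer shift is absorbed in the second direction: you use the surplus factor $\tfrac4\pi-1$ together with $\log r=o(\ka_\be(r))$, whereas the paper bounds $r^{n+\frac12}\le r^{n+1}$ for $r\ge1$ to land on $K^{(\be)}_{n+1}$ and then invokes the R-moderate growth of $\fK$ from \Cref{lem:mgK}.
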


\begin{proof}
Let $\al>0$ be fixed. By \eqref{lem:BMT} and \eqref{eq:asska},
there are $\be>0$ and $H\ge 1$ such that
\begin{align*}
   \frac{1}{2} P_{\be}(it) \le 2 \ka_\be(t) \le \ka_\al(Ht) +H, \quad t>0.
\end{align*}
Then, using without loss of generality $\ka_\al|_{[0,1]} =0$,
\begin{multline*}
   Q^{(\be)}_n = \sup_{r>0} \frac{r^{n+\frac{1}{2}}}{e^{\frac{1}{2}P_{\be}(ir)}}
   \ge \frac{1}{e^H} \sup_{r>0} \frac{r^{n+\frac{1}{2}}}{e^{\ka_{\al}(Hr)}}
   = \frac{1}{H^{n+\frac{1}{2}} e^H} \sup_{s>0} \frac{s^{n+\frac{1}{2}}}{e^{\ka_{\al}(s)}}
   \\
   = \frac{1}{H^{n+\frac{1}{2}} e^H} \sup_{s\ge 1} \frac{s^{n+\frac{1}{2}}}{e^{\ka_{\al}(s)}}
   \ge \frac{1}{H^{n+\frac{1}{2}} e^H} \sup_{s\ge 1} \frac{s^{n}}{e^{\ka_{\al}(s)}}
   = \frac{1}{H^{n+\frac{1}{2}} e^H} K^{(\al)}_n,
\end{multline*}
that is $K^{(\al)} \preceq Q^{(\be)}$.
Again by \eqref{lem:BMT} and \eqref{eq:asska},
there are $\be>0$ and $H\ge 1$ such that
\begin{align*}
  \ka_\be(t) \le \frac{1}{8} \ka_\al(Ht) + \frac{H}8 \le \frac{1}{2} P_\al(iHt) + \frac{H}8, \quad t>0.
\end{align*}
Thus, using $\ka_\be|_{[0,1]}=0$,
\begin{multline*}
   Q^{(\al)}_n = H^{n+\frac{1}{2}} \sup_{r>0} \frac{r^{n+\frac{1}{2}}}{e^{\frac{1}{2} P_{\al}(iHr)}}
   \le H^{n+\frac{1}{2}} e^{\frac{H}8} \sup_{r>0} \frac{r^{n+\frac{1}{2}}}{e^{\ka_{\be}(r)}}
   = H^{n+\frac{1}{2}} e^{\frac{H}8} \sup_{r\ge 1} \frac{r^{n+\frac{1}{2}}}{e^{\ka_{\be}(r)}}
   \\
   \le H^{n+\frac{1}{2}} e^{\frac{H}8} \sup_{r\ge 1} \frac{r^{n+1}}{e^{\ka_{\be}(r)}}
   = H^{n+\frac{1}{2}} e^{\frac{H}8} K^{(\be)}_{n+1}.
\end{multline*}
Since $\fK$ has R-moderate growth we are done.
\end{proof}

The following theorem is a simple generalization of \cite[Theorem 2]{Carleson:1961wa}.

\begin{theorem} \label{thm:moment}
  Let $\fM$ be a non-quasianalytic weight matrix of R-moderate growth and consider $\fK = \fK(\fM)$ and $\fQ = \fQ(\fM)$.
  Then $\La^{\{\fK\}} = \La^{\{\fQ\}}  \subseteq j^\infty \cE^{\{\fM\}}(\R)$.
\end{theorem}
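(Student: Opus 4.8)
The first equality is immediate: by \Cref{prop:KW} the matrices $\fK$ and $\fQ$ are R-equivalent, and since $M'\preceq M$ is equivalent to $\La^{\{M'\}}\subseteq\La^{\{M\}}$, the relation $\fK\{\preceq\}\fQ$ together with $\fQ\{\preceq\}\fK$ yields mutual inclusion of the inductive limits $\La^{\{\fK\}}=\on{ind}_{K\in\fK}\La^{\{K\}}$ and $\La^{\{\fQ\}}$, hence $\La^{\{\fK\}}=\La^{\{\fQ\}}$. It thus remains to prove $\La^{\{\fQ\}}\subseteq j^\infty\cE^{\{\fM\}}(\R)$, which is where a suitable adaptation of the proof of \cite[Theorem 2]{Carleson:1961wa} enters. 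Fix $a\in\La^{\{\fQ\}}$; then $|a_k|\le C\si^k Q^{(\al)}_k$ for some $\al>0$, $\si\ge1$, and $C>0$, and the plan is to produce $f\in\cE^{\{\fM\}}(\R)$ with $f^{(k)}(0)=a_k$ for all $k$.

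The key device for handling the Roumieu inductive limit (the factor $\si^k$) is a rescaling argument, which avoids any appeal to an absorption property of $\fQ$. Choose $\rho>\si$ and set $b_k:=a_k/\rho^k$, so that $|b_k|\le C(\si/\rho)^k Q^{(\al)}_k$ with $\si/\rho<1$. I apply \Cref{thm:Carleson1} with the non-quasianalytic pre-weight function $\om:=\widetilde\om_{M^{(\al)}}$ (for which the sequence in that theorem is precisely $Q^{(\al)}$, as $P_\al=P_{\widetilde\om_{M^{(\al)}}}$) to the target $s_k:=(-i)^k b_k$ and the weights $\la_k:=Q^{(\al)}_k/(k+1)$. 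Splitting $s$ into real and imaginary parts if necessary, the hypotheses are readily checked: the estimate $\sum_k |s_k|^2/\la_k^2\le C^2\sum_k (k+1)^2(\si/\rho)^{2k}<\infty$ gives $s\in\cS_\la$, while $\sum_k(\la_k/Q^{(\al)}_k)^2=\sum_k(k+1)^{-2}<\infty$ is the required Carleson condition. This yields $g\in\cF_\om$ with $L_k(g)=s_k$. I then define
\[
  h(x):=\int_{-\infty}^\infty g(t)\,e^{ixt}\,e^{-\om(t)}\,dt,
\]
which is well defined and smooth because $e^{-\om}\le(1+t^2)^{-1}$ is integrable, and differentiation under the integral sign gives $h^{(k)}(0)=i^k L_k(g)=b_k$.

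It remains to bound the derivatives of $h$ and to locate $h$ in $\cE^{\{\fM\}}(\R)$; this is the step where R-moderate growth is essential. By Cauchy--Schwarz,
\[
  |h^{(k)}(x)|\le\|g\|_\om\Big(\int_{-\infty}^\infty t^{2k}e^{-\om(t)}\,dt\Big)^{1/2},
\]
uniformly in $x\in\R$. Using $e^{-\om(t)}=e^{-\om_{M^{(\al)}}(t)}/(1+t^2)$ together with the elementary bound $e^{-\om_{M^{(\al)}}(t)}\le M^{(\al)}_{2k}/(M^{(\al)}_0|t|^{2k})$ coming from the definition of the associated function, the moment integral is dominated by $C'M^{(\al)}_{2k}$ with $C'$ independent of $k$. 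R-moderate growth of $\fM$ now provides $N\in\fM$ and $D\ge1$ with $M^{(\al)}_{2k}\le D^{2k}N_k^2$, whence $(M^{(\al)}_{2k})^{1/2}\le D^k N_k$ and therefore $|h^{(k)}(x)|\le\sqrt{C'}\,\|g\|_\om\,D^k N_k$ for all $x$ and $k$; thus $h\in\cE^{\{\fM\}}(\R)$. Finally I undo the rescaling: the function $f(x):=h(\rho x)$ again lies in $\cE^{\{\fM\}}(\R)$ (rescaling only alters the inner index $\si$, which the inductive limit absorbs) and satisfies $f^{(k)}(0)=\rho^k h^{(k)}(0)=\rho^k b_k=a_k$, so $j^\infty f=a$. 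The main obstacle is precisely this last derivative estimate: one must convert the moment integral, naturally controlled by $M^{(\al)}_{2k}$, into a bound against a single sequence $N\in\fM$ with only a geometric loss, and this is exactly what R-moderate growth delivers.
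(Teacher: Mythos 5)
Your proof is correct and follows essentially the same route as the paper: apply Carleson's moment theorem with the weight $\widetilde\om_{M^{(\al)}}$, recover the function as a Fourier-type integral, bound the derivatives via Cauchy--Schwarz and the moment estimate $t^{2k}e^{-\widetilde\om_{M^{(\al)}}(t)}\le M^{(\al)}_{2k}/(1+t^2)$, and invoke R-moderate growth to pass from $M^{(\al)}_{2k}$ to $D^{2k}N_k^2$. The only cosmetic differences are your choice $\la_k=Q^{(\al)}_k/(k+1)$ in place of $2^{-k}Q^{(\al)}_k$ and handling the Roumieu scale factor by a final substitution $x\mapsto\rho x$ rather than building it into the exponential kernel.
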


\begin{proof}
   The identity $\La^{\{\fK\}} = \La^{\{\fQ\}}$ is a consequence of \Cref{prop:KW}.
   Let $a \in \La^{\{\fQ\}}$, i.e., there exist $\al,C,\rh >0$ such that $|a_n| \le C \rh^n Q^{(\al)}_n$ for all $n$.
   Set $s_n := (3 \rh i)^{-n} a_n$ and $\la_n := 2^{-n} Q^{(\al)}_n$.
   By \Cref{thm:Carleson1}, there exists $f \in \cF_{\widetilde \om_{M^{(\al)}}}$ such that $L_n(f)=s_n$ for all $n$.
   The function
   \[
    g(t) := \int_{-\infty}^\infty e^{3\rh i xt} f(x) e^{-\widetilde \om_{M^{(\al)}}(x)} \,dx
   \]
   fulfills $j^\infty g = a$ and belongs to $\cE^{\{\fM\}}(\R)$. Indeed,
   \begin{align*}
     |g^{(n)}(t)| &\le (3\rh)^n \int_{-\infty}^\infty |x|^n |f(x)| e^{-\widetilde \om_{M^{(\al)}}(x)} \,dx
     \\
     &\le (3\rh)^n \|f\|_{\widetilde \om_{M^{(\al)}}}
     \Big(\int_{-\infty}^\infty x^{2n} e^{-\widetilde \om_{M^{(\al)}}(x)} \,dx \Big)^{1/2}
   \end{align*}
   and
   \begin{align*}
      x^{2n} e^{-\widetilde \om_{M^{(\al)}}(x)} = \frac{x^{2n}}{\exp(\om_{M^{(\al)}}(x))} \frac{1}{1+x^2} \le  \frac{1}{1+x^2} \sup_{x\ge 0} \frac{x^{2n}}{\exp(\om_{M^{(\al)}}(x))} =  \frac{M^{(\al)}_{2n}}{1+x^2}.
   \end{align*}
   Since $\fM$ has R-moderate growth,
   there exist $\be >0$ and $H\ge 1$ such that
   $M^{(\al)}_{2n} \le H^n (M^{(\be)}_{n})^2$ for all $n$. Thus
   \begin{align*}
      |g^{(n)}(t)| \le (3\rh \sqrt{H})^n \|f\|_{\widetilde \om_{M^{(\al)}}} M^{(\be)}_{n}  \Big(\int_{-\infty}^\infty \frac{1}{1+x^2} \,dx \Big)^{1/2},
   \end{align*}
   that is $g \in \cE^{\{\fM\}}(\R)$.
\end{proof}

\section{Other derived weights related to the Borel problem} \label{sec:optimal}

\subsection{Optimal solution of the Borel problem}

Let $M$ be a non-quasianalytic weight sequence and $M'$ a positive sequence.
The condition that gives the optimal solution in the Borel problem is
\begin{equation} \label{eq:SV}
   \E s \in \N_{\ge 1} : \sup_{j \in \N_{\ge 1}} \frac{\sup_{0\le i <j} \big(\frac{M'_j}{s^j M_i}\big)^{1/(j-i)}}{j} \sum_{k \ge j} \frac{1}{\mu_k} < \infty
\end{equation}
which we abbreviate by $M' \prec_{SV} M$:

\begin{theorem}[{\cite{surjectivity}, \cite[Theorem 3.2]{mixedramisurj}, \cite[Theorem 2.2]{maximal}}] \label{thm:DCSV}
  Let $M$ be a non-quasianalytic weight sequence and $M'$ a positive sequence
  such that $\liminf_{k \to \infty} (M'_k/k!)^{1/k} >0$. Then
  $\La^{\{M'\}} \subseteq j^\infty \cE^{\{M\}}(\R)$
  if and only if $M' \prec_{SV} M$.
\end{theorem}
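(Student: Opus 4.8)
Since this result is quoted from \cite{surjectivity}, \cite{mixedramisurj}, and \cite{maximal}, my plan is to reconstruct a self-contained argument splitting the equivalence into a constructive sufficiency direction and a functional-analytic necessity direction. Throughout I would use that $j^\infty \cE^{\{M\}}(\R)$ depends only on germs at $0$, so the source space may be replaced by the single (LB)-space $\on{ind}_{\sigma} \cE^M_\sigma([-1,1])$ without changing the jet image; this is the space on which the quantitative work takes place.

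For \textbf{sufficiency} ($M' \prec_{SV} M \Rightarrow$ inclusion) I would argue by explicit construction. Given $a \in \La^{\{M'\}}$, say $|a_n| \le C \sigma^n M'_n$, I would produce a preimage as a series $f = \sum_{n} a_n \tfrac{x^n}{n!}\,\theta_n(x)$, where each $\theta_n$ is a cutoff equal to $1$ near $0$ and supported in $[-\lambda_n,\lambda_n]$, and the radii $\lambda_n>0$ are to be optimized. Since $\theta_n \equiv 1$ near $0$, the $n$-th summand contributes exactly $a_n$ to the $n$-th Taylor coefficient and nothing to the others, so formally $j^\infty f = a$. The decisive analytic input is the classical Denjoy--Carleman construction (infinite convolutions of scaled boxes with half-widths $\mu_k^{-1}$) of flat cutoffs whose derivatives at scale $\lambda_n$ are controlled by the non-quasianalyticity tails $\sum_{k \ge n} 1/\mu_k$. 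Estimating $f_n^{(k)}$ by the Leibniz rule then reduces membership $f \in \cE^{\{M\}}(\R)$ to the convergence of a series whose general term, after choosing $\lambda_n$ optimally, is governed precisely by the quantity $\frac{1}{j}\sup_{i<j}(M'_j/(s^j M_i))^{1/(j-i)}\sum_{k \ge j}1/\mu_k$ occurring in \eqref{eq:SV}: the reciprocal optimal radius matches the inner supremum over $i<j$, and the cutoff bound supplies the tail factor. Hence $\prec_{SV}$ is exactly the condition making the series converge in some $\cE^M_\sigma([-1,1])$.

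For \textbf{necessity} (inclusion $\Rightarrow M' \prec_{SV} M$) the inclusion says the Borel map is a continuous linear surjection of (LB)-spaces onto $\La^{\{M'\}}$. I would either invoke an open-mapping/closed-graph theorem for webbed or (LB)-spaces to convert abstract surjectivity into a uniform quantitative estimate, or argue by contradiction: assuming \eqref{eq:SV} fails, I would build a target $a \in \La^{\{M'\}}$ realizing the unboundedness index by index and show it cannot be the jet of any $f \in \cE^{\{M\}}(\R)$, using a Gorny--Cartan--type interpolation inequality that constrains how fast $|f^{(j)}(0)|$ may grow relative to the lower derivatives $|f^{(i)}(0)|$, $i<j$. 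The non-quasianalyticity of $M$ enters here as the source of the convergent tail $\sum_{k \ge j}1/\mu_k$ that provides the obstruction.

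The main obstacle I anticipate is the necessity direction: extracting the sharp exponent $1/(j-i)$ together with the tail weight $\sum_{k \ge j}1/\mu_k$ from mere surjectivity is not automatic, since the spaces are non-metrizable and the open-mapping estimates require care; the alternative explicit route demands a counterexample sequence whose growth matches the supremum in \eqref{eq:SV} simultaneously in $j$ and $i$, which is the genuinely delicate bookkeeping. The sufficiency direction, by contrast, is mostly a matter of organizing the cutoff estimates and the optimization of $\lambda_n$.
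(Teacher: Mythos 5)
The paper does not prove this theorem: it is quoted from \cite{surjectivity}, \cite{mixedramisurj}, and \cite{maximal}, and the only original content surrounding the statement is the observation that the hypothesis $M' \preceq M$ imposed in those references is superfluous, because $M' \prec_{SV} M$ already implies $M' \preceq M$ by \cite[Lemma 3.2]{maximal} (this is \eqref{eq:SVimpliesincl}) and the inclusion $\La^{\{M'\}} \subseteq j^\infty \cE^{\{M\}}(\R)$ trivially forces $M' \preceq M$ as well. Your outline is consistent in spirit with how the cited papers argue --- sufficiency by an explicit series $\sum_n a_n x^n \theta_n(x)/n!$ with Bang-type cutoffs built from non-quasianalyticity, necessity by converting the inclusion into a quantitative estimate and testing it --- but as written it is a plan rather than a proof, and both halves stop exactly where the substance begins.

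Concretely: in the sufficiency direction you assert that after ``choosing $\lambda_n$ optimally'' the Leibniz-rule estimate is ``governed precisely by'' the quantity in \eqref{eq:SV}, but you never exhibit the choice of $\lambda_n$, the derivative bounds on $\theta_n$ in terms of the tails $\sum_{k\ge n}\mu_k^{-1}$, or the resulting summable majorant; matching the exponent $1/(j-i)$, the supremum over $i<j$, and the existential quantifier over $s$ in \eqref{eq:SV} is the entire content of the Schmets--Valdivia theorem and cannot be taken for granted. In the necessity direction you offer two alternative strategies (an open-mapping argument for (LB)-spaces, or a counterexample via a Gorny--Cartan-type inequality) and then candidly identify extracting the sharp condition as ``the genuinely delicate bookkeeping'' --- that is, you flag the gap rather than close it. Note that a Grothendieck-factorization step of the kind you would need does appear in the paper, but only in the proof of \Cref{thm:SVchar}, where it reduces the weight-matrix statement to the single-sequence \Cref{thm:DCSV}, which is itself taken as a black box. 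To turn your proposal into a proof you would have to carry out the quantitative core of both directions, which is precisely what the cited references do.
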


In the listed references also the assumption $M' \preceq M$ (or even a stronger assumption)
is made. But
note that, by \cite[Lemma 3.2]{maximal},
\begin{equation} \label{eq:SVimpliesincl}
    M'\prec_{SV} M \implies M' \preceq M,
\end{equation}
and clearly also $\La^{\{M'\}} \subseteq j^\infty \cE^{\{M\}}(\R)$ implies $M' \preceq M$
so that \Cref{thm:DCSV} holds as stated without the additional assumption $M'\preceq M$.

We remark that $\prec_{SV}$ induces a relation on the set of equivalence classes of weight sequences $M$ such that $\liminf_{k \to \infty} (M_k/k!)^{1/k} >0$
which is
antisymmetric and transitive (indeed, $M_1 \prec_{SV} M_0$ and $M_2 \prec_{SV} M_1$ imply
$\La^{\{M_2\}} \subseteq j^\infty\cE^{\{M_1\}}(\R) \subseteq j^\infty\cE^{\{M_0\}}(\R)$ by \eqref{eq:SVimpliesincl} and so $M_2 \prec_{SV} M_0$ by \Cref{thm:DCSV}).

\begin{theorem} \label{thm:SVchar}
  Let $\fM$ be a non-quasianalytic weight matrix and
  $\fM'$ a one-parameter family of positive sequences such that
  $\liminf_{k \to \infty} (M'_k/k!)^{1/k} >0$ for all $M' \in \fM'$.
  Then the following conditions are equivalent:
  \begin{enumerate}
    \item $\La^{\{\fM'\}} \subseteq j^\infty \cE^{\{\fM\}}(\R)$.
    \item $\forall M' \in \fM' \E M \in \fM :  M' \prec_{SV} M$.
  \end{enumerate}
\end{theorem}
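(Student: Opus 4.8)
The plan is to reduce the whole statement to the single–weight–sequence characterization in \Cref{thm:DCSV}, the passage between the two being an inductive–limit localization. Since $\La^{\{\fM'\}}=\bigcup_{M'\in\fM'}\La^{\{M'\}}$ by definition, the inclusion in (1) holds if and only if $\La^{\{M'\}}\subseteq j^\infty\cE^{\{\fM\}}(\R)$ for every $M'\in\fM'$, and (2) is likewise a statement for each $M'\in\fM'$ separately. So I would fix $M'\in\fM'$ once and for all and prove
\[
  \La^{\{M'\}}\subseteq j^\infty\cE^{\{\fM\}}(\R)
  \quad\Longleftrightarrow\quad
  \E M\in\fM:\ M'\prec_{SV} M .
\]
By \Cref{thm:DCSV} (applicable since $\fM$ is non-quasianalytic and $\liminf_k(M'_k/k!)^{1/k}>0$) the right–hand side equals $\E M\in\fM:\La^{\{M'\}}\subseteq j^\infty\cE^{\{M\}}(\R)$, so everything comes down to localizing a single $M\in\fM$. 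The implication $\Leftarrow$, hence $(2)\Rightarrow(1)$, is immediate: $M'\prec_{SV}M$ gives $\La^{\{M'\}}\subseteq j^\infty\cE^{\{M\}}(\R)\subseteq j^\infty\cE^{\{\fM\}}(\R)$ via \Cref{thm:DCSV} and $\cE^{\{M\}}(\R)\subseteq\cE^{\{\fM\}}(\R)$.

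The content is the forward implication $(1)\Rightarrow(2)$: from $\La^{\{M'\}}\subseteq j^\infty\cE^{\{\fM\}}(\R)$ I must extract one $M=M^{(\al)}\in\fM$ serving the entire space $\La^{\{M'\}}$ at once. Two observations prepare the ground. First, each class $\cE^{\{M\}}(\R)$ is invariant under dilations $f\mapsto f(\lambda\,\cdot\,)$, and this dilation acts on jets by $(a_k)\mapsto(\lambda^k a_k)$, carrying $\La^{M'}_\si$ onto $\La^{M'}_{\lambda\si}$; hence $\La^{M'}_1\subseteq j^\infty\cE^{\{M\}}(\R)$ already forces $\La^{\{M'\}}=\bigcup_\si\La^{M'}_\si\subseteq j^\infty\cE^{\{M\}}(\R)$, so it suffices to localize a single $\al$ for the Banach space $\La^{M'}_1$. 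Second, because jets depend only on germs at $0$, any function witnessing $a\in j^\infty\cE^{\{\fM\}}(\R)$ may, after restriction to $[-1,1]$, be taken to satisfy $\sup_{|x|\le1,\,k}|f^{(k)}(x)|/(\si^k M^{(\al)}_k)<\infty$ for some $\al,\si$.

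The heart is a Baire category step on the Banach (hence Baire) space $\La^{M'}_1$. I would set, for $\al,\si,R\in\N$,
\[
  A_{\al,\si}^R:=\Big\{a\in\La^{M'}_1:\ \E f\in C^\infty([-1,1]),\ j^\infty f=a,\ \sup_{|x|\le1,\,k}\tfrac{|f^{(k)}(x)|}{\si^k M^{(\al)}_k}\le R\Big\}.
\]
By the second observation these sets cover $\La^{M'}_1$, and each is closed: if $a^{(m)}\to a$ in $\La^{M'}_1$ with witnesses $f_m$ obeying the uniform bound, then $(f_m)$ is bounded together with all derivatives on $[-1,1]$, so by Arzel\`a--Ascoli and a diagonal argument a subsequence converges in $C^\infty([-1,1])$ to a limit $f$ with the same bound and $j^\infty f=\lim_m j^\infty f_m=a$. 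Baire's theorem then yields indices $\al_0,\si_0,R_0$ for which $A_{\al_0,\si_0}^{R_0}$ has interior; writing an arbitrary small $a$ as a difference of two elements of this set and rescaling shows that every $a\in\La^{M'}_1$ is the jet at $0$ of a germ lying in $\cE^{\{M^{(\al_0)}\}}$. Feeding this into \Cref{thm:DCSV} (here one passes to a global representative, which costs nothing since $M^{(\al_0)}$ is non-quasianalytic and $\prec_{SV}$ is detected by the germ at $0$) converts $\La^{\{M'\}}\subseteq j^\infty\cE^{\{M^{(\al_0)}\}}(\R)$ into $M'\prec_{SV}M^{(\al_0)}$, which is exactly (2) with $M:=M^{(\al_0)}\in\fM$.

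The main obstacle is this localization of a single $M\in\fM$: since $\La^{\{M'\}}$ is an (LB)-space and therefore not itself a Baire space, Baire's theorem cannot be applied to it directly, and the dilation invariance is precisely what reduces the problem to the single Banach step $\La^{M'}_1$ where Baire is available. The secondary technical point is the compatibility between germ–extendability — which is what the restriction argument and the compactness in the Baire step naturally produce — and the global class $\cE^{\{M^{(\al_0)}\}}(\R)$ appearing in \Cref{thm:DCSV}; this is harmless because jets are local and $M^{(\al_0)}$ is non-quasianalytic, but it is the place where the normal–family argument and the reduction to a neighborhood of $0$ must be set up with care.
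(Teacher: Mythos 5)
Your proof is correct, and its skeleton coincides with the paper's: fix $M'\in\fM'$, use \Cref{thm:DCSV} to translate condition (2) into an inclusion statement for a single $M\in\fM$, reduce via the dilation $a\mapsto(\rho^{-k}a_k)$, $f\mapsto f(\rho\,\cdot)$ to the Banach space $\La^{M'}_1$, localize one index, and feed the result back into \Cref{thm:DCSV}. The one genuine difference is the localization step itself. The paper replaces $\fM$ by the cofinal countable subfamily $\{M^{(n)}:n\in\N_{\ge1}\}$, passes to compactly supported functions $\cD^{\{\fM\}}([-1,1])=\on{ind}_n\cD^{M^{(n)}}_n([-1,1])$ by an initial cutoff multiplication, and then cites Grothendieck's factorization theorem to conclude $\La^{M'}_1\subseteq j^\infty\cD^{M^{(n)}}_n([-1,1])$ for a single $n$. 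You instead run the Baire category argument by hand: the sets $A^R_{\al,\si}$ are closed (via Arzel\`a--Ascoli and the fact that convergence in $\La^{M'}_1$ implies coordinatewise convergence, so the normal-family limit is again a witness), they cover $\La^{M'}_1$, and the interior point plus convexity, symmetry, and homogeneity of the jet map give the conclusion; the cutoff to globalize from $[-1,1]$ to $\R$ is deferred to the end and uses non-quasianalyticity of $M^{(\al_0)}$ together with the algebra property of log-convex weight sequences. Since Grothendieck's factorization theorem is itself a Baire argument, your route is essentially an inlined, self-contained proof of the special case needed here; it avoids the abstract functional-analytic citation at the cost of verifying closedness explicitly, and both routes require the same tacit reduction to a countable cofinal subfamily of $\fM$ (which you should state explicitly, as the paper does, since Baire needs a countable cover).
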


\begin{proof}
  \Cref{thm:DCSV} yields that (2) is equivalent to
  \begin{enumerate}
    \item[(3)] $\forall M' \in \fM' \E M \in \fM : \La^{\{M'\}} \subseteq j^\infty \cE^{\{M\}}(\R)$.
  \end{enumerate}

  That (3) implies (1) is clear.
  To see that (1) implies (3) let $M' \in \fM'$ and note that we may assume that $\fM = \{M^{(n)} : n \in \N_{\ge 1}\}$.
  By (1), $\La^{M'}_1 \subseteq j^\infty \cD^{\{\fM\}}([-1,1])$
  (by multiplication with a suitable cutoff function),
  where
  \[
    \cD^{\{\fM\}}([-1,1]) := \on{ind}_{n \in \N_{\ge 1}} \cD^{M^{(n)}}_n ([-1,1])
  \]
  and
  $\cD^{M^{(n)}}_n ([-1,1]) := \{f \in \cE^{M^{(n)}}_n([-1,1]) : \on{supp}(f) \subseteq [-1,1]\}$.
  By Grothendieck's factorization theorem \cite[24.33]{MeiseVogt97},
  $\La^{M'}_1 \subseteq j^\infty \cD^{M^{(n)}}_n ([-1,1])$ for some $n$.
  This inclusion implies $\La^{\{M'\}} \subseteq j^\infty \cE^{\{M^{(n)}\}} (\R)$ and hence (3) is proved.
  Indeed, if $a=(a_k) \in \La^{\{M'\}}$ then $a \in \La^{M'}_\rh$ for some $\rh>0$ and hence
  $b := (\rh^{-k}a_k) \in \La^{M'}_1$. There exists $g \in \cD^{M^{(n)}}_n ([-1,1])$ with $b = j^\infty g$
  so that $f(x):= g(\rh x) \in \cE^{\{M^{(n)}\}} (\R)$ satisfies $a = j^\infty f$.
\end{proof}

\begin{corollary} \label{cor:SVchar}
  Let $\fM$ be a non-quasianalytic weight matrix of R-moderate growth and consider $\fK = \fK(\fM)$.
  Then $\forall \al>0 \E \be>0 : K^{(\al)} \prec_{SV} M^{(\be)}$.
\end{corollary}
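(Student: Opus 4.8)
The plan is to read the corollary directly off the two principal results already in hand, namely the inclusion furnished by \Cref{thm:moment} and the characterization of such inclusions in \Cref{thm:SVchar}. There is essentially no new estimate to carry out: the entire content lies in checking that the hypotheses of these two theorems are simultaneously met by the pair $(\fK,\fM)$, after which the conclusion is a formal matter. In particular I expect to make no use of the explicit shape of the relation $\prec_{SV}$ in \eqref{eq:SV}.

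First I would verify that $\fK = \fK(\fM)$ is an admissible family on the left-hand side, i.e.\ that it meets the standing hypothesis imposed on $\fM'$ in \Cref{thm:SVchar}. By \Cref{lem:fKproperties}, $\fK$ is a weight matrix and $(K^{(\al)}_j/j!)^{1/j} \to \infty$ for every $\al>0$; in particular $\liminf_{k \to \infty} (K^{(\al)}_k/k!)^{1/k} > 0$ for each $K^{(\al)} \in \fK$. Hence $\fK$ plays the role of a legitimate one-parameter family $\fM'$ in \Cref{thm:SVchar}.

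Next, since $\fM$ is by assumption a non-quasianalytic weight matrix of R-moderate growth, \Cref{thm:moment} applies and yields $\La^{\{\fK\}} = \La^{\{\fQ\}} \subseteq j^\infty \cE^{\{\fM\}}(\R)$. This is precisely condition (1) of \Cref{thm:SVchar} taken with $\fM' = \fK$. Invoking the equivalence of conditions (1) and (2) of that theorem, condition (2) now holds, and it reads $\forall K^{(\al)} \in \fK \; \exists M^{(\be)} \in \fM : K^{(\al)} \prec_{SV} M^{(\be)}$, which is exactly the assertion of the corollary.

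The only point demanding any care — and thus the sole potential obstacle — is the bookkeeping of hypotheses across the two input theorems. \Cref{thm:moment} requires R-moderate growth of $\fM$ (supplied by assumption and indispensable for the Carleson-type construction behind that theorem), whereas \Cref{thm:SVchar} requires $\fM$ to be non-quasianalytic (also assumed, and already built into the very definition of $\fK$) together with the growth lower bound on each $K^{(\al)}$ that \Cref{lem:fKproperties} provides. Once these are lined up the proof is immediate; no appeal to the pointwise statement \Cref{thm:DCSV} beyond its use inside \Cref{thm:SVchar} is needed.
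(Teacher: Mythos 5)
Your proposal is correct and coincides with the paper's own proof, which likewise cites \Cref{lem:fKproperties} for the growth hypothesis on the sequences $K^{(\al)}$, \Cref{thm:moment} for the inclusion $\La^{\{\fK\}}\subseteq j^\infty\cE^{\{\fM\}}(\R)$, and \Cref{thm:SVchar} to convert that inclusion into the stated $\prec_{SV}$ relations. Nothing further is needed.
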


\begin{proof}
  This follows from \Cref{lem:fKproperties}, \Cref{thm:moment}, and \Cref{thm:SVchar}.
\end{proof}

\subsection{The derived sequence $L$} \label{sec:L}

Let $M$ be a non-quasianalytic weight sequence.
We define the sequence $L = L(M)$ by setting
\begin{equation} \label{eq:L}
   L_k := \min_{0 \le j < k} \Big( \Big(\frac{k}{\sum_{\ell \ge k} \mu_\ell^{-1}} \Big)^{k-j} M_j \Big), \quad k\ge1, \quad L_0 := 1.
\end{equation}
The importance of $L$ relies on its \emph{optimality} with respect to $\cdot \prec_{SV} M$ (cf.\ \cite[Theorem 3.3]{maximal}):
We have $L \prec_{SV} M$ and if
$M'$ is a positive sequence with $\liminf_{k \to \infty} (M'_k/k!)^{1/k} >0$ and
$M' \prec_{SV} M$ then $M' \preceq L \preceq M$.
If $M'$ is log-convex we also have $M' \preceq \ul L \preceq M$, where
$\ul L$ is the log-convex minorant of $L$.
We remark that $(L_k/k!)^{1/k} \to \infty$; see \cite[Lemma 3.2]{maximal}.

\begin{lemma} \label{lem:orderL}
  If $M \preceq N$ are two non-quasianalytic weight sequences, then
  $L(M) \preceq L(N)$ and $\ul L(M) \preceq \ul L(N)$.
\end{lemma}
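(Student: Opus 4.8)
The plan is to translate the order relations $\preceq$ and $\prec_{SV}$ into inclusions between sequence spaces and $j^\infty$-images of function spaces, to chain these inclusions using the hypothesis $M \preceq N$, and then to read off the two conclusions from the optimality of $L(N)$ and $\ul L(N)$. The two assertions are proved in parallel, the second requiring one extra ingredient.

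For the first assertion I start from the optimality of $L(M)$, which gives $L(M) \prec_{SV} M$. Since $(L(M)_k/k!)^{1/k} \to \infty$, \Cref{thm:DCSV} applies and yields $\La^{\{L(M)\}} \subseteq j^\infty \cE^{\{M\}}(\R)$. The hypothesis $M \preceq N$ gives $\cE^{\{M\}}(\R) \subseteq \cE^{\{N\}}(\R)$, hence $j^\infty \cE^{\{M\}}(\R) \subseteq j^\infty \cE^{\{N\}}(\R)$, so that $\La^{\{L(M)\}} \subseteq j^\infty \cE^{\{N\}}(\R)$. As $N$ is a non-quasianalytic weight sequence and $L(M)$ satisfies the required $\liminf$-condition, the converse direction of \Cref{thm:DCSV} gives $L(M) \prec_{SV} N$. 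Finally, the optimality of $L(N)$ --- applicable because $L(M)$ is a positive sequence with $\liminf_k (L(M)_k/k!)^{1/k} > 0$ --- yields $L(M) \preceq L(N)$.

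For the second assertion I run the same chain with $\ul L(M)$ in place of $L(M)$. Since $\ul L(M) \le L(M)$ termwise we have $\ul L(M) \preceq L(M)$, whence $\La^{\{\ul L(M)\}} \subseteq \La^{\{L(M)\}} \subseteq j^\infty \cE^{\{N\}}(\R)$. To invoke \Cref{thm:DCSV} and conclude $\ul L(M) \prec_{SV} N$, and then to invoke the \emph{log-convex} version of the optimality of $\ul L(N)$, I need $\liminf_k (\ul L(M)_k/k!)^{1/k} > 0$. Granting this, and noting that $\ul L(M)$ is log-convex by construction, the log-convex optimality of $\ul L(N)$ gives $\ul L(M) \preceq \ul L(N)$.

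The main obstacle is precisely this lower bound $\liminf_k (\ul L(M)_k/k!)^{1/k} > 0$, since passing to the log-convex minorant could a priori destroy the growth $(L(M)_k/k!)^{1/k} \to \infty$. The idea is to exhibit an explicit log-convex minorant of $L(M)$ of the form $k \mapsto \delta^k k!$. Because $(L(M)_k/k!)^{1/k} \to \infty$, there is $k_0$ with $L(M)_k \ge k!$ for $k \ge k_0$; setting $\delta := \min\{1, \min_{1 \le k < k_0} (L(M)_k/k!)^{1/k}\} > 0$ then guarantees $\delta^k k! \le L(M)_k$ for every $k$. As $k!$ is log-convex and $\delta^k$ is log-affine, the sequence $\delta^k k!$ is log-convex, so by the defining property of the log-convex minorant (namely, every log-convex minorant lies below $\ul{L(M)}$) we get $\ul L(M)_k \ge \delta^k k!$ and hence $(\ul L(M)_k/k!)^{1/k} \ge \delta$ for all $k$. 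This yields the required positive $\liminf$ and closes the second assertion.
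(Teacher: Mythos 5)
Your proof of the first assertion is exactly the paper's argument: $L(M)\prec_{SV}M$ plus \Cref{thm:DCSV} and the inclusion $j^\infty\cE^{\{M\}}(\R)\subseteq j^\infty\cE^{\{N\}}(\R)$ give $L(M)\prec_{SV}N$, and optimality of $L(N)$ finishes. For the second assertion you take a genuinely different route. The paper disposes of it in one line by citing the general fact that the log-convex minorant operation is monotone for $\preceq$ (if $A_k\le C^kB_k$ then $\om_B(t)\le\om_A(Ct)$, hence $\ul A_k\le C^k\ul B_k$ by \eqref{eq:minorant}); applied to $L(M)\preceq L(N)$ this immediately gives $\ul L(M)\preceq\ul L(N)$. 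You instead rerun the optimality machinery with $\ul L(M)$ in place of $L(M)$, invoking the log-convex refinement ``$M'$ log-convex and $M'\prec_{SV}N$ implies $M'\preceq\ul L(N)$''. That forces you to verify $\liminf_k(\ul L(M)_k/k!)^{1/k}>0$, which the paper never needs; your verification via the explicit log-convex minorant $\delta^kk!\le L(M)_k$ and the extremal property of $\ul{L(M)}$ is correct and is the one nontrivial extra step. Both arguments are sound; the paper's is shorter and avoids any growth hypothesis on $\ul L(M)$, while yours has the minor side benefit of showing along the way that $\ul L(M)$ still grows at least like $\delta^kk!$ and that $\ul L(M)\prec_{SV}N$.
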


\begin{proof}
   By \Cref{thm:DCSV}, $L(M) \prec_{SV} M$ implies $L(M) \prec_{SV} N$ and so $L(M) \preceq L(N)$ by the optimality.
   Passing to the log-convex minorant preserves the order relation, see \cite[Lemma 2.6]{RainerSchindl12}.
\end{proof}

\subsection{The derived sequence $S$} \label{descendant}

Let $M$ be a non-quasianalytic weight sequence and $M'$ a positive sequence.
We consider the relation $M' \prec_{\ga_1} M$ defined by
\begin{equation} \label{eq:ga1}
    \sup_{j \in \N_{\ge 1}} \frac{\mu'_j}{j} \sum_{k \ge j} \frac{1}{\mu_k} < \infty.
\end{equation}
Let us recall a construction (see \cite[Section 4.1]{RainerSchindl16a}, \cite[Remark 9]{mixedsectorialextensions}, and
also \cite{Petzsche88})
which yields a weight sequence
that is \emph{optimal} with respect to $\cdot \prec_{\ga_1} M$ in the following sense.
The strongly log-convex weight sequence $S = S(M)$ defined by $S_k = \sigma_0 \sigma_1 \cdots \sigma_k$, where
$\sigma_0 := 1$ and
\[
\sigma_k := \tau_1 \frac{ k}{\tau_k},
\quad \tau_k := \frac{k}{\mu_k}+\sum_{\ell\ge k}\frac{1}{\mu_\ell},\quad k\ge 1,
\]
satisfies $\si \lesssim \mu$, $S \prec_{\ga_1} M$,
and if $M'$ is another weight sequence satisfying $\mu' \lesssim \mu$ and $M' \prec_{\ga_1} M$
then $\mu' \lesssim \si$;
see \cite[Lemma 4.2]{RainerSchindl16a}.
Note that $\si \lesssim \mu$ means that $\si/\mu$ is bounded and implies $S \preceq M$.

We have $S(M)\preceq L(M)$, since $M' \prec_{\ga_1} M$ implies $M' \prec_{SV} M$; cf.\ \cite[Lemma 2.4]{mixedramisurj} and \cite{surjectivity}.

Suppose that also $M'$ is a weight sequence and $\mu' \le \mu$.
Then $M' \prec_{\ga_1} M$ implies $\om_{M'} \prec_{st} \om_M$,
  see \cite[Lemma 5.7]{Rainer:2019ac}.
  If $M'$ additionally has moderate growth, then
  \[
    M' \prec_{SV} M \quad\Longleftrightarrow\quad M' \prec_{\ga_1} M \quad\Longleftrightarrow\quad \omega_{M'} \prec_{st} \omega_M;
  \]
  cf.\ \cite[Lemma 5.8]{mixedramisurj} and \cite[Remark 2.1]{maximal}.
  So $\prec_{\ga_1}$ induces a relation on equivalence classes of positive sequences which is always antisymmetric
  and becomes transitive if we restrict to weight sequences of moderate growth.

  In light of this it is important to know under which circumstances the derived sequence $S$ has moderate growth.
  This is the case if $M$ has moderate growth and also under the weaker condition
  \begin{equation} \label{eq:Mmg}
     \liminf_{j \to \infty} \frac{\mu_j}{j} \sum_{k \ge 2j} \frac{1}{\mu_k} >0;
  \end{equation}
  see \cite[Lemma 6]{mixedsectorialextensions}.
  In that case $S$ and $L$ are equivalent, see \cite[Theorem 3.11]{maximal}.

\subsection{Relations among the derived sequences}

Let $\fM = \{M^{(\al)}: \al>0\}$ be a non-quasianalytic weight matrix
and $\fK = \{K^{(\al)}: \al>0\}$ the derived weight matrix from \Cref{sec:wmK}.
For each $M^{(\al)}$ we consider the derived sequences $S^{(\al)}$, $L^{(\al)}$, and $\ul L^{(\al)}$ (the log-convex minorant of $L^{(\al)}$)
and the families
\[
  \fS = \{S^{(\al)} : \al >0\}, \quad \fL = \{L^{(\al)} : \al >0\}, \quad \ul \fL = \{\ul L^{(\al)} : \al >0\}.
\]
Formally, these collections are not weight matrices as defined above, but their deficiencies are minor and carry no weight.
We have $S^{(\al)} \preceq \ul L^{(\al)} \le L^{(\al)}$ for each $\al>0$.

\begin{theorem} \label{thm:SKL}
  Let $\fM$ be a non-quasianalytic weight matrix of R-moderate growth.
  Then the derived families satisfy $\fS \{\preceq\} \fK \{\preceq\}\ul \fL \{\preceq\} \fL$.
\end{theorem}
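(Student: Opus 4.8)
The plan is to unravel the double relation $\fS \{\preceq\} \fK \{\preceq\} \ul\fL \{\preceq\} \fL$ into its three constituent claims and to treat them in increasing order of difficulty. The relation $\ul\fL \{\preceq\} \fL$ is immediate: for a given $\al>0$ I take the same parameter, since $\ul L^{(\al)} \le L^{(\al)}$ pointwise forces $\ul L^{(\al)} \preceq L^{(\al)}$. For $\fK \{\preceq\} \ul\fL$, fix $\al>0$ and use \Cref{cor:SVchar} to obtain $\be>0$ with $K^{(\al)} \prec_{SV} M^{(\be)}$. By \Cref{lem:fKproperties} the sequence $K^{(\al)}$ is log-convex and satisfies $(K^{(\al)}_j/j!)^{1/j}\to\infty$, so the hypotheses of the optimality of the log-convex minorant (recalled after \eqref{eq:L}) are met, and I conclude $K^{(\al)} \preceq \ul L(M^{(\be)}) = \ul L^{(\be)}$. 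Thus the real content of the theorem is the first relation $\fS \{\preceq\} \fK$, which I treat next.

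Fix $\al>0$ and write $\si_\al := \om_{S^{(\al)}}$. By construction $S^{(\al)} \prec_{\ga_1} M^{(\al)}$ (see \Cref{descendant}); after replacing $S^{(\al)}$ by an equivalent weight sequence whose quotient sequence is dominated by $\mu^{(\al)}$ pointwise, which affects neither the conclusion $S^{(\al)} \preceq K^{(\be)}$ nor $\si_\al$ beyond an innocuous rescaling of its argument, the implication $\prec_{\ga_1} \Rightarrow \prec_{st}$ quoted in \Cref{descendant} gives $\om_{S^{(\al)}} \prec_{st} \om_{M^{(\al)}}$, that is $\ka_{\om_{M^{(\al)}}}(t) \le C\si_\al(t) + C$. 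Since $\ka_\al = \ka_{\widetilde\om_{M^{(\al)}}}$ is equivalent to $\ka_{\om_{M^{(\al)}}}$, this yields
\[
   \ka_\al(t) \le C'\si_\al(t) + C', \quad t\ge 0.
\]
Here lies the crux. This bound is multiplicative in the \emph{value}, whereas $S^{(\al)} \preceq K^{(\be)}$ is equivalent (both sequences being log-convex and normalized) to a bound $\om_{K^{(\be)}}(t) \le \si_\al(Ht)$ that rescales the \emph{argument}. Converting a value-multiplicative estimate into an argument-rescaling one is exactly the role of the parameter shift $\al \rightsquigarrow \be$ in the weight-matrix setting, and this is where I expect the main effort.

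To bridge the gap I would iterate the R-moderate growth of $\fK$. Applying \eqref{eq:asska} $m$ times, starting from $\al$, produces an index $\be=\be(m)$ and a constant $\widetilde H\ge 1$ with $2^m\,\ka_\be(t) \le \ka_\al(\widetilde H t) + \text{const}$. Feeding in the previous estimate at argument $\widetilde H t$ gives $\ka_\be(t) \le 2^{-m}C'\,\si_\al(\widetilde H t) + \text{const}$. The equivalence $\om_{K^{(\gamma)}} \sim \ka_\gamma$ from the proof of \Cref{lem:fKproperties} holds with a \emph{universal} constant $C''$ (it is the integer-versus-real discrepancy in the biconjugation $\vh_{\ka_\gamma}^{**}=\vh_{\ka_\gamma}$, with in fact $\om_{K^{(\gamma)}} \le \ka_\gamma$), so $\om_{K^{(\be)}}(t) \le C''2^{-m}C'\,\si_\al(\widetilde H t) + c$. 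Since $C'$ and $C''$ do not depend on $m$, I may fix $m$ in advance so large that $C'C''2^{-m}\le 1$, obtaining $\om_{K^{(\be)}}(t) \le \si_\al(\widetilde H t) + c$.

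Finally I convert back to sequences. As $S^{(\al)}$ and $K^{(\be)}$ are log-convex with $S^{(\al)}_0 = K^{(\be)}_0 = 1$, formula \eqref{eq:minorant} gives $S^{(\al)}_k = \sup_t t^k e^{-\si_\al(t)}$ and $K^{(\be)}_k = \sup_t t^k e^{-\om_{K^{(\be)}}(t)}$; substituting the bound on $\om_{K^{(\be)}}$ and rescaling $t$ yields $K^{(\be)}_k \ge e^{-c}\widetilde H^{-k} S^{(\al)}_k$, whence $S^{(\al)} \preceq K^{(\be)}$ and therefore $\fS \{\preceq\} \fK$. The delicate point throughout is the absorption of the value-multiplicative constant by iterating \eqref{eq:asska}, together with the verification that the conversion constant in $\om_{K^{(\gamma)}} \sim \ka_\gamma$ is uniform in $\gamma$, which is what permits fixing the iteration count before the parameter $\be$ is produced.
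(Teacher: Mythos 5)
Your proposal is correct and follows essentially the same route as the paper: the trivial inequality $\ul L^{(\al)}\le L^{(\al)}$, then \Cref{cor:SVchar} plus the optimality of $L$ and log-convexity of $K^{(\al)}$ for $\fK\{\preceq\}\ul\fL$, and for $\fS\{\preceq\}\fK$ the chain $S^{(\al)}\prec_{\ga_1}M^{(\al)}\Rightarrow\om_{S^{(\al)}}\prec_{st}\om_{M^{(\al)}}\Rightarrow\om_{K^{(\al)}}\le C\om_{S^{(\al)}}+C$. The only (immaterial) divergence is in absorbing the constant $C$: the paper raises the estimate to the power $1/C$ to get $S^{(\al)}_n\le e(K^{(\al)}_{Cn})^{1/C}$ and then applies the R-moderate growth of $\fK$ at the sequence level, whereas you iterate the equivalent condition \eqref{eq:asska} at the level of the functions $\ka_\ga$ to drive the constant below $1$ before converting to sequences --- both rest on \Cref{lem:mgK}.
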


\begin{proof}
   Let us first show $\fK \{\preceq\} \ul \fL$.
   By \Cref{cor:SVchar}, for each $\al>0$ there is $\be>0$ such that $K^{(\al)} \prec_{SV} M^{(\be)}$
   and thus $K^{(\al)} \preceq L^{(\be)}$ by optimality.
   Since $K^{(\al)}$ is log-convex, we also have $K^{(\al)} \preceq \ul L^{(\be)}$.

   For $\fS \{\preceq\} \fK$ observe that for each $\al>0$ we may assume that $\si^{(\al)} \le \mu^{(\al)}$ by dividing
   $\si^{(\al)}$ by a suitable constant.
   Then $\om_{S^{(\al)}} \prec_{st} \om_{M^{(\al)}}$ holds (cf.\ \Cref{descendant}), i.e., $\om_{S^{(\al)}} \preceq \ka_\al$, since $\ka_\al$ and
   $\ka_{\om_{M^{(\al)}}}$ are equivalent.
   By \cite[Lemma 5.7]{RainerSchindl12}, $\om_{K^{(\al)}}$ is equivalent to $\ka_\al$
   so that $\om_{K^{(\al)}} \le  C \om_{S^{(\al)}} + C$ for some positive integer $C$.
   Then
   \begin{align*}
      S^{(\al)}_n
      = \sup_{t\ge 0} \frac{t^n}{\exp(\om_{S^{(\al)}}(t))}
      &\le e \sup_{t\ge 0} \frac{t^n}{\exp(C^{-1}\om_{K^{(\al)}}(t))}
      \\
      &= e \Big(\sup_{t\ge 0} \frac{t^{Cn}}{\exp(\om_{K^{(\al)}}(t))}\Big)^{1/C}
      = e (K^{(\al)}_{Cn})^{1/C}.
   \end{align*}
   There exists $\be>0$ such that $S^{(\al)} \preceq K^{(\be)}$,
   since $\fK$ has R-moderate growth, by \Cref{lem:mgK}.
   Since $\ul \fL \{\preceq\} \fL$ is obvious, we are done.
\end{proof}

We may now complete the proof of \eqref{eq:main1}:
Let $\fM$ be a non-quasianalytic weight matrix of R-moderate growth.
Then \Cref{prop:KW}, \Cref{thm:DCSV}, \Cref{thm:SVchar}, and \Cref{thm:SKL} yield the sequence of inclusions in
the first line of \eqref{eq:main1}.

Suppose that $\fM'$ is a one-parameter family of positive sequences such that
$\liminf_{k \to \infty} (M'_k/k!)^{1/k} >0$ for all $M' \in \fM'$
and
$\La^{\{\fM'\}} \subseteq j^\infty \cE^{\{\fM\}}(\R)$.
Then for each $M' \in \fM'$ there is $M \in \fM$ such that $M' \prec_{SV} M$, by \Cref{thm:SVchar},
and hence $M' \preceq L(M)$, by optimality. That means $\fM' \{\preceq\} \fL$ and the second line in \eqref{eq:main1} is proved.

\subsection{Sufficient conditions for R-equivalence of the derived families $\fS$, $\fK$ and $\fL$}

\begin{theorem} \label{thm:sufficientRequi}
    Let $\fM = \{M^{(\al)}: \al>0\}$ be a non-quasianalytic weight matrix of R-moderate growth
     such that
     $\mu^{(\al)} \le \mu^{(\be)}$ if $\al \le \be$
     and
     \begin{equation} \label{eq:liminf}
        \A \al>0 \E \be>0 : \liminf_{k \to \infty} \frac{\mu^{(\be)}_k}{k} \sum_{j \ge k} \frac{1}{\mu^{(\al)}_j}>0.
     \end{equation}
    Assume that the derived family $\fS$ has the property
    \begin{equation} \label{eq:roquS}
       \A \al>0 \E \be>0 \E A\ge 1 \A j \in \N_{\ge 1} : \si^{(\al)}_j \le A (S^{(\be)}_j)^{1/j}.
    \end{equation}
    Then $\fS$, $\fK$, $\ul \fL$, and $\fL$ are R-equivalent.
\end{theorem}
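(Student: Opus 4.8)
The plan is to close the cycle of relations. By \Cref{thm:SKL} we already have $\fS \{\preceq\} \fK \{\preceq\} \ul\fL \{\preceq\} \fL$; since $\{\preceq\}$ is transitive, R-equivalence of all four families follows as soon as we establish the single reverse edge $\fL \{\preceq\} \fS$. Everything therefore reduces to showing that for each $\al>0$ there exist $\be>0$ and constants with $L^{(\al)} \preceq S^{(\be)}$.

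I would first unwind the two structural hypotheses. Writing $\nu^{(\al)}_k := \sum_{j\ge k}(\mu^{(\al)}_j)^{-1}$, the sequence $L^{(\al)}$ from \eqref{eq:L} is driven by $q^{(\al)}_k := k/\nu^{(\al)}_k$, while $S^{(\be)}$ is the product of the quotients $\si^{(\be)}_k = \tau^{(\be)}_1\, k/\tau^{(\be)}_k$ with $\tau^{(\be)}_k = k/\mu^{(\be)}_k + \nu^{(\be)}_k$. Fix $\al$ and use \eqref{eq:liminf} to pick $\be$; by the monotonicity $\mu^{(\al)}\le\mu^{(\be)}$ we may enlarge $\be$ so that $\be\ge\al$, whence $\nu^{(\be)}_k\le\nu^{(\al)}_k$. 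Condition \eqref{eq:liminf} furnishes $c>0$ with $k/\mu^{(\be)}_k \le c^{-1}\nu^{(\al)}_k$ for large $k$, so $\tau^{(\be)}_k \le (1+c^{-1})\,\nu^{(\al)}_k$ and hence
\[
   \si^{(\be)}_k \;=\; \tau^{(\be)}_1\,\frac{k}{\tau^{(\be)}_k}\;\gtrsim\; \frac{k}{\nu^{(\al)}_k}\;=\;q^{(\al)}_k ,
\]
uniformly in $k$, the finitely many small indices being absorbed into the constant. Thus $q^{(\al)}_i \le C\,\si^{(\be)}_i$ for all $i$, and in particular $\prod_{i\le k}q^{(\al)}_i \lesssim \rho^k S^{(\be)}_k$ for a suitable $\rho\ge1$. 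Separately, I read \eqref{eq:roquS} as the assertion that $\fS$ has R-moderate growth: since $\si^{(\al)}$ is increasing, $\si^{(\al)}_j \le A (S^{(\be)}_j)^{1/j}$ is the quotient form of the mixed inequality $S^{(\al)}_{2j}\le C^{2j}(S^{(\be)}_j)^2$.

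The heart of the argument is then a two-parameter version of the single-weight equivalence $L(M)\sim S(M)$ valid under \eqref{eq:Mmg} (\cite[Theorem 3.11]{maximal}). The minimum defining $L^{(\al)}_k$ in \eqref{eq:L} is attained near the index $j^\ast$ where $\mu^{(\al)}_{j}$ crosses $q^{(\al)}_k$, and the content of the single-weight estimate is that at this optimal index $L^{(\al)}_k \lesssim \rho^k \prod_{i\le k} q^{(\al)}_i$. Combined with the previous display this gives $L^{(\al)}_k \lesssim \rho^k S^{(\be)}_k$, i.e.\ $L^{(\al)}\preceq S^{(\be)}$. The role of R-moderate growth of $\fS$ (and of $\fM$) is to absorb the geometric constants and the parameter shifts $\al\rightsquigarrow\al'$ forced by the unavoidable use of (mixed) moderate growth inside the single-weight estimate, so that the final bound lands in one member $S^{(\ga)}$ of $\fS$; this is the same device employed at the end of the proof of \Cref{thm:SKL}.

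The main obstacle is exactly this transfer of the comparison $L^{(\al)}_k\lesssim\rho^k\prod_{i\le k}q^{(\al)}_i$ to the mixed setting. Unlike $S^{(\be)}$, which is a genuine product of quotients, $L^{(\al)}$ is built from a minimum over $j$ using the \emph{fixed} ratio $q^{(\al)}_k$ rather than the varying $q^{(\al)}_i$; bounding this minimum above by the product is precisely what requires moderate-growth structure, and a single member $M^{(\al)}$ of an only R-moderately-growing matrix need not enjoy it. Carrying the estimate through therefore demands careful bookkeeping of the parameter changes coming from R-moderate growth of $\fM$ and $\fS$, checking at each step that \eqref{eq:liminf} still relates the shifted index to a usable $\si^{(\be)}$. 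Once this is done, the cycle $\fS\{\preceq\}\fK\{\preceq\}\ul\fL\{\preceq\}\fL\{\preceq\}\fS$ is complete and the four families are R-equivalent.
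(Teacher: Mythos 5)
Your reduction is the right one: by \Cref{thm:SKL} the only missing edge is $\fL\{\preceq\}\fS$, and your derivation of $q^{(\al)}_k:=k/\sum_{\ell\ge k}(\mu^{(\al)}_\ell)^{-1}\le C\,\si^{(\be)}_k$ from \eqref{eq:liminf} and the monotonicity $\mu^{(\al)}\le\mu^{(\be)}$ is exactly what the paper does. But from there you take a wrong turn and the argument has a genuine gap. You try to bound $L^{(\al)}_k$ above by $\rho^k\prod_{i\le k}q^{(\al)}_i$ via a ``two-parameter version'' of the single-weight equivalence $L(M)\sim S(M)$ from \cite[Theorem~3.11]{maximal}, and you yourself concede that this step is the main obstacle and that a single member of an R-moderate-growth matrix need not have the moderate growth the single-weight argument requires. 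That step is never carried out, so the proof is not complete. It is also unnecessary: the definition \eqref{eq:L} gives, by simply taking $j=0$ in the minimum (and $M^{(\al)}_0=1$),
\begin{equation*}
  (L^{(\al)}_k)^{1/k}\;\le\;\frac{k}{\sum_{\ell\ge k}(\mu^{(\al)}_\ell)^{-1}}\;=\;q^{(\al)}_k,
\end{equation*}
so no comparison with the product $\prod_{i\le k}q^{(\al)}_i$ and no mixed analogue of \cite[Theorem~3.11]{maximal} is needed.

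The second problem is your use of \eqref{eq:roquS}. You read it as R-moderate growth of $\fS$ and relegate it to ``absorbing geometric constants and parameter shifts.'' Its actual role is the decisive last step: after $(L^{(\al)}_k)^{1/k}\le q^{(\al)}_k\le D\,\si^{(\be)}_k$ one still has a bound by a \emph{quotient} $\si^{(\be)}_k$, whereas one needs a bound by a \emph{root} $(S^{(\ga)}_k)^{1/k}$ to conclude $L^{(\al)}\preceq S^{(\ga)}$; condition \eqref{eq:roquS} is precisely the conversion $\si^{(\be)}_k\le A(S^{(\ga)}_k)^{1/k}$. Chaining the three displayed inequalities gives $L^{(\al)}_k\le (AD)^kS^{(\ga)}_k$ and finishes the proof in three lines. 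So: your reduction and your use of \eqref{eq:liminf} are correct, but the core estimate is missing and replaced by an unproved (and avoidable) claim, and the hypothesis \eqref{eq:roquS} is not used where it must be.
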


\begin{proof}
   By \Cref{thm:SKL}, it suffices to show $\fL  \{\preceq\} \fS$.
   Let $\al>0$ be fixed.
   By \eqref{eq:liminf}, there exist $\be>0$ and $C\ge 1$ such that
   \[
     \frac{k}{\mu^{(\be)}_k} \le C \sum_{j \ge k} \frac{1}{\mu^{(\al)}_j}.
   \]
   We may assume that $\al \le \be$ and hence $\mu^{(\al)} \le \mu^{(\be)}$.
   Thus we get
   \begin{equation*}
      \ta^{(\be)}_k = \frac{k}{\mu^{(\be)}_k}+\sum_{j\ge k}\frac{1}{\mu^{(\be)}_j}  \le (C+1) \sum_{\ell \ge k} \frac{1}{\mu^{(\al)}_\ell}
   \end{equation*}
   and consequently,
   \begin{equation*}
      \frac{k}{\si^{(\be)}_k} \le D \sum_{\ell \ge k} \frac{1}{\mu^{(\al)}_\ell}.
   \end{equation*}
   Then, by \eqref{eq:roquS}, there exist $\ga>0$ and $A\ge 1$ such that
   \[
      (L^{(\al)}_k)^{1/k} \le  \frac{k}{\sum_{\ell \ge k} (\mu^{(\al)}_\ell)^{-1}} \le D \si^{(\be)}_k \le AD  (S^{(\ga)}_k)^{1/k}
   \]
   and we are done.
\end{proof}

The conclusion of \Cref{thm:sufficientRequi} (invoking \Cref{thm:moment}) means that
\[
\La^{\{\fS\}} = \La^{\{\fK\}} = \La^{\{\fQ\}} = \La^{\{\ul \fL\}} = \La^{\{\fL\}}.
\]
So under the assumption of the theorem all presented solutions to the mixed Borel problem coincide with the optimal one; cf.\ \Cref{sec:L}.
We shall see in \Cref{thm:fKfLom} that for non-quasianalytic Braun--Meise--Taylor classes similarly
optimality is achieved by the different approaches leading to $\La^{\{\ka\}}$,  $\La^{\{\fK\}} = \La^{\{\fQ\}}$, and $\La^{\{\ul \fL\}}$.

\begin{remark}
  Let us discuss what R-equivalence of $\fS$, $\fK$, $\ul \fL$, and $\fL$ implies for $\fM$.
  In view of \Cref{thm:SKL}, this means that for each $\al>0$ there exists $\be >0$ such that
  $(L^{(\al)}_k)^{1/k} \lesssim (S^{(\be)}_k)^{1/k}$. Since $(S^{(\be)}_k)^{1/k} \le \si^{(\be)}_k = \ta^{(\be)}_1 \frac{k}{\ta^{(\be)}_k}$,
  we may infer
  \[
    \Big(\Big(\frac{k}{\sum_{\ell \ge k} (\mu^{(\al)}_\ell)^{-1}} \Big)^{k-j_k} M^{(\al)}_{j_k} \Big)^{1/k} \lesssim \frac{k}{\ta^{(\be)}_k},
  \]
  where $j_k$ is an integer $0\le j <k$ where the minimum in the definition of $L^{(\al)}_k$ is attained.
  A simple conversion of terms gives
  \[
    \Big(\frac{1}{k}\sum_{\ell \ge k} \frac{1}{\mu^{(\al)}_\ell} \Big)^{j_k/k} (M^{(\al)}_{j_k})^{1/k}
    \lesssim \frac{1}{\ta^{(\be)}_k} \sum_{\ell \ge k} \frac{1}{\mu^{(\al)}_\ell}
  \]
  and so, by the definition of $\ta^{(\be)}_k$,
  \[
    \Big(\frac{1}{k}\sum_{\ell \ge k} \frac{1}{\mu^{(\al)}_\ell} \Big)^{j_k/k} (M^{(\al)}_{j_k})^{1/k}
    \lesssim \frac{\mu^{(\be)}_k}{k} \sum_{\ell \ge k} \frac{1}{\mu^{(\al)}_\ell}.
  \]
  We see that \eqref{eq:liminf} would follow if the left-hand side is bounded away from zero. 
\end{remark}

In the following lemma we give conditions \emph{purely for $\fM$} which imply the assumptions of \Cref{thm:sufficientRequi}.
Notice however that \eqref{eq:invmg} seems to be quite restrictive.

\begin{lemma}
  Let $\fM = \{M^{(\al)}: \al>0\}$ be a non-quasianalytic weight matrix
  such that
  $\mu^{(\al)} \le \mu^{(\be)}$ if $\al \le \be$,
  \begin{equation} \label{eq:liminf2}
     \A \al>0 \E \be>0 : \liminf_{k \to \infty} \frac{\mu^{(\be)}_k}{k} \sum_{j \ge 2k} \frac{1}{\mu^{(\al)}_j}>0,
  \end{equation}
  and
  \begin{equation} \label{eq:invmg}
     \A \al>0 \E \be>0 \E A\ge 1 \A j \in \N : (\mu^{(\al)}_j)^2 \le A \mu^{(\be)}_{2j}.
  \end{equation}
  Then \eqref{eq:liminf} and \eqref{eq:roquS} hold.
\end{lemma}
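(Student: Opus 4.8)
The plan is to verify the two conclusions separately; \eqref{eq:liminf} is immediate, while \eqref{eq:roquS} carries essentially all the difficulty.

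For \eqref{eq:liminf} I would simply note that $\sum_{j\ge k}1/\mu^{(\al)}_j\ge\sum_{j\ge 2k}1/\mu^{(\al)}_j$ for every $k$, so the $\liminf$ in \eqref{eq:liminf} dominates termwise the one in \eqref{eq:liminf2}; hence \eqref{eq:liminf2} yields \eqref{eq:liminf} with the same choice of $\be$. For \eqref{eq:roquS} I would first put $(S^{(\be)}_j)^{1/j}$ in closed form. Since $S^{(\be)}_j=\prod_{i=1}^j\si^{(\be)}_i$ and $\si^{(\be)}_i=\ta^{(\be)}_1\, i/\ta^{(\be)}_i$, one gets $S^{(\be)}_j=(\ta^{(\be)}_1)^j\, j!\,\big(\prod_{i=1}^j\ta^{(\be)}_i\big)^{-1}$, so that
\[
  (S^{(\be)}_j)^{1/j}=\ta^{(\be)}_1\,\frac{(j!)^{1/j}}{\big(\prod_{i=1}^j\ta^{(\be)}_i\big)^{1/j}}.
\]
Using Stirling, $(j!)^{1/j}\asymp j$, and absorbing the constants $\ta^{(\al)}_1,\ta^{(\be)}_1$, the target $\si^{(\al)}_j=\ta^{(\al)}_1\, j/\ta^{(\al)}_j\le A\,(S^{(\be)}_j)^{1/j}$ reduces to the single estimate $\big(\prod_{i=1}^j\ta^{(\be)}_i\big)^{1/j}\le C\,\ta^{(\al)}_j$. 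The Stirling factor $j$ cancels the $j$ in the numerator of $\si^{(\al)}_j$, which is exactly why one must use this exact closed form rather than a crude geometric-mean bound such as $(S^{(\be)}_j)^{1/j}\ge(\si^{(\be)}_{\lceil j/2\rceil})^{1/2}$, which loses a square root and an index halving and turns out too weak to be fed by \eqref{eq:invmg}.

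Next I would feed \eqref{eq:invmg} into this product. From \eqref{eq:invmg} one has $\mu^{(\be)}_i\ge(\mu^{(\al)}_{\lfloor i/2\rfloor})^2/A$, and combined with $\lfloor i/2\rfloor/\mu^{(\al)}_{\lfloor i/2\rfloor}\le\ta^{(\al)}_{\lfloor i/2\rfloor}$ this gives $i/\mu^{(\be)}_i\lesssim\tfrac1i(\ta^{(\al)}_{\lfloor i/2\rfloor})^2$; for the tail, summing the squared reciprocals via $\sum_{m\ge M}(\mu^{(\al)}_m)^{-2}\le(\mu^{(\al)}_M)^{-1}\sum_{m\ge M}(\mu^{(\al)}_m)^{-1}$ produces the same bound, so altogether $\ta^{(\be)}_i\lesssim\tfrac1i(\ta^{(\al)}_{\lfloor i/2\rfloor})^2$. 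Substituting this into $\prod_{i=1}^j\ta^{(\be)}_i$ and cancelling $\prod_{i=1}^j i=j!$ against the Stirling factor, the estimate collapses to a statement about the single sequence $\ta^{(\al)}$, of the form
\[
  \Big(\prod_{m=1}^{\lfloor j/2\rfloor}\ta^{(\al)}_m\Big)^{1/\lfloor j/2\rfloor}\lesssim\big(j\,\ta^{(\al)}_j\big)^{1/2}.
\]

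The main obstacle is precisely this geometric-mean estimate, and it is where \eqref{eq:liminf2} is indispensable: as a lower bound on the tails $\sum_{j\ge 2k}1/\mu^{(\al)}_j$ it forbids $\ta^{(\al)}$ from decaying too fast (a geometric decay of $\ta^{(\al)}$ already violates \eqref{eq:liminf2}), which is exactly what keeps the geometric mean of the large early terms $\ta^{(\al)}_1,\dots,\ta^{(\al)}_{\lfloor j/2\rfloor}$ from overwhelming the small late term $\ta^{(\al)}_j$; under merely polynomial decay the inequality holds comfortably. Concretely I would bound $\ta^{(\al)}_m$ below by $\gtrsim m/\mu^{(\be)}_m$ using \eqref{eq:liminf2} and above through its defining sum, and then compare the logarithmic average $\tfrac1N\sum_{m\le N}\log\ta^{(\al)}_m$ with $\log\ta^{(\al)}_{2N}$. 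The delicate bookkeeping is reconciling the three independent index-doublings—from \eqref{eq:invmg}, from \eqref{eq:liminf2}, and from the $\lfloor i/2\rfloor$ inside the product—while using the monotonicity $\mu^{(\al)}\le\mu^{(\be)}$; chaining the matrix members produced by \eqref{eq:invmg} and \eqref{eq:liminf2} and enlarging $\be$ by monotonicity finally yields \eqref{eq:roquS}.
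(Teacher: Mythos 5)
Your treatment of \eqref{eq:liminf} and your closed form $(S^{(\be)}_j)^{1/j}=\ta^{(\be)}_1 (j!)^{1/j}\big(\prod_{i\le j}\ta^{(\be)}_i\big)^{-1/j}$, hence the reduction of \eqref{eq:roquS} to a bound of the form $\prod_{i\le j}\ta^{(\be)}_i\le \widetilde A^j(\ta^{(\al)}_j)^j$, agree with the paper. But the way you then deploy the two hypotheses leads to a dead end. You fix $\be$ as the parameter produced by \eqref{eq:invmg}, use \eqref{eq:invmg} only in the direction $\ta^{(\be)}_i\lesssim \tfrac1i(\ta^{(\al)}_{\lfloor i/2\rfloor})^2$, and thereby reduce everything to the single-row inequality $\big(\prod_{m\le \lfloor j/2\rfloor}\ta^{(\al)}_m\big)^{1/\lfloor j/2\rfloor}\lesssim (j\ta^{(\al)}_j)^{1/2}$. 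That inequality is \emph{false} for admissible matrices: take $\mu^{(\al)}_k=e^{\al k}$, which satisfies $\mu^{(\al)}\le\mu^{(\be)}$ for $\al\le\be$, non-quasianalyticity, \eqref{eq:liminf2} (with $\be=3\al$), and \eqref{eq:invmg} (with $\be=\al$, $A=1$). There $\ta^{(\al)}_m\asymp m e^{-\al m}$, so with $j=2N$ the left-hand side is $\asymp N e^{-\al N/2}$ while the right-hand side is $\asymp N e^{-\al N}$, and the ratio blows up like $e^{\al N/2}$. The underlying misconception is your reading of \eqref{eq:liminf2}: it is a \emph{mixed} condition between different rows of the matrix and does not prevent a single $\ta^{(\al)}$ from decaying geometrically (it only would for a constant matrix), so it cannot rescue a geometric-mean-versus-endpoint estimate for one fixed row. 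Since $\ta^{(\al)}$ is decreasing, the geometric mean of its first $j$ terms can never be dominated by $\ta^{(\al)}_j$ up to the available factors; the existential quantifier over $\be$ in \eqref{eq:roquS} is essential and must move you to a genuinely smaller row.

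The paper's proof uses the hypotheses in the opposite directions. From \eqref{eq:liminf2} it extracts the cross-parameter doubling $\ta^{(\be)}_j\le B\ta^{(\al)}_{2j}$, and from \eqref{eq:invmg} the \emph{lower} bound $(\ta^{(\be)}_j)^2\ge \tfrac1{2A}\ta^{(\ga)}_{2j}$ (via $j^2/(\mu^{(\be)}_j)^2+\sum_{k\ge j}(\mu^{(\be)}_k)^{-2}\gtrsim \ta^{(\ga)}_{2j}$). The square produced by the second bound is then absorbed by the pairing $\ta^{(\al)}_1\cdots\ta^{(\al)}_{2j}\ge(\ta^{(\al)}_2\ta^{(\al)}_4\cdots\ta^{(\al)}_{2j})^2$ together with $\ta^{(\al)}_1\cdots\ta^{(\al)}_{2j}\le(\ta^{(\al)}_1)^j(\ta^{(\al)}_j)^j$, yielding
\[
\ta^{(\de)}_1\cdots\ta^{(\de)}_j\ \le\ (2AD B^2)^j\,(\ta^{(\al)}_1)^j(\ta^{(\al)}_j)^j
\]
for a parameter $\de$ obtained by chaining $\al\to\be\to\ga\to\de$ through the existential quantifiers. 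Note that both products in the pairing step run over the full range up to $2j$, so no comparison of a first-half geometric mean with an endpoint value is ever needed. To repair your argument you would have to replace your final reduced inequality by this chaining mechanism; as it stands, the step is not merely unproved but unprovable.
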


\begin{proof}
   Obviously, \eqref{eq:liminf2} implies \eqref{eq:liminf}.
   Next we claim that \eqref{eq:liminf2} implies
   \begin{equation} \label{eq:first}
      \A \al>0 \E \be>0 \E B\ge 1 \A j \in \N : \ta^{(\be)}_j \le B \ta^{(\al)}_{2j}.
   \end{equation}
   Indeed,
   \begin{align*}
      \ta^{(\be)}_j &= \frac{j}{\mu^{(\be)}_j}+\sum_{k\ge 2j}\frac{1}{\mu^{(\be)}_k} + \sum_{j \le k< 2j}\frac{1}{\mu^{(\be)}_k}
      \le \frac{2j}{\mu^{(\be)}_j}+\sum_{k\ge 2j}\frac{1}{\mu^{(\be)}_k}
      \\
      &\le C \sum_{k\ge 2j}\frac{1}{\mu^{(\al)}_k} + \sum_{k\ge 2j}\frac{1}{\mu^{(\be)}_k}
      \le (C+1) \sum_{k\ge 2j}\frac{1}{\mu^{(\al)}_k} \le (C+1) \ta^{(\al)}_{2j}.
  \end{align*}
  Consequently, since $\ta^{(\al)}_j$ is decreasing,
  \begin{align*}
    B^{2j} (\ta^{(\al)}_{1})^j (\ta^{(\al)}_{j})^j
     &\ge B^{2j} \ta^{(\al)}_{1} \cdots \ta^{(\al)}_{2j}
     \\
     &\ge B^{2j} (\ta^{(\al)}_{2})^2 (\ta^{(\al)}_{4})^2  \cdots (\ta^{(\al)}_{2j})^2
     \ge  (\ta^{(\be)}_1 \cdots \ta^{(\be)}_j)^2.
  \end{align*}
   Now \eqref{eq:invmg} implies that there exist $\ga>0$ and $A\ge 1$ such that
   \begin{align*}
      (\ta^{(\be)}_j)^2 \ge \frac{j^2}{(\mu^{(\be)}_j)^2} + \sum_{k \ge j} \frac{1}{(\mu^{(\be)}_k)^2}
      &\ge \frac{1}{2A} \frac{2j}{\mu^{(\ga)}_{2j}} + \frac{1}{A} \sum_{k \ge j} \frac{1}{\mu^{(\ga)}_{2k}}
      \\
      &\ge \frac{1}{2A} \Big( \frac{2j}{\mu^{(\ga)}_{2j}} +  \sum_{k \ge 2j} \frac{1}{\mu^{(\ga)}_{k}}\Big)
      = \frac{1}{2A} \ta^{(\ga)}_{2j}.
   \end{align*}
   In view of \eqref{eq:first} there exist $\de>0$ and $D\ge 1$ such that $\ta^{(\ga)}_{2j} \ge\frac{1}{D} \ta^{(\de)}_{j}$.
   Thus
   \begin{align*}
      B^{2j} (\ta^{(\al)}_{1})^j (\ta^{(\al)}_{j})^j
      &\ge (\ta^{(\be)}_1 \cdots \ta^{(\be)}_j)^2
      \ge (2AD)^{-j} \ta^{(\de)}_{1} \cdots \ta^{(\de)}_{j}.
   \end{align*}
   It is easy to see that
   that $\si^{(\al)}_j \le A (S^{(\de)}_j)^{1/j}$ is equivalent to
   \begin{equation*}
      \ta^{(\de)}_1 \cdots \ta^{(\de)}_j \le \widetilde A^j (\ta^{(\al)}_j)^j
   \end{equation*}
   and hence the statement is proved.
\end{proof}

\section{Classical cases} \label{sec:classical}

\subsection{Braun--Meise--Taylor classes}

The goal of this section is to show that,
for any non-quasianalytic weight function $\om$,
\[
 \La^{\{\ka\}} = \La^{\{\fK\}} = \La^{\{\fQ\}} = \La^{\{\ul \fL\}},
\]
where $\ka=\ka_\om$ and the families $\fK$, $\fQ$, and $\fL$ are derived from $\fM = \fM_\om$.

\begin{proposition} \label{prop:kafK}
  Let $\om$ be a non-quasianalytic weight function, $\ka=\ka_\om$, and $\fK = \fK(\fM_\om)$.
  Then $\La^{\{\ka\}} = \La^{\{\fK\}}$.
\end{proposition}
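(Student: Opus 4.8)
The plan is to reduce the claim to an R-equivalence of weight matrices. Since $\ka = \ka_\om$ is a (concave) weight function, \Cref{omegaproperties} gives $\La^{\{\ka\}} = \La^{\{\fM_\ka\}}$, where $\fM_\ka = \{M^{(x)}_\ka\}$ has entries $(M^{(x)}_\ka)_k = \exp(\tfrac1x \vh^*_\ka(xk))$. It therefore suffices to prove that $\fK = \fK(\fM_\om)$ and $\fM_\ka$ are R-equivalent, and in fact I expect the stronger member-by-member equivalence $K^{(\al)} \sim M^{(\al)}_\ka$ for each $\al>0$, with the parameter $\al$ on the left matching $x=\al$ on the right.

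The heart of the matter is to compare the weight function $\ka_\al = \ka_{\widetilde \om_{M^{(\al)}}}$ defining $K^{(\al)}$ with $\tfrac1\al \ka$. First I would establish the elementary two-sided bound
\[
   \tfrac1\al \om(t) - \log t \le \om_{M^{(\al)}}(t) \le \tfrac1\al \om(t), \quad t \ge 1,
\]
where the upper bound is immediate from $\al\,\om_{M^{(\al)}}(t) = \sup_k(\al k \log t - \vh^*_\om(\al k)) \le \sup_{u\ge0}(u\log t - \vh^*_\om(u)) = \vh_\om(\log t) = \om(t)$, and the lower bound comes from evaluating the same supremum at the grid point $\al k$ just below the continuous maximizer, the loss being at most $\log t$. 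Since $\widetilde \om_{M^{(\al)}} = \om_{M^{(\al)}} + \log(1+t^2)$, and since $\nu \mapsto \ka_\nu$ is additive and sends $\log(1+t^2)$ to a term of order $\log t$, integrating against $ds/s^2$ yields
\[
   \ka_\al(t) = \tfrac1\al \ka(t) + O(\log t),
\]
with the $O(\log t)$ uniform in $\al$; here I use that $\ka_{c\om} = c\,\ka_\om$ and that $\ka(t) \gg \log t$.

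With this in hand I would translate back to sequences through the identities $K^{(\al)}_k = \sup_{t>0} t^k e^{-\ka_\al(t)}$ and $(M^{(\al)}_\ka)_k = \exp(\tfrac1\al \vh^*_\ka(\al k)) = \sup_{t>0} t^k e^{-\frac1\al \ka(t)}$, the latter using the scaling identity $\vh^*_{\frac1\al \ka}(k) = \tfrac1\al \vh^*_\ka(\al k)$. The additive error $\ka_\al(t) - \tfrac1\al\ka(t) = O(\log t)$ contributes a factor $t^{O(1)}$ inside the suprema, which shifts the index by a bounded amount and hence only perturbs $K^{(\al)}$ and $M^{(\al)}_\ka$ by factors of the type absorbed in the inductive limits defining the $\La$-spaces. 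This gives $K^{(\al)} \sim M^{(\al)}_\ka$, hence $\fK$ and $\fM_\ka$ are R-equivalent and $\La^{\{\fK\}} = \La^{\{\fM_\ka\}} = \La^{\{\ka\}}$. Alternatively, since $\fM_\om$ has R-moderate growth one may route through $\La^{\{\fK\}} = \La^{\{\fQ\}}$ from \Cref{thm:moment} together with the estimate $P_\al(ir) \sim \ka_\al(r)$ contained in \eqref{lem:BMT}, which produces the same $\ka_\al$.

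The main obstacle I anticipate is the bookkeeping in the last step: making sure the $O(\log t)$ discrepancy, equivalently the polynomial $(1+t^2)$-type factors, is genuinely absorbed uniformly in $\al$, so that one obtains an R-equivalence of matrices and not merely a family of individual equivalences with $\al$-dependent constants. The role of the regularizing summand $\log(1+t^2)$ in $\widetilde \om_{M^{(\al)}}$ is precisely to keep these corrections non-negative and of controlled order, and verifying that the inductive-limit structure of $\La^{\{\ka\}}$ and $\La^{\{\fK\}}$ is insensitive to them is where the care is needed.
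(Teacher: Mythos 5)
Your argument is correct and follows essentially the same route as the paper: both reduce the claim to an R-equivalence between $\fK$ and the matrix $\fM_\ka$ associated with $\ka$, via the observation that $\ka_\al=\ka_{\widetilde\om_{M^{(\al)}}}$ is equivalent to $\ka$ (the paper quotes the multiplicative equivalence $\om_{M^{(\al)}}\sim\om$ to get $\ka_\al\le C\ka+C$, while you rederive a sharper additive version $\ka_\al=\tfrac1\al\ka+O(\log t)$), and both then transfer this to the sequences through the $\sup_t t^k e^{-\ka_\al(t)}$ representation. The only point to make explicit is that the resulting index perturbation is absorbed by the R-moderate growth of $\fK$ (\Cref{lem:mgK}) and of $\fM_\ka$ --- i.e.\ by moving to another member of the matrix, not by the $\si^k$-scaling within a single sequence --- which is exactly how the paper concludes.
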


\begin{proof}
  Now $\ka$ and $\ka_\al$ are equivalent for all $\al>0$, since $\om$ and $\om_{M^{(\al)}}$ are, by \cite[Lemma 5.7]{RainerSchindl12}.
  So there is $C\in \N_{\ge 1}$ such that $\ka_\al \le C \ka +C$.
  Let $\fH = \fH_{\ka} = \{H^{(\al)}: \al>0\}$ be the weight matrix associated with $\ka$ (cf.\ \Cref{omegaproperties}).
  Then, for $\al =1$, assuming that $\ka$ and $\ka_1$ are normalized,
  \begin{align*}
     H^{(1)}_j = \exp(\vh^*_{\ka}(j)) = \sup_{t \ge 0} \frac{t^j}{\exp(\ka(t))} \le e \sup_{t \ge 0} \frac{t^j}{\exp(\frac{1}{C}\ka_1(t))}
     = e (K^{(1)}_{Cj})^{1/C}.
  \end{align*}
  For $x \in \N_{\ge 1}$, we conclude $H^{(x)}_j = (H^{(1)}_{xj})^{1/x} \le e^{1/x} (K^{(1)}_{Cxj})^{1/(Cx)}$
  and thus $\fH \{\preceq\} \fK$,
  since $\fK$ has R-moderate growth.

  We also have $\ka \le C \ka_\al +C$ for some $C\in \N_{\ge 1}$ so that an analogous computation gives
  $K^{(\al)}_j \le e (H^{(1)}_{Cj})^{1/C}$ and hence
  $\fK \{\preceq\} \fH$,
  since $\fH$ has R-moderate growth.
\end{proof}

As a technical tool we will associate with a weight sequence $M$ and a positive integer $n$
the weight sequence $M^{[n]}$ defined by $M^{[n]}_j := M_{nj}^{1/n}$.
Note that
\[
  \mu^{[n]}_j := \frac{M^{[n]}_j}{M^{[n]}_{j-1}} = (\mu_{n(j-1)+1}\cdots \mu_{nj})^{1/n}
\]
satisfies $\mu_{2j} \le A \mu^{[4]}_j$ for some constant $A\ge 1$, which follows easily from the fact that $\mu$ is increasing.
We always have $M \le M^{[n]}$, and $M^{[n]} \preceq M$ holds provided that $M$ has moderate growth.
If $M$ is non-quasianalytic, so is $M^{[n]}$.

\begin{lemma}\label{SVshifted}
Let $M$ be a non-quasianalytic weight sequence and $M'$ a positive sequence. Then
$M' \prec_{SV} M$ implies
${M'}^{[n]} \prec_{SV} M^{[4n]}$ for all positive integers $n$.
\end{lemma}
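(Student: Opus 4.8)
The plan is to verify \eqref{eq:SV} for the pair $({M'}^{[n]}, M^{[4n]})$ directly, keeping the witnessing integer $s$ from $M' \prec_{SV} M$ unchanged. Write $N := M^{[4n]}$, $N' := {M'}^{[n]}$, so that $N'_j = (M'_{nj})^{1/n}$, $N_i = (M_{4ni})^{1/(4n)}$, and $\nu_k := \mu^{[4n]}_k = (\mu_{4n(k-1)+1}\cdots\mu_{4nk})^{1/(4n)}$. Since $M$ is a non-quasianalytic weight sequence, so is $N$ (as recorded in the excerpt), so the quantity in \eqref{eq:SV} for $(N',N)$ is well defined and its tails converge. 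Let $s \in \N_{\ge 1}$ and $C<\infty$ be such that
\[
\frac 1m \sup_{0 \le i' < m}\Big(\frac{M'_m}{s^m M_{i'}}\Big)^{1/(m-i')} \sum_{k \ge m} \frac{1}{\mu_k} \le C \qquad (m \ge 1).
\]
The idea is to bound the $j$-th term of the SV quantity for $(N',N)$ by a constant multiple of the $m$-th term of the SV quantity for $(M',M)$ at the matched index $m = nj$. This requires a comparison of the ``ratio factors'' and a comparison of the ``tail sums''.

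For the ratio factor set $R(j) := \sup_{0 \le i' < nj}\big(\frac{M'_{nj}}{s^{nj}M_{i'}}\big)^{1/(nj-i')}$. I claim
\[
\sup_{0 \le i < j}\Big(\frac{N'_j}{s^j N_i}\Big)^{1/(j-i)} \le R(j).
\]
Fix $0 \le i < j$ and use the competitor $i' = ni < nj$ in $R(j)$, which gives $M'_{nj} \le s^{nj} M_{ni}\, R(j)^{\,n(j-i)}$ and hence $(M'_{nj})^{1/n} \le s^j (M_{ni})^{1/n} R(j)^{\,j-i}$. Because $M$ is log-convex, $k \mapsto M_k^{1/k}$ is increasing, so from $ni \le 4ni$ we get $(M_{ni})^{1/n} \le (M_{4ni})^{1/(4n)} = N_i$ (both sides being $1$ when $i=0$). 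Substituting yields $N'_j = (M'_{nj})^{1/n} \le s^j N_i R(j)^{\,j-i}$, i.e.\ $\big(N'_j/(s^j N_i)\big)^{1/(j-i)} \le R(j)$, uniformly in $i$. This is the heart of the argument, and I expect it to be the main point: matching the index $nj$ and choosing the competitor $ni$ converts the $M'$-numerator exactly, while monotonicity of $M_k^{1/k}$ is precisely what lets $(M_{ni})^{1/n}$ be absorbed into the larger $N_i=(M_{4ni})^{1/(4n)}$, so the extra room afforded by the factor $4$ is harmless here and the same $s$ survives.

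For the tail sums the factor $4$ does the work. Since $\mu$ is increasing, $\nu_k \ge \mu_{4n(k-1)+1}$, and the $4n$ entries $\mu_\ell^{-1}$ with $4n(k-2)+1 \le \ell \le 4n(k-1)$ are each at least $\mu_{4n(k-1)+1}^{-1}$; hence $\nu_k^{-1} \le \frac1{4n}\sum_{\ell=4n(k-2)+1}^{4n(k-1)} \mu_\ell^{-1}$. Summing these disjoint consecutive blocks over $k \ge j$ gives, for $j \ge 3$,
\[
\sum_{k \ge j} \frac1{\nu_k} \le \frac1{4n}\sum_{\ell \ge 4n(j-2)+1}\frac1{\mu_\ell} \le \frac1{4n}\sum_{\ell \ge nj}\frac1{\mu_\ell},
\]
the last step because $4n(j-2)+1 \ge nj$ once $j \ge 3$; this is exactly where the coefficient $4$ (rather than $1$) in $M^{[4n]}$ is needed, to pull the starting index of the $\mu$-tail down from $\approx 4nj$ to $nj$ after the one-block shift.

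Finally I combine. Rearranging the hypothesis at $m=nj$ gives $R(j) \le C\,nj\big/\sum_{\ell\ge nj}\mu_\ell^{-1}$, so for $j\ge 3$ the $j$-th term of the SV quantity for $(N',N)$ is at most
\[
\frac1j\, R(j)\,\frac1{4n}\sum_{\ell \ge nj}\frac1{\mu_\ell} \le \frac1j\cdot\frac{C\,nj}{\sum_{\ell \ge nj}\mu_\ell^{-1}}\cdot\frac1{4n}\sum_{\ell \ge nj}\frac1{\mu_\ell} = \frac C4.
\]
The finitely many terms $j=1,2$ are finite because $N$ is non-quasianalytic (so $\sum_{k\ge j}\nu_k^{-1}<\infty$) and each ratio factor is finite. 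Taking the supremum over $j\ge 1$ shows the quantity in \eqref{eq:SV} for $({M'}^{[n]}, M^{[4n]})$ is finite with the same $s$, i.e.\ ${M'}^{[n]} \prec_{SV} M^{[4n]}$.
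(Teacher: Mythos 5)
Your proof is correct and follows essentially the same route as the paper's: you keep the same witness $s$, match the index $m=nj$, use the competitor $i'=ni$ together with the monotonicity of $M_k^{1/k}$ (equivalently $M\le M^{[4]}$) for the ratio factor, and exploit the factor $4$ to push the starting index of the $\mu$-tail below $nj$. The only cosmetic difference is in the tail estimate, where you bound $1/\mu^{[4n]}_k$ by a block average of the $1/\mu_\ell$ (getting the range $j\ge 3$ with no extra constant) whereas the paper uses $\mu_{2j}\le A\mu^{[4]}_j$ to compare with a single term; both yield the same conclusion.
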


\begin{proof}
For all $0\le i<j$,
\begin{align*}
  \Big(\frac{{M'}^{[n]}_j}{s^j M^{[4n]}_{i}}\Big)^{\frac1{j-i}}
  &=
  \Big(\frac{M'_{nj}}{s^{nj} M_{ni}^{[4]}}\Big)^{\frac1{n(j-i)}}
  \le
  \Big(\frac{M'_{nj}}{s^{nj} M_{ni}}\Big)^{\frac1{n(j-i)}}
  \le \sup_{0 \le i < nj} \Big(\frac{M'_{nj}}{s^{nj} M_{i}}\Big)^{\frac1{nj-i}}.
\end{align*}
Moreover,
\[
\mu^{[4n]}_k=(\mu^{[4]}_{n(k-1)+1}\cdots\mu^{[4]}_{nk})^{1/n}
\ge \mu^{[4]}_{n(k-1)} \ge A^{-1} \mu_{2n(k-1)}
\]
so that, for $j \ge 2$,
\[
\sum_{k\ge j}\frac{1}{\mu^{[4n]}_k} \le A\sum_{k\ge nj}\frac{1}{\mu_k}.
\]
The assertion follows.
\end{proof}

\begin{proposition} \label{prop:omSVred}
  Let $\fM$ be a non-quasianalytic weight matrix of R-moderate growth and
  $\fM'$ a one-parameter family of positive sequences such that
  $\liminf_{k \to \infty} (M'_k/k!)^{1/k} >0$ for all $M' \in \fM'$.
  Assume that
  \begin{equation} \label{eq:SVred}
    \E \widehat M' \in \fM' \A M' \in \fM' \E n \in \N_{\ge 1} : M' \preceq (\widehat M')^{[n]}.
  \end{equation}
  If there is $\widehat M \in \fM$ such that $\widehat M' \prec_{SV} \widehat M$, then
  $\La^{\{\fM'\}} \subseteq j^\infty \cE^{\{\fM\}}(\R)$.
\end{proposition}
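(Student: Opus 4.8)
The plan is to reduce everything to \Cref{thm:DCSV} by means of the reduction inequality recorded in \Cref{SVshifted}, the point being to absorb the shifted sequences $M^{[n]}$ back into the matrices $\fM'$ and $\fM$. I would fix an arbitrary $M' \in \fM'$ and prove the single inclusion $\La^{\{M'\}} \subseteq j^\infty \cE^{\{\fM\}}(\R)$; since $\La^{\{\fM'\}}$ is the union of the spaces $\La^{\{M'\}}$ over $M' \in \fM'$, passing to the union then yields the assertion. Throughout, $\widehat M'$ and $\widehat M$ denote the sequences furnished by \eqref{eq:SVred} and by the hypothesis $\widehat M' \prec_{SV} \widehat M$.

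First I would use \eqref{eq:SVred} to pick $n \in \N_{\ge 1}$ with $M' \preceq (\widehat M')^{[n]}$, so that $\La^{\{M'\}} \subseteq \La^{\{(\widehat M')^{[n]}\}}$. Applying \Cref{SVshifted} to $\widehat M' \prec_{SV} \widehat M$ gives $(\widehat M')^{[n]} \prec_{SV} (\widehat M)^{[4n]}$. To invoke \Cref{thm:DCSV} with this relation I must check that $(\widehat M')^{[n]}$ still satisfies the liminf condition. This follows from $\liminf_{k}(\widehat M'_k/k!)^{1/k}>0$ and the elementary bound $(nk)! \ge (k!)^n$: indeed
\[
   \Big(\frac{(\widehat M')^{[n]}_k}{k!}\Big)^{1/k} = \Big(\frac{(\widehat M'_{nk})^{1/n}}{k!}\Big)^{1/k} \ge \Big(\frac{\widehat M'_{nk}}{(nk)!}\Big)^{1/(nk)},
\]
and the right-hand side has $\liminf$ over $k$ bounded below by $\liminf_m(\widehat M'_m/m!)^{1/m}>0$. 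Since $\widehat M$ is non-quasianalytic and hence so is the weight sequence $(\widehat M)^{[4n]}$, \Cref{thm:DCSV} yields $\La^{\{(\widehat M')^{[n]}\}} \subseteq j^\infty \cE^{\{(\widehat M)^{[4n]}\}}(\R)$.

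The main step is to absorb $(\widehat M)^{[4n]}$ back into $\fM$, i.e.\ to produce $N \in \fM$ with $(\widehat M)^{[4n]} \preceq N$. Because individual members of $\fM$ need not have moderate growth, one cannot expect $(\widehat M)^{[4n]} \preceq \widehat M$; instead I would iterate R-moderate growth. A straightforward induction on $p$ shows: for every $M \in \fM$ and every $p \in \N_{\ge 1}$ there are $N \in \fM$ and $C\ge 1$ with $M_{pj} \le C^{pj}(N_j)^p$ for all $j$. The inductive step writes $M_{(p+1)j} = M_{pj+j} \le C_1^{(p+1)j} N^{(1)}_{pj} N^{(1)}_j$ by R-moderate growth, estimates $N^{(1)}_{pj}$ by the inductive hypothesis, and collapses the finitely many matrix members that appear into a single larger one, using that $\fM$ is totally ordered. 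Taking $p = 4n$ gives $(\widehat M)^{[4n]}_j = (\widehat M_{4nj})^{1/(4n)} \le C^j N_j$, that is $(\widehat M)^{[4n]} \preceq N$.

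Finally I would chain the inclusions. From $(\widehat M)^{[4n]} \preceq N$ we get $\cE^{\{(\widehat M)^{[4n]}\}}(\R) \subseteq \cE^{\{N\}}(\R)$, whence
\begin{multline*}
   \La^{\{M'\}} \subseteq \La^{\{(\widehat M')^{[n]}\}} \subseteq j^\infty \cE^{\{(\widehat M)^{[4n]}\}}(\R)
   \\
   \subseteq j^\infty \cE^{\{N\}}(\R) \subseteq j^\infty \cE^{\{\fM\}}(\R),
\end{multline*}
the last inclusion because $N \in \fM$. Taking the union over all $M' \in \fM'$ proves $\La^{\{\fM'\}} \subseteq j^\infty \cE^{\{\fM\}}(\R)$. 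The only genuinely delicate point is the iterated R-moderate growth bound in the third paragraph; the remaining steps are bookkeeping with the order relations and space inclusions collected in \Cref{sec:weights}.
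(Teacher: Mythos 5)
Your proposal is correct and follows essentially the same route as the paper's proof: reduce via \eqref{eq:SVred} to $(\widehat M')^{[n]}$, apply \Cref{SVshifted} and \Cref{thm:DCSV} to land in $\cE^{\{\widehat M^{[4n]}\}}(\R)$, and absorb $\widehat M^{[4n]}$ into $\fM$ using R-moderate growth. The paper states the last absorption step and the liminf hypothesis check without detail; your iteration of R-moderate growth and the bound $(nk)!\ge (k!)^n$ correctly supply those details.
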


\begin{proof}
    Let $a \in\Lambda^{\{\fM'\}}$. Then $a \in\Lambda^{\{M'\}}$ for some $M' \in \fM'$.
    By \eqref{eq:SVred}, we find $n\ge 1$
    such that
    $M' \preceq (\widehat M')^{[n]}$.
    Consequently, $a \in\Lambda^{\{(\widehat M')^{[n]}\}}$.
    By \Cref{SVshifted}, we have $(\widehat M')^{[n]} \prec_{SV} \widehat M^{[4n]}$.
    We may conclude that $a \in j^\infty \cE^{\{\widehat M^{[4n]}\}}(\R)$; cf.\ \Cref{thm:DCSV}.
    Since $\fM$ has R-moderate growth,
    $\cE^{\{\widehat M^{[4n]}\}}(\R) \subseteq \cE^{\{\fM\}}(\R)$.
\end{proof}

\begin{lemma} \label{lem:omSVred}
  If $\om$ is a pre-weight function, then $\fM = \fM_\om$ satisfies \eqref{eq:SVred}.
\end{lemma}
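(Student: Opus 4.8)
The plan is to read \eqref{eq:SVred} with $\fM_\om$ in the role of the family $\fM'$: I must exhibit one distinguished sequence $\widehat M' \in \fM_\om$ whose iterated powers $(\widehat M')^{[n]}$ dominate every member of $\fM_\om = \{M^{(x)} : x>0\}$. I would exploit the explicit form $M^{(x)}_k = \exp\big(\tfrac1x \vh_\om^*(xk)\big)$ from \eqref{eq:asswm} together with the monotonicity $M^{(x)} \le M^{(y)}$ for $x \le y$, which is part of $\fM_\om$ being a weight matrix by \Cref{omegaproperties}.

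The key step is to record a semigroup-type identity for the operation $M \mapsto M^{[n]}$ on the members of $\fM_\om$. Directly from $M^{[n]}_j = M_{nj}^{1/n}$, for any $x_0>0$ and any positive integer $n$ one computes
\[
  (M^{(x_0)})^{[n]}_j = (M^{(x_0)}_{nj})^{1/n} = \exp\Big(\tfrac{1}{n x_0}\, \vh_\om^*(x_0\, n\, j)\Big) = M^{(n x_0)}_j,
\]
i.e.\ $(M^{(x_0)})^{[n]} = M^{(n x_0)}$. Thus applying the $[n]$-operation to a member of the family merely shifts the parameter by the integer factor $n$.

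With this identity the verification of \eqref{eq:SVred} is immediate. I would set $\widehat M' := M^{(1)}$. Given an arbitrary $M' = M^{(x)} \in \fM_\om$, I choose $n := \lceil x \rceil \in \N_{\ge 1}$, so that $n \ge x$. By the identity $(\widehat M')^{[n]} = M^{(n)}$, and by monotonicity of the family $M^{(x)} \le M^{(n)}$, hence $M' = M^{(x)} \preceq M^{(n)} = (\widehat M')^{[n]}$, as required.

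There is essentially no serious obstacle here beyond getting the parameter bookkeeping right. The only point meriting a little care is that the $[n]$-operation shifts the parameter by an \emph{integer} factor, so one cannot in general hit a prescribed parameter $x$ exactly; this is precisely why I anchor the distinguished sequence at the smallest convenient parameter $x_0 = 1$ and then reach any $M^{(x)}$ from below via the monotonicity of $\fM_\om$, rather than trying to match $x$ on the nose.
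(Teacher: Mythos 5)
Your proof is correct and follows exactly the paper's approach: the paper's entire proof is the identity $M^{(n)} = (M^{(1)})^{[n]}$, which is your semigroup computation specialized to $x_0=1$, with the choice $\widehat M' = M^{(1)}$ and the monotonicity step left implicit. You have merely written out the parameter bookkeeping that the paper suppresses.
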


\begin{proof}
  We have $M^{(n)} = (M^{(1)})^{[n]}$ for all positive integers $n$; cf.\ \eqref{eq:asswm}.
\end{proof}

\begin{theorem} \label{thm:fKfLom}
  Let $\om$ be a non-quasianalytic weight function and $\fM = \fM_\om$.
  Then the derived families $\fK$ and $\ul \fL$ are R-equivalent.
\end{theorem}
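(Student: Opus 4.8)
The relation $\fK \{\preceq\} \ul\fL$ is already contained in \Cref{thm:SKL}, since $\fM = \fM_\om$ is a non-quasianalytic weight matrix of R-moderate growth. The entire content of the theorem is therefore the reverse inclusion $\ul\fL \{\preceq\} \fK$, and my plan is to deduce it from the optimality of $\ka = \ka_\om$ in the Borel problem rather than by a direct comparison of the derived sequences.

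First I would observe that $\La^{\{\ul\fL\}}$ lands in the Borel image of the $\om$-class. Indeed, $\ul L^{(\al)} \le L^{(\al)}$ gives $\ul\fL \{\preceq\} \fL$ and hence $\La^{\{\ul\fL\}} \subseteq \La^{\{\fL\}}$; the first line of \eqref{eq:main1} supplies $\La^{\{\fL\}} \subseteq j^\infty \cE^{\{\fM\}}(\R)$; and $\cE^{\{\fM_\om\}}(\R) = \cE^{\{\om\}}(\R)$ by \Cref{omegaproperties}, because $\om$ is a weight function. Thus $\La^{\{\ul\fL\}} \subseteq j^\infty \cE^{\{\om\}}(\R)$. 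The decisive feature at this point is that every member $\ul L^{(\al)}$ of $\ul\fL$ is log-convex, so its associated function is well behaved; this is exactly the hypothesis under which the membership $\La^{\{\ul L^{(\al)}\}} \subseteq j^\infty \cE^{\{\om\}}(\R)$ can be converted, via the variant of the Bonet--Meise--Taylor optimality result proved in \Cref{appendix}, into $\La^{\{\ul L^{(\al)}\}} \subseteq \La^{\{\ka\}}$. Applying this parameter-by-parameter and passing to the inductive limit over $\al$ yields $\La^{\{\ul\fL\}} \subseteq \La^{\{\ka\}}$. Since $\La^{\{\ka\}} = \La^{\{\fK\}}$ by \Cref{prop:kafK}, this is precisely $\ul\fL \{\preceq\} \fK$, and combining it with $\fK \{\preceq\} \ul\fL$ establishes R-equivalence.

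The main obstacle is the optimality input invoked above: one must show that the optimal weight-function solution $\ka$ still dominates the log-convex sequence data $\ul L^{(\al)}$, which amounts to controlling $\om_{\ul L^{(\al)}}$ against $\ka$. I would isolate this as a single-sequence statement (a log-convex weight sequence $N$ with $\La^{\{N\}} \subseteq j^\infty \cE^{\{\om\}}(\R)$ already satisfies $\La^{\{N\}} \subseteq \La^{\{\ka\}}$) and apply it to each $\ul L^{(\al)}$, rather than attempting to re-derive the harmonic-extension estimates of \eqref{lem:BMT} directly for the $\ul L^{(\al)}$. Log-convexity is genuinely indispensable here: the analogous statement fails for the possibly irregular family $\fL$, and this is the structural reason one must pass from $\fL$ to $\ul\fL$ in \eqref{eq:main2}.
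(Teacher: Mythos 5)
Your overall strategy is the same as the paper's (reduce to $\ul\fL \{\preceq\} \fK$ via \Cref{thm:SKL}, then use the optimality of $\ka$ through \Cref{lem:A1} and \Cref{prop:kafK}), but the central step is left as a gap. \Cref{lem:A1} takes as input a \emph{pre-weight function} $\si$ and the hypothesis $\La^{\{\si\}} \subseteq j^\infty\cE^{\{\om\}}(\R)$. To apply it to the log-convex sequence $N=\ul L^{(\al)}$ you must pass to $\si=\om_{N}$, and the space $\La^{\{\om_{N}\}}=\on{ind}_\si\La^{\om_N}_\si$ is \emph{not} $\La^{\{N\}}=\on{ind}_\rho\La^{N}_\rho$: the seminorms of the former involve the sequences $B^{(\si)}_k=e^{\frac1\si\vh^*_{\om_N}(\si k)}=(B^{(1)}_{\si k})^{1/\si}$, i.e.\ the ramifications $N^{[\si]}$ (up to equivalence), whereas the latter only involves $\rho^k N_k$. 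Since $\om_{N}$ is in general only a pre-weight function, the exponential-absorption property \eqref{newexpabsorb} is unavailable and these two inductive limits need not coincide; in general $\La^{\{\om_N\}}$ is strictly larger. So "$\La^{\{\ul L^{(\al)}\}} \subseteq j^\infty\cE^{\{\om\}}(\R)$ converts, via \Cref{lem:A1}, into $\ul L^{(\al)}\preceq$ something in $\fK$'' is not a legitimate application of the appendix result: you must first show that \emph{all} the ramified sequences $(\ul L^{(\al)})^{[n]}$, $n\in\N_{\ge1}$, also satisfy the Borel inclusion into $\cE^{\{\om\}}(\R)$.

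That is exactly the technical content your proposal omits and the paper supplies: \Cref{SVshifted} shows that $M'\prec_{SV}M$ implies ${M'}^{[n]}\prec_{SV}M^{[4n]}$, \Cref{lem:omSVred} shows that $\fM_\om$ is stable under ramification ($M^{(n)}=(M^{(1)})^{[n]}$, cf.\ \eqref{eq:asswm}), and \Cref{prop:omSVred} combines these to get $\La^{\{\fM_{\om_{\ul L}}\}}\subseteq j^\infty\cE^{\{\fM_\om\}}(\R)$, which is the hypothesis \Cref{lem:A1} actually needs. Without this, your "single-sequence statement'' (a log-convex $N$ with $\La^{\{N\}}\subseteq j^\infty\cE^{\{\om\}}(\R)$ satisfies $\La^{\{N\}}\subseteq\La^{\{\ka\}}$) is essentially equivalent to the theorem you are trying to prove and cannot be invoked as known. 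The rest of your argument (the $\fK\{\preceq\}\ul\fL$ half, the use of \eqref{eq:main1}, and the final identification via \Cref{prop:kafK}) is correct.
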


\begin{proof}
  By \Cref{thm:SKL}, it remains to show $\ul \fL \{\preceq\} \fK$.
  Fix $\ul L \in \ul \fL$ and $M \in \fM$ such that $L = L(M)$.
  We have $\ul L \preceq M$ and thus there is $H \ge 1$ such that
  $\om_{M}(t)\le \omega_{\ul L}(H t)$ for all $t\ge 0$.
  Now $\om_{M}$, being equivalent to $\om$  by \cite[Lemma 5.7]{RainerSchindl12}, is a weight function so that, for some $A \ge 1$,
  \[
    \om_{M}(t) \le  A \omega_{M}(tH^{-1})+A\le A \omega_{\ul L}(t)+A.
  \]
  That means $\omega_{\ul L} \preceq \om$ and thus $\fB \{\preceq\} \fM$, where $\fB = \{B^{(\al)}: \al>0\} = \fM_{\om_{\ul L}}$ is the weight matrix
  associated with $\omega_{\ul L}$ (cf.\ \Cref{omegaproperties}; we do not assume that $\om_{\ul L}$ is normalized). In general,
  $\omega_{\ul L}$ might just be a pre-weight function (not a weight function)
  so that only the inclusion
  $\La^{\{\omega_{\ul L}\}} \subseteq \La^{\{\fB\}}$ is available.
  Observe that $B^{(1)}$ and $\ul L$ are closely related: by \eqref{eq:minorant} and \eqref{eq:asswm}, 
  \begin{align*}
    \ul L_k =  \sup_{t\ge 0} \frac{t^k}{e^{\om_{\ul L}(t)}},
    \quad
     B^{(1)}_k = e^{\vh^*_{\om_{\ul L}}(k)} = \sup_{t\ge 1} \frac{t^k}{e^{\om_{\ul L}(t)}},
  \end{align*}
  and consequently
  \begin{align*}
     B^{(1)}_k \le \ul L_k &=  \max\Big\{\sup_{0\le t\le 1} \frac{t^k}{e^{\om_{\ul L}(t)}}, \sup_{t\ge 1} \frac{t^k}{e^{\om_{\ul L}(t)}}\Big\}
     \\
     &\le \max\Big\{1, \sup_{t\ge 1} \frac{t^k}{e^{\om_{\ul L}(t)}}\Big\} = \max\{1, B^{(1)}_k\}.
  \end{align*}
  Since $B^{(1)}_k \to \infty$, we see that the sequences $B^{(1)}$ and $\ul L$ are equivalent.

  By \Cref{lem:omSVred}, we have $B^{(n)}= (B^{(1)})^{[n]}$.
  Since $L \prec_{SV} M$ and thus $\ul L \prec_{SV} M$ and the sequences $B^{(1)}$ and $\ul L$ are equivalent,
  we find $\La^{\{\fB\}} \subseteq  j^\infty \cE^{\{\fM\}}(\R)$, by \Cref{prop:omSVred}.
  (We have $\omega_{\ul L}(t)= o(t)$ as $t \to \infty$ and consequently $(B^{(n)}_j/j!)^{1/j} \to \infty$ for all $n$, by \Cref{assofuncproper}.)
  Thus $\La^{\{\omega_{\ul L}\}} \subseteq j^\infty \cE^{\{\om\}}(\R)$
  which implies $\omega_{\ul L} \preceq \ka$, by \Cref{lem:A1}.

  It follows that
  $B^{(1)} \in \La^{B^{(1)}}_1 = \La^{\om_{\ul L}}_1 \subseteq  \La^{\{\om_{\ul L}\}} \subseteq \La^{\{\ka\}} = \La^{\{\fK\}}$, by \Cref{prop:kafK},
  and consequently there exists $K \in \fK$ such that $\ul L \preceq K$, since $B^{(1)}$ and $\ul L$ are equivalent.
\end{proof}

Now \eqref{eq:main2} follows from \Cref{prop:KW}, \Cref{prop:kafK}, and \Cref{thm:fKfLom}.

Note that a necessary and sufficient condition for $\om$ being a strong weight function
is that $\fK$, $\fQ$, $\ul \fL$, and $\fL$ are all R-equivalent to $\fM_\om$.

\subsection{Denjoy--Carleman classes}

\begin{theorem}
  Let $M$ be a non-quasianalytic weight sequence of moderate growth.
  Then the derived sequences $S$, $K$, and $L$ are equivalent.
\end{theorem}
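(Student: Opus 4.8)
The plan is to read this single–sequence statement as the ``diagonal'' of the matrix results of \Cref{sec:optimal}, exploiting the fact that moderate growth of $M$ makes the canonically attached matrix degenerate. The backbone is the already–cited equivalence $S \sim L$ valid under moderate growth (\cite[Theorem 3.11]{maximal}, moderate growth being one of the sufficient conditions recorded in \Cref{descendant}, subsuming \eqref{eq:Mmg}). Together with the always–available $S \preceq \ul L \le L$ this reduces everything to sandwiching $K$, i.e.\ to proving $S \preceq K \preceq L$; the chain $S \preceq K \preceq \ul L \preceq L \sim S$ then forces all of $S$, $K$, $\ul L$, $L$ into one equivalence class, giving in particular the equivalence of $S$, $K$, $L$.

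For the upper bound $K \preceq L$ I would set $\om := \om_M$ and work with $\fM := \fM_\om$. Since $M$ has moderate growth, \Cref{assofuncproper} shows that $\om_M$ satisfies \eqref{om6}, so by \Cref{omegaproperties} the matrix $\fM$ is non-quasianalytic of R-moderate growth and all its members $M^{(\al)}$ are pairwise equivalent; moreover $M^{(1)}$ is equivalent to $M$ (by \eqref{eq:asswm} and \eqref{eq:minorant}, $M^{(1)}_k = \sup_{t\ge 1} t^k e^{-\om_M(t)}$ while $\ul M_k = \sup_{t\ge 0} t^k e^{-\om_M(t)}$, and these are equivalent by the normalization argument already run in the proof of \Cref{thm:fKfLom}, with $\ul M = M$ since $M$ is log-convex). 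As the $\ka$-construction is invariant under equivalence of weights, the derived family $\fK$ then consists of sequences all equivalent to $K = K(M)$, so \Cref{thm:moment} collapses to $\La^{\{K\}} \subseteq j^\infty \cE^{\{M\}}(\R)$. Because $(K_k/k!)^{1/k} \to \infty$ by \Cref{lem:fKproperties}, \Cref{thm:DCSV} yields $K \prec_{SV} M$, whence $K \preceq L$ by optimality of $L$; and $K \preceq \ul L$ since $K$ is log-convex.

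For the lower bound $S \preceq K$ I would repeat, for the single sequence, the associated-function computation from the proof of \Cref{thm:SKL}. After rescaling so that $\si \le \mu$ we have $S \prec_{\ga_1} M$ with $S$ a weight sequence, hence $\om_S \prec_{st} \om_M$ by \Cref{descendant}, i.e.\ $\om_S \preceq \ka_{\om_M}$; since $\widetilde\om_M$ is equivalent to $\om_M$, the weight $\ka = \ka_{\widetilde\om_M}$ used to define $K$ satisfies $\ka \sim \ka_{\om_M}$, so $\om_S \preceq \ka$. On the other hand $\om_K \sim \ka$ (as in the proof of \Cref{lem:fKproperties}), so $\om_K \le C\om_S + C$ for some integer $C$, and taking suprema in \eqref{eq:minorant} gives $S_n \le e\,(K_{Cn})^{1/C}$. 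Finally $K$ has moderate growth (the single-sequence incarnation of \Cref{lem:mgK}, or directly: $\ka$ inherits \eqref{om6} from $\widetilde\om_M$), which converts $(K_{Cn})^{1/C}$ back into a multiple of $K_n$ and yields $S \preceq K$. Assembling $S \preceq K \preceq \ul L \preceq L$ with $S \sim L$ completes the argument.

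The main obstacle is not an estimate but the bookkeeping of the reduction: one must check that passing from the single sequence $M$ to the matrix $\fM_{\om_M}$ faithfully reproduces the derived sequences up to equivalence ($M^{(1)} \sim M$ together with equivalence-invariance of the operations $L$, $\ul L$, $K$, the last two respected by \Cref{lem:orderL} and by equivalence of associated functions and of the $\ka$'s), and that R-moderate growth of $M$ is exactly what turns the matrix R-preorders $\{\preceq\}$ into honest $\preceq$ among single sequences. All the genuinely delicate analysis is absorbed into the cited equivalence $S \sim L$; the rest is a careful unwinding of definitions established in \Cref{sec:Carleson} and \Cref{sec:optimal}.
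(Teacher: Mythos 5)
Your proof is correct and follows essentially the same route as the paper: the chain $S \preceq K \preceq \ul L \le L$ comes from the matrix machinery of \Cref{thm:SKL} (which the paper applies directly to the constant weight matrix $\{M\}$, whose R-moderate growth is exactly moderate growth of $M$, so the detour through $\fM_{\om_M}$ and the $M^{(1)}\sim M$ bookkeeping is avoidable), and the reverse inequality $L \preceq S$ is quoted from \cite{maximal} exactly as you do.
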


\begin{proof}
   We have $S \preceq K \preceq L$ by \Cref{thm:SKL}.
   That $L \preceq S$ was shown in \cite[Theorem 3.10]{maximal}.
\end{proof}

\begin{remark}
  The assumption that $M$ has moderate growth can be replaced by the weaker condition
  \begin{equation*}
     \liminf_{k \to \infty} \frac{\mu_k}{k} \sum_{j \ge 2k} \frac{1}{\mu_j}>0,
  \end{equation*}
  which guarantees that $S$ has moderate growth and $L \preceq S$;
  cf.\ \cite[Theorem 3.10]{maximal}.
\end{remark}

\appendix

\section{} \label{appendix}

The goal of this section is to prove the following proposition.
It is due to \cite{BonetMeiseTaylor92} if $\si$ is a weight function.
We will show that it is valid if $\si$ is just a pre-weight function
by slightly modifying the proof of \cite{BonetMeiseTaylor92}.
The proposition is used in this more general form in the proof of \Cref{thm:fKfLom}.

\begin{proposition} \label{lem:A1}
    Let $\om$ be a non-quasianalytic weight function.
    Let $\si$ be a pre-weight function such that $\si(t)=o(t)$ as $t \to \infty$.
    Then $\La^{\{\si\}} \subseteq j^\infty \cE^{\{\om\}}(\R)$ implies $\si \preceq \ka_\om$.
\end{proposition}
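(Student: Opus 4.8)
The plan is to run the Bonet–Meise–Taylor argument from \cite[Lemma 3.3 and surrounding]{BonetMeiseTaylor92}, checking at each step that the weight-function hypotheses on $\si$ are only ever used via properties that survive for a pre-weight function satisfying $\si(t)=o(t)$. First I would record what \eqref{lem:BMT} gives in full generality: since $\om$ is a non-quasianalytic \emph{weight} function, the harmonic extension $P_\om$ is well-defined and comparable to $\ka_\om$ along the imaginary axis, $P_\om(ir) \le \tfrac{4}{\pi}\ka_\om(r) \le 4P_\om(ir)$. This is the object through which $\ka_\om$ will enter; note the $o(t)$ growth of $\si$ is never needed on the $\om$-side.

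The heart of the matter is to turn the hypothesis $\La^{\{\si\}} \subseteq j^\infty \cE^{\{\om\}}(\R)$ into a quantitative growth estimate. The key step is to extract from this inclusion, via a closed-graph / Grothendieck-factorization argument as in the proof of \Cref{thm:SVchar}, a single $\si_0>0$ and constants so that every sequence in $\La^\si_1$ is the jet at $0$ of a function in $\cE^\om_{\si_0}$ on a fixed interval. I would then apply this to cleverly chosen test sequences — the natural candidates are $a_k = $ (suitable power of) $1/\La^\si_k$-type extremizers, i.e.\ sequences designed so that the resulting Cauchy estimates on the derivatives of the interpolating function, combined with a contour/harmonic-measure estimate governed by $P_\om$, force $\vh_\si^*$ to be dominated by something expressible through $P_\om(ir)$ and hence through $\ka_\om(r)$. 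The comparison $e^{\frac{1}{\si}\vh_\si^*(\si k)}$ versus the associated-function description of $\La^{\{\si\}}$ lets me translate the outcome back into the statement $\si \preceq \ka_\om$, i.e.\ $\ka_\om(t) = O(\si(t))$.

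The one place where \cite{BonetMeiseTaylor92} genuinely invokes that $\si$ is a weight function (the doubling condition $\si(2t)=O(\si(t))$, or normalization/convexity used to manipulate $\vh_\si^*$) is where I expect to have to do real work: I would isolate that step and replace the appeal to doubling by a direct estimate that uses only convexity of $\vh_\si$ and the decay $\si(t)=o(t)$. Concretely, $\si(t)=o(t)$ guarantees $\vh_\si^*(x)/x \to \infty$ and that $\La^{\{\si\}}$ strictly contains the analytic germs, which is exactly what is needed for the interpolation/moment construction to have room; so the argument should go through once the doubling step is rephrased as a convexity estimate on $\vh_\si^*$ rather than a growth bound on $\si$ itself.

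Thus I expect the main obstacle to be purely the \emph{book-keeping} of which $\si$-hypotheses are used: the analytic core (harmonic majorization by $P_\om \sim \ka_\om$ plus the factorization extraction) is robust, and the contribution is to verify that weakening $\si$ from a weight function to an $o(t)$ pre-weight function does not break any inequality — in particular that no step secretly relied on $\si(2t)=O(\si(t))$.
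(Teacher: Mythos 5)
Your proposal correctly identifies the bridge $P_\om(ir)\asymp\ka_\om(r)$ from \eqref{lem:BMT} and the general strategy of adapting \cite{BonetMeiseTaylor92}, but there is a genuine gap at the heart of the argument: you never say how the quantitative extension bound actually forces $\ka_\om\preceq\si$. The step ``apply this to cleverly chosen test sequences \dots\ designed so that the resulting Cauchy estimates \dots\ force $\vh_\si^*$ to be dominated by something expressible through $P_\om(ir)$'' is precisely the content of the theorem, and no mechanism is given. The known route --- and the one the paper follows --- is \emph{dual}, not primal: one identifies $(\La^{\{\fM_\si\}})'$ with a weighted space $\cA^0$ of entire functions (\Cref{lem:A3}, resting on the (DFS)-property in \Cref{lem:dfs}), and then, assuming $\si\not\preceq\ka_\om$, one picks $a_j\to\infty$ with $6j\si(a_j)\le P_\om(ia_j)$ and builds from the polynomials of \cite{BonetMeiseTaylor89} functionals $f_j$ with $|f_j(z)|\le Ce^{-\frac{1}{2j}P_\om(ia_j)}e^{\frac mj P_\om(z)}$ but $f_j(ia_j)=e^{\si(a_j)}$; these tend to $0$ in $(\cE^{\{\om\}}(\R))'$ but not in $(\La^{\{\fM_\si\}})'$, contradicting \cite[Corollary 2.2]{BonetMeiseTaylor92}. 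A direct ``closed graph plus test sequences'' argument replacing this duality step is not something you can wave through; if you believe it exists you must exhibit the sequences and the harmonic-measure estimate explicitly.

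You also misdiagnose where the weakening from weight function to pre-weight function actually bites. It is not the doubling condition $\si(2t)=O(\si(t))$ in a manipulation of $\vh_\si^*$; the real issue is that for a mere pre-weight function one only has the inclusion $\La^{\{\si\}}\subseteq\La^{\{\fM_\si\}}$, not equality, so the duality must be carried out for the matrix space $\La^{\{\fM_\si\}}$ rather than for $\La^{\{\si\}}$ itself. The paper handles this in two moves: first it upgrades the hypothesis to $\La^{\{\fM_\si\}}\subseteq j^\infty\cE^{\{\om\}}(\R)$ (this is \eqref{eq:A1}, and it uses that $\om$ --- not $\si$ --- is a weight function, via the scaling property \eqref{newexpabsorb}); then it proves the compactness of the linking maps and the dual identification for an arbitrary pre-weight function $\si$. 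The hypothesis $\si(t)=o(t)$ and $\log t=o(\si(t))$ enter only through the elementary inequalities \eqref{eq:1}--\eqref{eq:2} used to normalize the $f_j$. So the book-keeping you anticipate is not where the work lies, and the analytic core you call ``robust'' is exactly the part your proposal leaves unproved.
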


For a pre-weight function $\si$ we have $\Lambda^{\{\si\}} \subseteq \Lambda^{\{\fM_\si\}}$ (and in general not equality).
But since $\om$ is a weight function, $\La^{\{\si\}} \subseteq j^\infty \cE^{\{\om\}}(\R)$ entails
\begin{equation} \label{eq:A1}
    \Lambda^{\{\fM_\si\}} \subseteq j^\infty \cE^{\{\om\}}(\R).
\end{equation}
Indeed, let $a \in \Lambda^{\{\fM_\si\}}$, i.e., $|a_k| \le C H^{k} e^{\frac{1}{\al}\vh_\si^*(\al k)}$ for some $C,H\ge1$ and $\al>0$.
Then $b = (H^{-k} a_k)_k$ belongs to $\Lambda^{\{\si\}}$.
If $g \in \cE^{\{\om\}}(\R)$ satisfies $j^\infty g=b$, then
$f(x):= g(Hx)$ fulfills $j^\infty f=a$ and belongs to $\cE^{\{\om\}}(\R)$ since $\om$ is a weight function (cf.\ \eqref{newexpabsorb}).

Our goal is now to show that \eqref{eq:A1} implies $\si \preceq \ka_\om$.
The one crucial task is to identify the dual of $\Lambda^{\{\fM_\si\}}$.
Let us write $\fS = \{S^{(\al)} : \al>0\} := \fM_\si$ throughout this section.

\begin{lemma}
  \label{lem:dfs}
  Let $\Lambda^k := \La^{S^{(k)}}_k = \{a \in \C^\N:  \sup_{j\in \N} \frac{|a_j|}{k^jS^{(k)}_j}< \infty \}$ for $k \in \N_{\ge 1}$. Then
  the inclusion
  \[
  \Lambda^k \hookrightarrow \Lambda^{k+1}
  \]
  is compact. In particular $\Lambda^{\{\fM_\si\}} (=\on{ind}_{k\in \N} \Lambda^k )$ is a (DFS)-space (cf. \cite[25.20]{MeiseVogt97}).
\end{lemma}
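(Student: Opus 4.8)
The plan is to show that the inclusion $\Lambda^k \hookrightarrow \Lambda^{k+1}$ factors through a compact operator, from which the (DFS)-property of the inductive limit follows by the standard criterion in \cite[25.20]{MeiseVogt97}. The cleanest route is to exhibit an intermediate Banach space $\Lambda^{k,k+1}$ sitting between $\Lambda^k$ and $\Lambda^{k+1}$ such that the first inclusion is bounded and the composite (or the second inclusion) is compact. A convenient choice is to insert a sequence space with weight strictly between $k^j S^{(k)}_j$ and $(k+1)^j S^{(k+1)}_j$, for instance the weight $w_j := (k+\tfrac12)^j (S^{(k)}_j S^{(k+1)}_j)^{1/2}$, or more simply to prove directly that $\Lambda^k \hookrightarrow \Lambda^{k+1}$ is compact by a diagonal/Montel argument.

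First I would record the key growth fact making the compactness work: since $S^{(k)} \le S^{(k+1)}$ (the defining monotonicity of a weight matrix) and since $(S^{(k)}_j)^{1/j} = \exp(\tfrac1k \vh_\si^*(kj)/j)$ while $\vh_\si^*(x)/x \nearrow \infty$, the quotient
\[
   \frac{k^j S^{(k)}_j}{(k+1)^j S^{(k+1)}_j}
\]
tends to $0$ as $j \to \infty$. Indeed $\big((k+1)/k\big)^j \to \infty$ is more than compensated, but even the ratio $(k+1)^{-j}\cdot k^j$ already forces geometric decay, and $S^{(k)}_j \le S^{(k+1)}_j$ only helps; so the weight quotient decays at least geometrically. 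This is the quantitative input that upgrades boundedness to compactness.

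Second, using this decay I would prove compactness of $\Lambda^k \hookrightarrow \Lambda^{k+1}$ directly. Given a bounded sequence $(a^{(m)})_m$ in $\Lambda^k$, i.e.\ $|a^{(m)}_j| \le R\, k^j S^{(k)}_j$ uniformly in $m$ and $j$, a diagonal argument extracts a subsequence converging coordinatewise to some $a$; the decay of the weight quotient then gives, for the tails $j \ge J$,
\[
   \sup_{j \ge J} \frac{|a^{(m)}_j - a_j|}{(k+1)^j S^{(k+1)}_j}
   \le 2R \sup_{j \ge J} \frac{k^j S^{(k)}_j}{(k+1)^j S^{(k+1)}_j} \xrightarrow{J\to\infty} 0
\]
uniformly in $m$, while the finitely many coordinates $j < J$ converge by the diagonal extraction. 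Hence the subsequence is Cauchy, and therefore convergent, in $\Lambda^{k+1}$, which establishes compactness of the inclusion. The "in particular" clause is then immediate: an inductive limit of Banach spaces with compact (hence injective and, being compact into the next step, strictly intertwining) linking maps is a (DFS)-space by \cite[25.20]{MeiseVogt97}.

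The main obstacle I anticipate is purely bookkeeping around the weight estimate: one must confirm that $k^j S^{(k)}_j / \big((k+1)^j S^{(k+1)}_j\big) \to 0$ rather than merely staying bounded, since compactness (as opposed to mere continuity) hinges on genuine decay. This reduces to the elementary inequality $S^{(k)}_j \le S^{(k+1)}_j$ together with $(k/(k+1))^j \to 0$; no subtlety beyond that is expected, and the convexity properties of $\vh_\si^*$ recalled in \Cref{sec:weights} guarantee the monotonicity $S^{(k)} \le S^{(k+1)}$ needed here.
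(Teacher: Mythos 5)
Your proposal is correct and follows essentially the same route as the paper: a diagonal extraction giving coordinatewise convergence, combined with the geometric decay of the weight quotient $k^jS^{(k)}_j/\bigl((k+1)^jS^{(k+1)}_j\bigr)\le (k/(k+1))^j\to 0$ (using $S^{(k)}\le S^{(k+1)}$) to get uniform smallness of the tails and hence convergence in $\Lambda^{k+1}$. The alternative factorization through an intermediate space that you sketch at the outset is unnecessary, as you yourself note, and the paper does not use it.
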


\begin{proof}
Let $(a^{(n)})_n$ be a bounded sequence in $\La^{k}$. Then $\big\{b^{(n)}_j:= \frac{a^{(n)}_j}{k^j S^{(k)}_j} : n,j \in \N\big\}$ is bounded.
Thus the sequence $(b^{(n)}_1)$ is bounded and after passing to a subsequence we may assume that it is convergent.
Passing to a subsequence again we may assume that $(b^{(n)}_2)$ converges.
Iterating this procedure and taking the diagonal sequence,
we end up with a subsequence $b^{(n_j)}$ such that $b^{(n_j)}_i \to c_i \in \C$ for all $i \in \N$.
To finish the proof we show that
$a^{(n_j)} \rightarrow c$ in $\La^{k+1}$, where $c =(c_i)$.
It is clear that for fixed $\ve>0$ there exists $i_0$ such that for all $i \ge i_0$ and all $j$, we have
\[
\frac{|a_i^{(n_j)}|}{(k+1)^iS^{(k+1)}_i} = |b_i^{(n_j)}| \frac{k^i S^{(k)}_i}{(k+1)^iS^{(k+1)}_i}  \le \ve
\quad \text{ and }\quad \frac{|c_i|}{(k+1)^iS^{(k+1)}_i}  \le \ve.
\]
In addition there exists $j_0$ such that  $|a^{(n_j)}_i - c_i| \le \ve$ for $j \ge j_0$ and $i \le i_0$.
This yields that $a^{(n_j)} \rightarrow c$ in $\La^{k+1}$.
\end{proof}

\begin{lemma} \label{lem:A3}
  Let $\si$ be a pre-weight function.
  Then
  \[
  (\La^{\{\fM_\si\}})' \cong \{f \in \cH(\C): \A n \in \N \E A >0 \A z \in \C:~ |f(z)| \le A e^{\om_{S^{(n)}} (\frac{|z|}n)}  \} =: \cA^0
  \]
  The isomorphism is explicitly given by
  \[
  T \mapsto \Ph(T):= \Big(z \mapsto \sum_{j}T(e_j)z^j\Big),
  \]
  where $e_j$ denotes the $j$-th unit vector.
\end{lemma}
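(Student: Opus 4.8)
The plan is to identify a functional $T$ with its sequence of values $f_j := T(e_j)$ on the unit vectors and to match the resulting summability condition with the growth condition defining $\cA^0$. First I would check that the $e_j$ form a Schauder basis of $\La^{\{\fM_\si\}}$: for $a \in \La^k$ the truncation $\sum_{j \le N} a_j e_j$ converges to $a$ in $\La^{k+1}$, because the tail is bounded by $\|a\|_{\La^k}\sup_{j>N}(k/(k+1))^j$, where I use $S^{(k)} \le S^{(k+1)}$ (nestedness of the weight matrix); this is exactly the content underlying the compactness in \Cref{lem:dfs}. Consequently, continuity of $T$ on $\La^{k+1}$ forces $T(a) = \sum_j a_j f_j$ for every $a \in \La^k$. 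Testing against the truncations of $a_j = \ve_j\, k^j S^{(k)}_j$ with unimodular $\ve_j$ chosen so that $\ve_j f_j = |f_j|$, one sees that $T$ is bounded on $\La^k$ if and only if $\sum_j k^j S^{(k)}_j |f_j| < \infty$. Hence $T$ is continuous on the inductive limit precisely when $\sum_j k^j S^{(k)}_j |f_j| < \infty$ for every $k \in \N_{\ge 1}$; this is the sequence-side description of the dual. (We may assume $\si$ normalized, so that $S^{(k)}_0 = 1$.)

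The heart of the matter is to show that this family of summability conditions is equivalent to $f := \sum_j f_j z^j \in \cA^0$, and here the nestedness $S^{(n)} \le S^{(m)}$ for $n \le m$ is decisive. For the forward direction I would use the elementary bound $\sum_j t^j/S_j \le 2\,e^{\om_S(2t)}$ for log-convex $S$ (split off a geometric series) together with $|f_j| \le B_k/(k^j S^{(k)}_j)$, where $B_k := \sum_j k^j S^{(k)}_j|f_j|$. This gives $|f(z)| \le 2 B_m\, e^{\om_{S^{(m)}}(2|z|/m)}$ for every $m$; in particular $f$ is entire. Given $n$, I take $m = 2n$; since then $(2n/m)^j S^{(n)}_j = S^{(n)}_j \le S^{(m)}_j$, termwise comparison gives $\om_{S^{(m)}}(2t/m) \le \om_{S^{(n)}}(t/n)$, whence $|f(z)| \le 2 B_{2n}\, e^{\om_{S^{(n)}}(|z|/n)}$ and $f \in \cA^0$.

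For the converse I would run Cauchy's estimate on $|f(z)| \le A\,e^{\om_{S^{(n)}}(|z|/n)}$: optimizing over the radius and invoking $\sup_{t>0} t^j/e^{\om_S(t)} = S_j$ for log-convex $S$ (i.e.\ $\ul S = S$, cf.\ \eqref{eq:minorant}) yields $|f_j| \le A/(n^j S^{(n)}_j)$. For fixed $k$ I then pick any $n > k$, so that $\sum_j k^j S^{(k)}_j |f_j| \le A \sum_j (k/n)^j (S^{(k)}_j/S^{(n)}_j) \le A\sum_j (k/n)^j < \infty$, again by nestedness. Assembling the two directions, $\Ph$ maps $(\La^{\{\fM_\si\}})'$ into $\cA^0$ and is injective (a functional vanishing on every $e_j$ vanishes); it is surjective because for $f \in \cA^0$ with Taylor coefficients $f_j$ the converse step gives $\sum_j k^j S^{(k)}_j|f_j| < \infty$ for all $k$, so $T(a):=\sum_j a_j f_j$ is a well-defined continuous functional with $\Ph(T)=f$. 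The explicit index shifts ($k \leftrightarrow k+1$ and $n \leftrightarrow 2n$) together with the constants show that the two seminorm systems dominate each other up to a shift, so $\Ph$ is in fact a topological isomorphism of Fréchet spaces.

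I expect the main obstacle to be the constant absorption in the forward growth estimate: the geometric-series bound unavoidably produces $\om_{S^{(m)}}(2t/m)$ rather than $\om_{S^{(m)}}(t/m)$, and the factor $2$ can be removed only by shifting the matrix index, which is exactly where one needs $\fM_\si$ to be a genuine nested weight matrix. A secondary delicate point is the legitimacy of the representation $T(a) = \sum_j a_j f_j$, which rests on the basis expansion converging in the larger step $\La^{k+1}$ rather than in $\La^k$ itself---precisely the role of the compactness established in \Cref{lem:dfs}.
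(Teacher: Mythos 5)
Your proposal is correct and follows essentially the same route as the paper: identify $T$ with the coefficient sequence $(T(e_j))$, absorb the geometric-series factor $2$ in the forward growth estimate by the index shift $n\mapsto 2n$ using nestedness of $\fM_\si$, and recover the coefficient bounds from Cauchy's estimates via $\sup_{t>0}t^j e^{-\om_{S}(t)}=S_j$. The only cosmetic difference is that you phrase the dual-side topology through the $\ell^1$-type seminorms $\sum_j k^jS^{(k)}_j|T(e_j)|$ (with a Schauder-basis argument for the representation $T(a)=\sum_j a_jf_j$), whereas the paper works directly with the polars $(nB_n)^\circ$; both reduce to the (DFS)-property of \Cref{lem:dfs}, which in your write-up is really needed to know that the $B_k$ form a fundamental system of bounded sets, not just for the basis expansion.
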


\begin{proof}
  As a consequence of the compactness of the connecting mappings (\Cref{lem:dfs})
  the collection $\{B_n\}_n$ of closed unit balls $B_n \subseteq \La^n$
  forms a fundamental system of bounded sets in $\La^{\{\fM_\si\}}$(cf.\  \cite[25.19]{MeiseVogt97}).
  So a set $B$ is bounded in $\La^{\{\fM_\si\}}$ if and only if there exist $n \in \N$ and $\la > 0$ such that $B \subseteq \la B_n$.
  Therefore a 0-neighborhood base in the (strong) dual is given by the collection of the polars $(nB_n)^\circ$.
  Let $T \in (2nB_{2n})^\circ$. Then,
  \[
  |T(e_j)| \le \frac{1}{2n} \frac{1}{(2n)^j S^{(2n)}_j}
  \]
  and hence
  \begin{align*}
     |\Ph(T)(z)| \le \sum_{j}|T(e_j)| |z|^j &\le \frac{1}{2n} \sum_{j} \frac{|z|^j}{(2n)^j S^{(2n)}_j}
     \\
     &\le \frac{1}{2n} \sup_{k \in \N} \frac{|z|^k}{n^k S^{(n)}_k} \sum_{j\ge 0} 2^{-j} = \frac{1}{n} e^{\om_{S^{(n)}}(\frac{|z|}n)}.
  \end{align*}
  Thus $\Ph((2nB_{2n})^\circ)$ is contained in the ball of radius $\frac{1}{n}$ with respect to the weight $e^{\om_{S^{(n)}}(\frac{|z|}n)}$, whence $\Ph$ is continuous.

  Conversely, let $f(z) = \sum_j c_j z^j \in \cA^0$ be such that
  \[
  |f(z)| \le \frac{1}{2n} e^{\om_{S^{(2n)}} (\frac{|z|}{2n})}.
  \]
  Then, by the Cauchy estimates,
  \begin{align*}
      |c_j| \le \frac{1}{2n} \inf_{r>0} \frac{e^{\om_{S^{(2n)}} (\frac{r}{2n})} }{r^j}= \frac{1}{2n} \Big(\sup_{r>0} \frac{r^j}{e^{\om_{S^{(2n)}} (\frac{r}{2n})} } \Big)^{-1}
      = \frac{1}{(2n)^{j+1} S^{(2n)}_j}.
  \end{align*}
  Therefore, if $a \in nB_n$ then
  \[
  |\Ph^{-1}(f)(a)| \le \sum_{j\ge 0}\frac{n^{j+1} S^{(n)}_j}{(2n)^{j+1}S^{(2n)}_j}  \le 1,
  \]
  and so $\Ph^{-1}(f) \in (nB_n)^\circ$, which shows continuity of $\Ph^{-1}$.
\end{proof}

Now we are ready to show that \eqref{eq:A1} implies $\si \preceq \ka_\om$. If $\si \not\preceq \ka_\om$, then in view of \eqref{lem:BMT}
we can find a sequence of positive real numbers $a_j \to \infty$ such that
\begin{gather}
\label{eq:1}
  6j\si(a_j) \le P_\om(i a_j),\\
\label{eq:2}
  \log(1+|z|^2) \le \frac{1}{j} \si(|z|) \quad \text{for } |z|\ge a_j.
\end{gather}
By the proof of Proposition 2.4 in \cite{BonetMeiseTaylor89}, there exists of a sequence of polynomials $h_j$ and constants $C$ and $m$ such that
\[
|h_j(z)| \le C(1+a_j^2)^2e^{\frac{m}{j} P_\om (z)}, \quad z \in \C,
\quad\text{ and }\quad
h_j(i a_j) = e^{\frac{1}{j}P_\om(i a_j)}.
\]
Now set
\[
f_j(z):= e^{(\si(a_j) - \frac{1}{j} P_\om(i a_j))}h_j(z).
\]
Using \eqref{eq:2} and \eqref{eq:1},  we get
\[
|f_j(z)| \le C e^{(1+\frac{2}{j})\si(a_j) - \frac{1}{j} P_\om(ia_j) } e^{\frac{m}{j}P_\om(z)} \le C e^{-\frac{1}{2j}P_\om(ia_j)}e^{\frac{m}{j}P_\om(z)}.
\]
Applying the estimate
$P_\om(x+iy)\le |y|+A(\om(x)+1)$ (see \cite[Lemma 2.2]{BMT90}),
where $A$ is some absolute constant, gives
\[
|f_j(z)| \le C e^{Am} e^{-\frac{1}{2j}P_\om(i a_j)} e^{m|y| + \frac{Am}{j}\om(|z|)}.
\]
It follows that a subsequence of
$(f_j)$ tends to $0$ in $(\cE^{\{\om\}}(\R))'$ by means of the identification in \cite[Remark 1.4]{BonetMeiseTaylor92}.
On the other hand,
\[
f_j(ia_j)=e^{\si(a_j)} \ge e^{\omega_{S^{(n)}}(\frac{a_j}n)},
\]
since $\sigma(t) \ge \omega_{S^{(1)}}(t) \ge  \omega_{S^{(n)}}(t) \ge \omega_{S^{(n)}}(t/n)$.
Thus, no subsequence of $(f_j)$ converges to $0$ in $(\La^{\{\fM_\si\}})' \cong \cA^0$; see \Cref{lem:A3}.
Now \cite[Corollary 2.2]{BonetMeiseTaylor92} gives that $\Lambda^{\{\fM_\si\}}$ is not contained in $j^\infty \cE^{\{\om\}}(\R)$ (for this we need the (DFS)-property, i.e., Lemma \ref{lem:dfs}).
So the assumption $\si \not\preceq \ka_\om$ contradicts \eqref{eq:A1} and the proof of \Cref{lem:A1} is complete.

\begin{remark}
  The Beurling version of \Cref{lem:A1} is valid, too.
  We omit the proof.
\end{remark}


\def\cprime{$'$}
\providecommand{\bysame}{\leavevmode\hbox to3em{\hrulefill}\thinspace}
\providecommand{\MR}{\relax\ifhmode\unskip\space\fi MR }
\providecommand{\MRhref}[2]{%
  \href{http://www.ams.org/mathscinet-getitem?mr=#1}{#2}
}
\providecommand{\href}[2]{#2}

\end{document}